\documentclass[a4paper,10pt]{amsart}
\usepackage[colorlinks,linkcolor=blue,citecolor=blue]{hyperref}
\usepackage{latexsym, amssymb,amsmath,amsthm}
\usepackage{mathrsfs}
\usepackage{bbm}

\usepackage[all, knot]{xy}
\xyoption{arc}
\setlength{\textwidth}{5.6truein}
\setlength{\textheight}{8.2truein}
\setlength{\topmargin}{-0.13truein}
\setlength{\parindent}{0pt}
\addtolength{\parskip}{7pt}

\def \To{\longrightarrow}
\def \dim{\operatorname{dim}}

\def \rad{\operatorname{rad}}

\def \Vec{\operatorname{Vec}}

\def \mod{\operatorname{mod}}

\def \Z{\mathbb{Z}}

\def \k{\mathbbm{k}}
\def \1{\mathbf{1}}

\def\A{\mathcal{A}}
\def\g{\mathfrak{g}}

\numberwithin{equation}{section}

\newtheorem{theorem}{Theorem}[section]
\newtheorem{lemma}[theorem]{Lemma}
\newtheorem{proposition}[theorem]{Proposition}
\newtheorem{corollary}[theorem]{Corollary}
\newtheorem{definition}[theorem]{Definition}
\newtheorem{example}[theorem]{Example}
\newtheorem{remark}[theorem]{Remark}

\begin{document}

\title{Finite-dimensional quasi-Hopf algebras of Cartan type}

\subjclass[2010]{19A22, 18D10, 16G20}

\keywords{quasi-Hopf algebras, small quantum groups, Cartan matrices}

\author{Yuping Yang}
\address{Yang: School of Mathematics and statistics, Southwest University, Chongqing 400715, China}
\email{yupingyang@swu.edu.cn}

\author{Yinhuo zhang}
\address{Zhang: Department of Mathematics $\&$ Statistics, University of Hasselt, Universitaire Campus, 3590 Diepeenbeek, Belgium}
\email{yinhuo.zhang@uhasselt.be}

\date{}
\maketitle

\begin{abstract}
In this paper, we present a general method for constructing finite-dimensional quasi-Hopf algebras from finite abelian groups and braided vector spaces of Cartan type. The study of such quasi-Hopf algebras leads to the classification of finite-dimensional radically graded basic quasi-Hopf algebras over abelian groups  with dimensions  not divisible by $2,3,5,7$ and  associators  given by abelian $3$-cocycles. As special cases , the small quasi-quantum groups are introduced and studied. Many new explicit examples of finite-dimensional genuine quasi-Hopf algebras are obtained.
\end{abstract}

\section{introduction}
Quasi-Hopf algebras are generalizations of Hopf algebras, and are  fundamental in the study of finite integral tensor categories \cite{EGNO}. Recall that a tensor category is called integral if the Frobinus-Perron dimension of each object is an integer.  According to \cite{EO}, any finite integral tensor category over an algebraically closed filed is equivalent to the representation category of some finite-dimensional quasi-Hopf algebra. Pointed tensor categories are special examples of integral tensor categories, and the corresponding quasi-Hopf algebras are called basic quasi-Hopf algebras.

In the past and half decades, the classification of finite-dimensional basic quasi-Hopf algebras have attracted lots of attention. Since the dual of a finite-dimensional pointed Hopf algebra is a basic Hopf algebra,  the duals of the finite dimensional pointed Hopf algebras  over abelian groups classified in\cite{AS,AS2,H4,AG} provide a big family of finite-dimensional basic quasi-Hopf algebras. Since our ultimate goal is to classify the tensor categories, we are only interested in those quasi-Hopf algebras whose representation categories do not arise from  any Hopf algebra. Such quasi-Hopf algebras are said to be genuine.  In \cite{EG1,EG2,EG3}, Etingof and Gelaki gave a method for constructing basic genuine quasi-Hopf algebras from known basic Hopf algebras, and classified the finite-dimensional radically graded basic quasi-Hopf algebras over cyclic groups of prime order. In \cite{G}, Gelaki constructed the finite-dimensional basic quasi-Hopf algebras of dimension $N^3$ over cyclic groups of order $N$. Utilizing  the classification result of \cite{AS2}, Angiono \cite{A} classified the finite-dimensional radically graded basic quasi-Hopf algebras over cyclic groups with dimensions not divisible by small prime divisors. In \cite{HLYY,HY0,HY}, the quasi-commutative  finite-dimensional graded pointed Majid algebras of low ranks (dual basic quasi-Hopf algebras) are classified by the first author and his cooperators.  Although the aforementioned classification work covered a lot of new finite dimensional quasi-Hopf algebras,  the most important family of  finite dimensional pointed Hopf algebras of Cartan type is not yet covered by the above classifications of quasi-Hopf algebras.  In particular,  we have not found a natural quasi-version of the (generalized)  small quantum groups although a very close quasi-version of the  Frobenius-Lusztig kernel was constructed by means of  the quasi-quantum double in \cite{LOZ}.  This is because the classic construction of a small quantum group as a particular quotient of  the quantum double works not for the quasi-Hopf algebra case. So we have to look for an alternative way to define the notion of a small quasi-quantum group. The fact that the classical small quantum groups form a special class of the finite dimensional pointed Hopf algebras of finite Cartan type, see  \cite{L,L1,AS,AS2},  inspires us: if we could construct finite dimensional quasi-Hopf algebras from Cartan matrices, then the small quasi-quantum groups must be the special cases of those quasi-Hopf algebras of Cartan type.  This motivates us to study the finite-dimensional quasi-Hopf algebras of Cartan type. The main work of this paper are threefold.

First of all, we present a general method for constructing finite-dimensional quasi-Hopf algebras from finite Cartan matrices. Such a quasi-Hopf algebra is generated by an abelian group and a braided vector space of Cartan type. In more detail, let $G$ be a finite abelian group and $\mathbbm{G}$ a bigger abelian group uniquely determined by $G$, see \eqref{e3.4}. Let $u(\mathfrak{D},\lambda,\mu)$ (\cite{AS2}, or see Theorem \ref{T2.13}) be the generalized small quantum group generated by grouplike elements $\mathbbm{G}$ and skew-primitive elements $\{X_1,\cdots, X_n\}$.  We then  determine the $2$-cochains $J$ on $\mathbbm{G}$ such that the subalgebra of the twist quasi-Hopf algebra $u(\mathfrak{D},\lambda,\mu)^J$ generated by $G$ and $\{X_1,\cdots, X_n\}$ is a quasi-Hopf subalgebra, denoted  $u(\mathfrak{D},\lambda,\mu,\Phi_J)$, see Theorem \ref{T3.4}. Note that if $\lambda=0$ and $\mu=0$, then $u(\mathfrak{D},0, 0)$ is  a radically graded basic Hopf algebra. Moreover, when $G$ is a cyclic group, the quasi-Hopf algebra $u(\mathfrak{D},0,0,\Phi_J)$ is the same as those constructed in \cite{A,EG1,EG2,EG3}.  However, if  $G$ is not cyclic, or $u(\mathfrak{D},\lambda,\mu)$ is not radically graded and basic, then the construction and the study of $u(\mathfrak{D},\lambda,\mu,\Phi_J)$ are much more complicated. One of the difficulties is to compute suitable $2$-cochain $J$'s on $\mathbbm{G}$ such that  $u(\mathfrak{D},\lambda,\mu,\Phi_J)$ is a quasi-Hopf algebra.  Even if we can compute such a suitable cochain $J$,  we still have no standard method to  determine whether $u(\mathfrak{D},\lambda,\mu,\Phi_J)$ is genuine or not. While in the case of $\lambda=0, \mu=0$ and $G$ is a cyclic group, this problem is trivial.

Secondly, the obtained quasi-Hopf algebras of Cartan type deliver the classification of finite-dimensional radically graded basic quasi-Hopf algebras over abelian groups.  Let $H$ be a finite-dimensional basic quasi-Hopf algebra,  and $\rad(H)$ the Jacobson radical of $H$. Then we have $H/\rad(H)\cong [\k G]^*$, where $G$ is the Grothendieck group of the representation category of $H$.  We say that the basic quasi-Hopf algebra $H$ is over the group $G$. When $G$ is abelian, it is obvious that $H/\rad(H)\cong \k G$.   If  $H$ is a radically graded and  basic quasi-Hopf algebra over $G$, then the associator of $H$ is determined by a normalized $3$-cocycle on $G$, see \cite{A,HLYY,HY}. We show that a finite-dimensional radically graded and basic quasi-Hopf algebra $H$ over an abelian group $G$ with dimension not divisible by $2,3,5,7$, and the associator is given by an abelian $3$-cocycle of $G$, must be isomorphic to a quasi-Hopf algebra of Cartan type $u(\mathfrak{D},\lambda,\mu,\Phi_J)$, where $\lambda=0, \mu=0$, see Theorem \ref{classify}. Since each normalized $3$-cocycle of a cyclic group or an abelian group of the form $\Z_m\times \Z_n$  is abelian,  our classification extends  the corresponding classification results of \cite{A,HLYY} to more general cases.

Thirdly,  we introduce the quasi-version of  the small quantum groups, which form a class of finite dimensional quasi-Hopf algebras of Cartan type, namely,  those $H_{\underline{c}},$ where $\underline{c}$ is a family of parameters. When  $\underline{c}$ approaches $0$,   the small quasi-quantum group $H_{\underline{c}}$ will be the usual  small quantum group, see Proposition \ref{P5.3}.  As mentioned before, the small quasi-quantum group  defined in this paper is  substantially different from the one defined in \cite{LOZ}, where a small quasi-quantum group is defined as the quantum double of a quasi-Hopf algebra $A_q(\g)$ constructed in \cite{EG2}, where $\g$ is a simple Lie algebra.  Note that  the quantum double $D(A_q(\g))$ is a quasi-Hopf algebra of Cartan type as well.  Unlike the Hopf algebra case,  the small quasi-quantum group $H_{\underline{c}}$ is not a quotient of the double $D(A_q(\g))$ in general.  For example,  if  the order of $q$ is odd and not divisible by $3$ in case $\g$ is of type $G_2$, then the double  $D(A_q(\g))$ is not a genuine quasi-Hopf algebra, see \cite{EG4}. Under the same conditions for $q$, we can show that there are many genuine small quasi-quantum groups. This means that those small quasi-quantum groups can not be the quotients of $D(A_q(\g))$.

Beside the study of the small quasi-quantum groups, we will provide lots of other genuine quasi-Hopf algebras associated to  finite Cartan matrices in Section 6. As a matter of fact, our method will not only systematically produce  many nonsemisimple, nonradically graded genuine quasi-Hopf algebras, but also  yield  many new classes of  finite integral  and non-pointed tensor categories.

The paper is organized as follows. In Section 2, we introduce some concepts and known results about quasi-Hopf algebras, generalized small quantum groups and  $3$-cocycle of abelian groups. In Section 3, the quasi-Hopf algebras of Cartan type are constructed, and some low rank nonradically graded examples are provided. In Section 4, we  classify the
finite-dimensional radically graded quasi-Hopf algebras which are basic over abelian groups, and show that all the radically graded quasi-Hopf algebras of Cartan type are genuine. In Section 5, we introduce the small quasi-quantum groups, which are special nonradically graded quasi-Hopf algebras of Cartan type, and present explicitly  examples of   genuine quasi-Hopf algebras of Cartan type. Section 6  is devoted to new examples of nonradically graded genuine quasi-Hopf algebras associated to connected finite Cartan matrices.

Throughout this paper, $\k$ denotes an algebraically closed field of characteristic zero. All the algebras, tensor categories and the unadorned tensor product $\otimes$ are over $\k$.

\section{Preliminaries}
In this section, we introduce some notations and basic facts about Quais-Hopf algebras, tensor categories and some important results \cite{AS2} about pointed Hopf algebras.
\subsection{Quasi-Hopf algebras}
A qausi-bialgebra $H=(H,\bigtriangleup,\varepsilon, \Phi)$ is an unital associative algebra with two algebra maps $\bigtriangleup:H\to H\otimes H$(the comultiplication) and $\varepsilon:H\to \k$ (the counit), and an invertible
element $\Phi\in H^{\otimes 3}$ (the associator), subject to:
\begin{eqnarray*}
&(\varepsilon\otimes id)\bigtriangleup(h)=h=(id\otimes \varepsilon)\bigtriangleup(h),&\\
&(id\otimes \bigtriangleup)\bigtriangleup(h)=\Phi\cdot(\bigtriangleup\otimes id)\bigtriangleup(h)\cdot\Phi^{-1},&\\
&(id\otimes id\otimes \bigtriangleup)(\Phi)\cdot(\bigtriangleup\otimes id\otimes id)(\Phi)=
(1\otimes \Phi)\cdot(id\otimes \bigtriangleup\otimes id)(\Phi)\cdot(\Phi\otimes 1),&\\
&(id\otimes \varepsilon\otimes id)(\Phi)=1&
\end{eqnarray*}
for all $h\in H$. Write $\Phi=\Phi^1\otimes \Phi^2\otimes \Phi^3$ and $\Phi^{-1}=\overline{\Phi}^1\otimes \overline{\Phi}^2\otimes \overline{\Phi}^3.$
A quasi-Hopf algebra $H=(H,\bigtriangleup,\varepsilon, \Phi, S,\alpha,\beta)$ is a quasi-bialgebra $(H,\bigtriangleup,\varepsilon, \Phi)$ with an antipode $(S,\alpha,\beta),$ where $\alpha,\beta\in H$ and $S:H\to H$ is an angebra anti-homomorphism satisfying
\begin{eqnarray*}
&\sum S(a_1)\alpha a_2=\varepsilon(a)\alpha,\  \sum a_1\beta S(a_2)=\varepsilon(a)\beta,&\\
&\Phi^1\beta S(\Phi^2)\alpha\Phi^3=1,\ S(\overline{\Phi}^1)\alpha \overline{\Phi}^2 \beta S(\overline{\Phi}^3)=1&
\end{eqnarray*}
 for all $a\in H.$ Here we use Sweedler's notation $\bigtriangleup(a)=\sum a_1\otimes a_2.$
\begin{definition}\label{D2.1}
A twist for a quasi-Hopf algebra $H$ is an invertible element $J\in H\otimes H$ satisfying
$$(\varepsilon\otimes id)(J)=(id\otimes \varepsilon)(J)=1.$$
\end{definition}
Suppose that $J=\sum_if_i\otimes h_i$ is a twist of $H$ with inverse $J^{-1}=\sum_i\overline{f_i} \otimes \overline{h_i}.$  Write
\begin{equation}\label{2.1}
\alpha_J=\sum_iS(\overline{f_i})\alpha\overline{g_i}, \ \ \beta_J=\sum_if_i\beta S(g_i).
\end{equation}
According to \cite{Dr}, if $\beta_J$ is invertible then one can define a new quasi-Hopf algebra structure $H^J=(H,\bigtriangleup_J,\varepsilon,\Phi_J,S_J,\beta_J\alpha_J,1)$ on the algebra $H,$ where
\begin{eqnarray}
&&\bigtriangleup_J(h)=J\bigtriangleup(h)J^{-1},\ \ h\in H,\label{2.2}\\
&&\Phi_J=(1\otimes J)(id\otimes \bigtriangleup)(J)\Phi(\bigtriangleup\otimes id)(J^{-1})(J\otimes 1)^{-1}\label{2.3}\\
&&S_J(h)=\beta_JS(h){\beta_{J}}^{-1}, \ \ h\in H.\label{2.4}
\end{eqnarray}
Two quasi-Hopf algebras $H$ and $H'$ are said to be twist equivalent if $H'\cong H^J$ for some twist $J$ of $H.$
\begin{definition}\label{D2.2}
A quasi-Hopf algebra $H$ is genuine if $H$ is not twist (or gauge) equivalent to any Hopf algebra.
\end{definition}
The following theorem is useful in Section 5.
\begin{theorem}\cite[Theorem 2.2]{NS}\label{T2.3}
Let $H$ and $B$ be two finite-dimensional quasi-Hopf algebras. Then the two module categories $H$-mod and $B$-mod are tensor equivalent if and only if $H$ is equal to $B^J$ for some twist $J$ of $B.$
\end{theorem}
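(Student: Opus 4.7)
The plan is to treat the two implications separately.

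For the ``if'' direction, given $H = B^J$, we build a tensor equivalence $F: H\text{-mod} \to B\text{-mod}$ explicitly. Since $H$ and $B^J$ share the underlying algebra of $B$, we take $F$ to be the identity on objects and morphisms (an $H$-module is a $B$-module with the same action). Equip $F$ with the tensor constraint $F(M) \otimes F(N) \to F(M \otimes N)$ defined by the action of $J^{-1} \in B \otimes B$. The module compatibility of this constraint is exactly the content of \eqref{2.2}, $\bigtriangleup_J(b) = J \bigtriangleup(b) J^{-1}$; unitality follows from the counit condition in Definition \ref{D2.1}; and the pentagon axiom converts into \eqref{2.3}.

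For the converse, suppose $F: H\text{-mod} \to B\text{-mod}$ is a tensor equivalence. The approach is Tannakian reconstruction. Consider the composite fiber functor $\omega := U_B \circ F : H\text{-mod} \to \Vec$, where $U_B$ is the standard forgetful functor; this is another fiber functor on $H\text{-mod}$, alongside the canonical $U_H$. We first produce a plain (non-tensor) natural isomorphism $\eta: \omega \xrightarrow{\sim} U_H$. Both functors are exact, faithful, and $\k$-linear, and are representable by projective generators: $U_H$ by the regular module ${}_H H$, and $\omega$ by $P := F^{-1}({}_B B)$. Since $P$ is a projective generator of $H\text{-mod}$ of $\k$-dimension $\dim_\k B = \dim_\k H$, one can (after replacing $F$ by a naturally isomorphic tensor functor if necessary) fix an iso ${}_H H \xrightarrow{\sim} P$ in $H\text{-mod}$; an Eilenberg--Watts / Morita argument then produces $\eta$.

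Transporting the tensor structure of $\omega$ (coming from that of $F$) through $\eta$ endows $U_H$ with a second tensor structure on the same underlying functor. Any two tensor structures on $U_H$ differ by an invertible natural automorphism of the bifunctor $U_H(-) \otimes U_H(-)$; Yoneda, applied to the representing object ${}_H H \otimes {}_H H$, realizes this automorphism as multiplication by an invertible element $J \in H \otimes H$. The unit conditions of Definition \ref{D2.1} are forced by unitality of the tensor structure, while the pentagon compatibility between the two associators $\Phi_H$ and $\Phi_B$ (read through $\eta$) is exactly the formula \eqref{2.3}. Using the identification $\operatorname{End}(U_B) = B$ and tracking the quasi-Hopf structure through $\eta$ then yields $H = B^J$. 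The main obstacle is the construction of $\eta$: one must verify that the projective generators ${}_H H$ and $F^{-1}({}_B B)$ can be identified so that the reconstructed twist $J$ lies in $B \otimes B$ on the nose (rather than in some Morita-equivalent algebra $\operatorname{End}_\k(V)^{\otimes 2}$), which requires careful use of the quasi-Hopf structure and the naturality of the reconstruction across the equivalence $F$.
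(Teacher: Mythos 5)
The paper does not actually prove this statement; it quotes it verbatim from \cite[Theorem 2.2]{NS}, so there is no internal proof to compare against. Your overall strategy --- the explicit identity-on-objects equivalence for the ``if'' direction, and reconstruction from the two fiber functors $U_H$ and $U_B\circ F$ for the converse --- is precisely the strategy of the proof in the literature, and the ``if'' direction is essentially complete.

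There is, however, one genuine gap in the converse. You justify the existence of the plain natural isomorphism $\eta:\omega\to U_H$ by observing that the representing objects ${}_HH$ and $P=F^{-1}({}_BB)$ are projective generators of the same total $\k$-dimension. Equality of total dimensions does not force two projective generators to be isomorphic: if $H$ has projective indecomposables $P_1,P_2$, then any $P_1^{a}\oplus P_2^{b}$ with $a\dim P_1+b\dim P_2=\dim H$ is a projective generator of dimension $\dim H$, and the multiplicities $(a,b)$ need not match those in ${}_HH\cong\oplus_i P_i^{\dim S_i}$. Since representable functors are isomorphic only if their representing objects are, the argument stalls exactly at the step you flag as the main obstacle. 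The correct justification is that $F$, being a tensor equivalence of finite tensor categories, preserves Frobenius--Perron dimensions, and for modules over a finite-dimensional quasi-Hopf algebra the Frobenius--Perron dimension coincides with $\dim_\k$ (because $\dim_\k$ is a positive ring character on the Grothendieck ring); hence $F^{-1}$ carries the projective cover of a simple $T$ to the projective cover of the simple $F^{-1}(T)$ with $\dim_\k T=\dim_\k F^{-1}(T)$, and therefore $F^{-1}({}_BB)=F^{-1}(\oplus_T Q_T^{\dim T})\cong\oplus_S P_S^{\dim S}\cong{}_HH$. With that repaired the reconstruction of $J$ and of the algebra identification $B\cong\operatorname{End}(\omega)\cong\operatorname{End}(U_H)\cong H$ goes through. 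Two smaller points: with your chosen orientation $F:H\text{-mod}\to B\text{-mod}$ the tensor constraint should be multiplication by $J$ rather than $J^{-1}$ (check the intertwining identity $\xi\,\bigtriangleup_B(b)=\bigtriangleup_J(b)\,\xi$), and you never address the antipode, which must be matched via the standard uniqueness-of-antipode argument to conclude that $H\cong B^J$ as quasi-Hopf algebras rather than merely as quasi-bialgebras.
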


Let $H=(H,\bigtriangleup,\varepsilon, \Phi, S,\alpha,\beta)$ be a quasi-Hopf algebra. If $H=\oplus_{i\geq 0}H[i]$ is a graded algebra such that $(H[0],\varepsilon, \Phi, S,\alpha,\beta)$ is a quasi-Hopf subalgebra, and $I=\oplus_{i\geq 1}H[i]$ is the Jacobson radical of $H$ and $I^k=\oplus_{i\geq k}H[i]$ for each $k\geq 1,$ then we call $H$ a radically graded quasi-Hopf algebra.
Suppose that $H=(H,\bigtriangleup,\varepsilon, \Phi, S,\alpha,\beta)$ is a quasi-Hopf algebra, $I$ is the Jacobson radical of $H.$ If $I$ is a quasi-Hopf ideal of $H$, i.e., $\bigtriangleup(I)\subset H\otimes I+I\otimes H$, $S(I)=I$ and $\varepsilon(I)=0,$ then we can construct a radically graded quasi-Hopf algebra associated to $H.$ Let $H[0]=H/I$ and $\pi:H\to H[0]$ is the canonical projection. Define $H[k]=I^k/I^{k+1}$ for $k\geq 1.$ then the graded algebra $gr(H)=\oplus_{i\geq 0}H[i]$ has a natural quasi-Hopf algebra structure, with the associator $\pi\otimes \pi\otimes \pi(\Phi)$, and the antipode $(\pi\circ S,\pi(\alpha),\pi(\beta)).$ For radically graded quasi-Hopf algebras, we have the following useful lemma.

\begin{lemma}\cite[Lemma 2.1]{EG1}\label{L2.4}
Let $H=\oplus_{i\geq 0}H_{i}$ be a radically graded quasi-Hopf algebra.  Then $H$ is generated by $H[0]$ and $H[1].$
\end{lemma}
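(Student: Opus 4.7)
The plan is to mimic the classical Taft--Wilson-style induction on the grading degree. Let $A \subseteq H$ denote the subalgebra generated by $H[0]$ and $H[1]$. Since both generating subspaces are homogeneous, $A = \bigoplus_{n \geq 0} A[n]$ is a graded subalgebra with $A[0] = H[0]$, $A[1] = H[1]$, and $A[n] \subseteq H[n]$ for each $n$; I would show $A[n] = H[n]$ by induction on $n$, the cases $n = 0, 1$ being trivial.

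The key input is the compatibility between the coproduct and the radical filtration. Since $I := \bigoplus_{i \geq 1} H[i]$ is a quasi-Hopf ideal, $\Delta(I) \subseteq I \otimes H + H \otimes I$, and iterating gives $\Delta(I^n) \subseteq \sum_{a+b=n} I^a \otimes I^b$. Unpacking via $I^a = \bigoplus_{i \geq a} H[i]$ this yields
\[
\Delta(H[n]) \;\subseteq\; \bigoplus_{i+j \geq n} H[i] \otimes H[j].
\]
For $x \in H[n]$, write $\Delta(x) = \sum_{i+j \geq n} y_{i,j}$ with $y_{i,j} \in H[i] \otimes H[j]$. By the inductive hypothesis, every $y_{i,j}$ with $1 \leq i, j \leq n-1$ already lies in $A \otimes A$. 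Applying $\varepsilon \otimes \mathrm{id}$ and using that $\varepsilon$ vanishes on $H[i]$ for $i \geq 1$ isolates the corner term: $(\varepsilon \otimes \mathrm{id})\, y_{0,n} = x$. Using the semisimplicity of $H[0]$ to split $H[0] = \k\cdot 1 \oplus \ker(\varepsilon|_{H[0]})$, one extracts the $1 \otimes x$ piece of $y_{0,n}$ and then reads $x$ off the identity $\Delta(x) = \sum y_{i,j}$, modulo contributions already known to lie in $A$.

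The main obstacle I expect is handling the off-diagonal components $y_{0,j}$ and $y_{i,0}$ with $i,j > n$, which can appear because $\Delta(x)$ sits in $\bigoplus_{i+j \geq n}$ rather than on the strict diagonal $\bigoplus_{i+j=n}$. A conceptually cleaner route that avoids this bookkeeping is the quasi-Hopf analogue of the Radford biproduct: the projection $\pi \colon H \twoheadrightarrow H[0]$ splitting the inclusion $H[0] \hookrightarrow H$ is a quasi-Hopf algebra map, and one may realise $H$ as a smash product $R \# H[0]$, where $R := H^{\mathrm{co}\,\pi}$ is a connected graded braided (quasi-)Hopf algebra in a Yetter--Drinfeld category over $H[0]$ with $R[0] = \k$. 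The compatibility of the grading with the coradical filtration of $R$ then forces $R$ to be generated by its degree-one part $R[1] \subseteq H[1]$, whence $H = R\cdot H[0]$ is generated by $H[0]$ and $H[1]$. Either route reduces the lemma to the familiar principle that a connected coradically graded braided bialgebra is generated in degree one.
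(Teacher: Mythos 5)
The paper itself does not prove this lemma --- it is quoted verbatim from \cite[Lemma 2.1]{EG1} --- so there is no in-paper argument to compare with; judged on its own terms, your proposal has a genuine gap in both routes, and it also overlooks that the statement is far more elementary than either route suggests. In the paper's definition of ``radically graded'', the condition $I^{k}=\oplus_{i\geq k}H[i]$ for the Jacobson radical $I=\oplus_{i\geq 1}H[i]$ is part of the hypothesis. Since $I^{n}=I\cdot I^{n-1}$ by definition of powers of an ideal, we have $\oplus_{i\geq n}H[i]=\sum_{i\geq 1,\, j\geq n-1}H[i]\cdot H[j]$, and comparing degree-$n$ homogeneous components (the only way to write $n=i+j$ with $i\geq 1$ and $j\geq n-1$ is $i=1$, $j=n-1$) gives $H[n]=H[1]\cdot H[n-1]$ for every $n\geq 2$. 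Hence $H[n]=H[1]^{n}$ and $H$ is generated by $H[0]$ and $H[1]$. No coproduct, counit, associator, or antipode is needed: this is a statement about finite-dimensional graded algebras whose grading realizes the radical filtration, which is exactly why it transfers from Hopf to quasi-Hopf algebras for free.

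Concerning your two routes: the first is, as you concede, incomplete, and it cannot be closed as written. Knowing $\Delta(x)=1\otimes x+(\text{terms in }A\otimes A)+\cdots$ gives no mechanism for concluding $x\in A$; applying $\varepsilon\otimes\mathrm{id}$ only returns the tautology $x=x$, and there is no linear functional you can apply to the identity $\Delta(x)=\sum y_{i,j}$ that outputs $x$ as an element of $A$. (The Hopf-theoretic way to close such arguments is by duality: pick a functional on $H[n]$ vanishing on $A[n]$ and contradict the coradical filtration of $H^{*}$; you supply neither this nor a substitute.) The second route rests on a false principle: ``a connected coradically graded braided bialgebra is generated in degree one'' is not a familiar fact but essentially the Andruskiewitsch--Schneider generation problem, and it fails in general --- the quantum shuffle algebra $T^{c}(V)$ is connected and coradically graded, yet the subalgebra generated by its degree-one part is only the Nichols algebra $B(V)$, which is a proper subalgebra whenever the quantum symmetrizer is not bijective. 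What is true and easy is the \emph{radical}-filtration statement proved above; the coradical-filtration analogue you invoke is its hard (and generally false) dual. So even granting a quasi-Hopf bosonization $H\cong R\#H[0]$ --- itself nontrivial machinery over a base with nontrivial associator --- the reduction lands on a claim you cannot use.
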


\subsection{Datum of Cartan type, root system and Wyle group}
For a finite group $G,$ by $\widehat{G}$ we mean the character group of $G.$  We give the definition of a datum of Cartan type according to \cite{AS2}.
\begin{definition}\label{D2.4}
A datum of Cartan type
\begin{equation}
\mathfrak{D}=\mathfrak{D}(G,(h_i)_{1\leq i\leq \theta},(\chi_i)_{1\leq i\leq \theta},A)
\end{equation}
consists of an abelian group $G,$ elements $h_i\in G,$  characters $\chi_i\in \widehat{G},$ $1\leq i\leq \theta,$ and a generalized Cartan matrix $A=(a_{ij})_{1\leq i,j\leq \theta}$ of size $\theta$ satisfying
\begin{equation}
q_{ij}q_{ji}=q_{ii}^{a_{ij}},\ \mathrm{where}\ q_{ij}=\chi_j(h_i), \mathrm{for\ all}\ 1\leq i,j\leq \theta.
\end{equation} We call $\theta$ the rank of $\mathfrak{D}.$ A datum of Cartan type $\mathfrak{D}$ is called finite Cartan type if the associated Cartan matrix $A$ is finite; $\mathfrak{D}$ is said to be connected if $A$ is a connected Cartan matrix.
\end{definition}

 Fix a datum $\mathfrak{D}=\mathfrak{D}(G,(h_i)_{1\leq i\leq n},(\chi_i)_{1\leq i\leq \theta},A)$ of finite Cartan type. Let $\{\alpha_i|1\leq i\leq \theta\}$ be the set of free generators of $\Z^\theta$ and $s_i:\Z^\theta\to \Z^\theta$ the reflection $s_i(\alpha_j)=\alpha_j-a_{ij}\alpha_i$ for $1\leq i,j\leq \theta.$  The Weyl group $W$ of $A$ is generated by $\{s_i|1\leq i\leq \theta\}$ and the root system $R=\cup_{i=1}^\theta W(\alpha_i).$ Let $R^+$ be the set of positive roots with respect to the simple roots $\{\alpha_i|1\leq i\leq \theta\}.$ For each $\alpha=\sum_{i=1}^{\theta}k_i\alpha_i\in \Z^\theta,$ denote by $ht(\alpha)=\sum_{i=1}^\theta k_i,$
the height of $\alpha,$ and
\begin{eqnarray}
 &&h_{\alpha}=h_1^{k_1}h_2^{k_2}\cdots h_\theta^{k_\theta},\label{e2.7}\\
 &&\chi_\alpha=\chi_1^{k_1}\chi_2^{k_2}\cdots \chi_\theta^{k_\theta}.
\end{eqnarray} If $\alpha=\sum_{i=1}^{\theta}k_i\alpha_i\in R^+,$ it is obvious that $k_i\geq 0,\ 1\leq i\leq \theta,$ and $ht(\alpha)>0.$
For a datum of Cartan type $\mathfrak{D},$ we can define a Yetter-Drinfeld module $V(\mathfrak{D})$ in $_G^G \mathcal{YD}$ by
\begin{equation}
V(\mathfrak{D})=\sum_{i=1}^\theta V_{h_i}^{\chi_i}
\end{equation} where $V_{h_i}^{\chi_i}=\k\{X_i\}$ is the $1$-dimensional Yetter-Drinfeld module such that the module and the comodule structures are given by
\begin{equation}\label{2.10}
\delta(X_i)=X_i\otimes h_i,\ g\triangleright X_i=\chi_i(h)X_i
\end{equation} for all $g\in G.$ A basis $\{X_1,\cdots,X_\theta\}$ of Yetter-Drinfeld module $V(\mathfrak{D})$ satisfying \eqref{2.10} is called a canonical basis.
It is well-known that $_G^G \mathcal{YD}$ is a braided tensor category.  The natural braiding on  $V(\mathfrak{D})$ is  given by
\begin{equation}
c_{V,V}:V\otimes V\To V\otimes V,\  X_i\otimes X_j\to q_{ij}X_j\otimes X_i,\ 1\leq i,j\leq \theta.
\end{equation}

\subsection{Braided Hopf algebras}\label{s2.3}
Let $(V,c)$ be a braided vector space with a basis $\{X_1,\cdots,X_n\}$ such that
$$c(X_i\otimes X_j)=q_{ij}X_j\otimes X_i, q_{ij}\in \k, 1\leq i,j\leq n.$$
Then we call $(V,c)$ a braided vector space of diagonal type, $\{X_1,\cdots, X_n\}$ a canonical basis of $V,$ and $(q_{ij})_{1\leq i,j\leq n}$ the braiding constants of $V.$ Moreover, if $$q_{ij}q_{ji}=q_{ii}^{a_{ij}}=q_{jj}^{a_{ji}},1\leq i,j\leq n,$$ where $A=(a_{ij})_{1\leq i,j\leq n}$ is a Cartan matrix, then $(V,c)$ is called {\bf{braided vector space of Cartan type}}. For a datum of Cartan type $\mathfrak{D},$ it is obvious that $V(\mathfrak{D})$ is a braided vector space of Cartan type.

Note that the braiding matrix $(q_{ij})_{1\leq i,j\leq \theta}$ of $(V,c)$ defines a braided commutator on $T(V)$ as follows:
\begin{equation}
[X,Y]_c=XY-(\prod_{1\leq k\leq s,1\leq l\leq t}q_{i_kj_l}^{x_ky_l})YX,
\end{equation}
where $X=X_{i_1}^{x_1}X_{i_2}^{x_2}\cdots X_{i_s}^{x_s}$ and $Y=Y_{j_1}^{y_1}Y_{j_2}^{y_2}\cdots Y_{j_s}^{y_s}.$ The braided adjoint action of an element $X\in T(V)$ is defined by
\begin{equation}
ad_c(X)(Y)=[X,Y]_c
\end{equation} for any $Y\in T(V).$

In the rest of this subsection,  we let $\mathfrak{D}=\mathfrak{D}(G,(h_i)_{1\leq i\leq n},(\chi_i)_{1\leq i\leq \theta},A)$ be a connected datum of finite Cartan type. In addition, we assume for $1\leq i\leq \theta,$
\begin{eqnarray}
&&q_{ii}\ \mathrm{has\ odd\ order,}\label{2.7}\\
&&\mathrm{the\ order\ of }\ q_{ii}\ \mathrm{is\ prime\ to\ 3,}\ \mathrm{if}\ A\ \mathrm{is\ of\ type}\ G_2, \label{2.8}
\end{eqnarray}
where $q_{ij}=\chi_j(g_i)$ for $1\leq i,j\leq \theta.$ With these assumptions, we have the following:

\begin{lemma}\cite[Lemma 2.3]{AS2}\label{L2.5}
There exists a root of unit $q$ of odd order and integers $d_i\in \{1,2,3\}, 1\leq i\leq \theta,$ such that for $1\leq i,j\leq \theta,$
$$q_{ii} =q^{2d_i},\  d_ia_{ij}=d_ja_{ji}.$$ Moreover, if $A$ is of type $G_2$.  Then the order of $q$ is prime to $3.$
\end{lemma}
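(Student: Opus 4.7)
The plan is to reduce the statement to a classical symmetrization of the Cartan matrix together with a careful root-extraction argument that exploits the odd-order hypothesis. I would proceed in four steps.

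First, I would invoke the classical fact from Lie theory that any connected finite Cartan matrix $A=(a_{ij})$ is symmetrizable with a unique (up to scaling) tuple of positive integers $d_i$ satisfying $d_ia_{ij}=d_ja_{ji}$, and that we may normalize so that $d_i\in\{1,2,3\}$: all $d_i=1$ for types $A_n,D_n,E_6,E_7,E_8$; $d_i\in\{1,2\}$ for $B_n,C_n,F_4$; and $d_i\in\{1,3\}$ for $G_2$. This fixes the integers $d_i$ promised by the lemma.

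Second, I would construct $q$ by taking an appropriate root of $q_{11}$. Let $N$ denote the order of $q_{11}$, which is odd (and coprime to $3$ if the type is $G_2$). In every admissible case, $\gcd(2d_1,N)=1$: indeed $2d_1\in\{2,4,6\}$, and $N$ is odd, with the value $2d_1=6$ (forcing $d_1=3$, the $G_2$ case) additionally requiring $3\nmid N$, which is exactly the extra hypothesis \eqref{2.8}. Hence there is an inverse $m$ of $2d_1$ modulo $N$, and setting $q:=q_{11}^m$ gives an element of order $N$ satisfying $q^{2d_1}=q_{11}$. In particular $q$ has odd order, and its order is prime to $3$ in type $G_2$, which settles the last assertion.

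Third, I would propagate the identity $q_{jj}=q^{2d_j}$ along the Dynkin diagram. Since $A$ is connected, it suffices to show: if $q_{ii}=q^{2d_i}$ and $a_{ij}\ne 0$, then $q_{jj}=q^{2d_j}$. From the Cartan-type condition $q_{ij}q_{ji}=q_{ii}^{a_{ij}}=q_{jj}^{a_{ji}}$ one obtains
\begin{equation*}
q_{jj}^{a_{ji}}=q_{ii}^{a_{ij}}=q^{2d_ia_{ij}}=q^{2d_ja_{ji}}=(q^{2d_j})^{a_{ji}},
\end{equation*}
using the symmetrizability $d_ia_{ij}=d_ja_{ji}$. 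Hence the ratio $\eta:=q_{jj}/q^{2d_j}$ satisfies $\eta^{a_{ji}}=1$, where $a_{ji}\in\{-1,-2,-3\}$.

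Fourth, and this is the step where the hypotheses really bite, I would conclude $\eta=1$. If $a_{ji}=-1$ this is immediate. If $a_{ji}=-2$, then $\eta^2=1$; but both $q_{jj}$ and $q^{2d_j}$ have odd order, so $\eta$ has odd order, forcing $\eta=1$. If $a_{ji}=-3$, we are in type $G_2$, so both $q_{jj}$ and $q$ have order prime to $3$, hence $\eta$ has order prime to $3$ while $\eta^3=1$, forcing $\eta=1$. I expect this last step, the compatibility check across edges of multiplicity $2$ and $3$, to be the main technical obstacle: it is the place where the slightly mysterious odd-order and prime-to-$3$ hypotheses acquire their meaning, and it shows precisely why those hypotheses cannot be weakened.
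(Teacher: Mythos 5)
Your proof is correct. Note that the paper itself offers no proof of this lemma — it is quoted verbatim from \cite[Lemma 2.3]{AS2} — so there is no in-paper argument to compare against; your four steps (symmetrizing the connected Cartan matrix with $d_i\in\{1,2,3\}$, extracting $q$ as a $2d_1$-th root of $q_{11}$ using $\gcd(2d_1,N)=1$, propagating $q_{jj}=q^{2d_j}$ along edges of the connected Dynkin diagram via $q_{jj}^{a_{ji}}=q_{ii}^{a_{ij}}$, and killing the ambiguity $\eta$ with $\eta^{|a_{ji}|}=1$ against the odd-order and prime-to-$3$ hypotheses) constitute precisely the standard argument that the citation is pointing to, and you have correctly identified where hypotheses \eqref{2.7} and \eqref{2.8} are genuinely needed.
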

An immediate consequence of Lemma \ref{L2.5} is that the elements $q_{ii},\ 1\leq i\leq \theta,$ have the same order, hence we define
\begin{equation}\label{2.9}
N=|q_{ii}|,\  1\leq i\leq \theta.
\end{equation}

Let $V=V(\mathfrak{D})$ and $\{X_1,\cdots,X_\theta\}$ a canonical basis of $V$. Then the tensor algebra $T(V)$ is a braided Hopf algebra in $_G^G\mathcal{YD}$ with comultiplication determined by
$$\bigtriangleup(X_i)=X_i\otimes 1+1\otimes X_i,\ \ 1\leq i\leq \theta.$$
Since $(ad_cX_i)^{1-a_{ij}}(X_j),\ 1\leq i\neq j\leq \theta,$ are primitive elements in $T(V)$, they generate a braided Hopf idea of $T(V)$, denoted $I$. So we have a quotient braided Hopf algebra:
\begin{equation*}
\mathcal{R}(\mathfrak{D})=T(V)/I
\end{equation*} in $_G^G\mathcal{YD}.$
For convenience,  we still  denote by $X_i$,  $ 1\leq i\leq \theta$,  the image of the element $X_i$ in $\mathcal{R}(\mathfrak{D})$.

Now let $w_0=s_{i_1}s_{i_2}\cdots s_{i_P}$ be a fixed reduced presentation of the longest element of $W$ in terms of simple reflections. Then
\begin{equation}
\beta_l=s_{i_1}\cdots s_{i_{l-1}}(\alpha_{i_l})|1\leq l\leq P
\end{equation}
 is a convex order of positive roots. The root vectors $\{X_\alpha|\alpha\in R^+\}$ can be defined as  iterated braided commutators of the elements $X_1,\cdots ,X_\theta$ with respect to the braiding given by $(q_{ij})_{1\leq i,j\leq n}$ such that $X_{\alpha_i}=X_i, 1\leq i\leq \theta,$ see \cite{AS02,AS2,L1} for detailed definition. Denote by $\mathcal{K}(\mathfrak{D})$ the subalgebra of $\mathcal{R}(\mathfrak{D})$ generated by the elements $Y_l=X_{\beta_l}^N, 1\leq l\leq P.$ The following description of $\mathcal{K}(\mathfrak{D})$ comes from  \cite{AS2}.
\begin{theorem}\cite[Theorem 2.6]{AS2}
\begin{itemize}
\item[(1)] The elements $$X_{\beta_1}^{a_1}X_{\beta_2}^{a_2}\cdots X_{\beta_P}^{a_P},\ a_1,\cdots a_P\geq 0,$$ form a basis of $\mathcal{R}(\mathfrak{D}).$
\item[(2)] $\mathcal{K}(\mathfrak{D})$ is a braided Hopf subalgebra of $\mathcal{R}(\mathfrak{D}).$
\item[(2)] For all $\alpha,\beta\in R^+,$ $[X_\alpha, X_{\beta}^N]_C=0,$ that is,  $X_\alpha X_{\beta}^N-\chi_{\beta}^N(g_\alpha)X_{\beta}^NX_\alpha=0.$
\end{itemize}
\end{theorem}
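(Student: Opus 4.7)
The plan is to address the three parts in the order (1), (3), (2), since the commutation identity in (3) is the technical engine that powers both the PBW statement and the subalgebra property in (2). For the PBW basis of $\mathcal{R}(\mathfrak{D})$, I would appeal to Kharchenko's theorem on PBW bases for braided Hopf algebras of diagonal type, applied to $\mathcal{R}(\mathfrak{D})=T(V)/I$. Under the Cartan hypothesis and the quantum Serre relations generating $I$, the combinatorics of Kharchenko's hard super-letters matches that of Lusztig's quantum enveloping algebra: the super-letters biject with positive roots in $R^+$, and the height of each PBW generator equals $N$. I would then verify that the iterated braided commutators $X_{\beta_l}$ coming from the chosen reduced expression $w_0=s_{i_1}\cdots s_{i_P}$ agree up to nonzero scalars with Kharchenko's super-letters, which delivers the stated basis.

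For the commutation relation (3), I would first handle the rank-$2$ case. With $\alpha=\alpha_i$ and $\beta=\alpha_j$, I would prove $X_i X_j^N = \chi_j^N(h_i) X_j^N X_i$ by induction on $N$, using the braided Leibniz rule applied to $\operatorname{ad}_c(X_i)(X_j^N)$, the Serre relations $(\operatorname{ad}_c X_i)^{1-a_{ij}}(X_j)=0$, and the observation that the relevant $q$-integer coefficients in $q_{ii}$ vanish exactly when the exponent hits $N=|q_{ii}|$. For arbitrary $\alpha,\beta\in R^+$ I would then invoke Lusztig's isomorphisms: the subalgebra of $\mathcal{R}(\mathfrak{D})$ generated by two root vectors is isomorphic to a rank-$2$ Nichols algebra attached to the corresponding sub-root-system, which reduces every pair to one of the cases $A_1\times A_1$, $A_2$, $B_2$, $G_2$.

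With (3) in place, the subalgebra part of (2) is immediate: iterating $X_\alpha X_\beta^N = \chi_\beta^N(h_\alpha) X_\beta^N X_\alpha$ yields $X_\alpha^N X_\beta^N = \chi_\beta^N(h_\alpha)^N X_\beta^N X_\alpha^N$, so products of the $Y_l=X_{\beta_l}^N$ reduce to scalar multiples of ordered monomials in the $Y_l$. To promote $\mathcal{K}(\mathfrak{D})$ to a braided Hopf subalgebra I would show that each $Y_l$ is primitive in $\mathcal{R}(\mathfrak{D})$: for simple roots this follows from the braided quantum binomial expansion of $\Delta(X_i^N)$, where all intermediate $q$-binomial coefficients with parameter $q_{ii}$ vanish because $N=|q_{ii}|$; for non-simple $\beta_l$ one transports this computation along the braid group action on $\mathcal{R}(\mathfrak{D})$, which preserves primitivity of $N$-th powers. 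The principal obstacle I foresee is verifying the rank-$2$ base cases for all four Dynkin types and justifying that Lusztig's isomorphisms, originally stated for the quantum enveloping algebra, transfer to the Nichols algebra with its general abelian-group grading; in particular, the $G_2$ computation involves $q^{6}$ together with nontrivial $d_i$'s from Lemma \ref{L2.5}, and it is precisely there that the hypothesis $3\nmid |q|$ becomes indispensable.
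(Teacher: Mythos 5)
The paper does not prove this theorem at all: it is imported verbatim from \cite[Theorem 2.6]{AS2}, so there is no in-paper argument to compare yours against. Judged on its own terms, your sketch contains one outright error and one internal inconsistency. The error is in part (2): you claim each $Y_l=X_{\beta_l}^N$ is primitive and that primitivity is ``transported along the braid group action.'' This is false for non-simple roots. Already in type $A_2$ one has $\Delta(X_{12}^N)=X_{12}^N\otimes 1+1\otimes X_{12}^N+c\,X_1^N\otimes X_2^N$ with $c\neq 0$ in general, and the Lusztig braid operators are not coalgebra maps, so they cannot carry the primitivity of $X_i^N$ to $X_\beta^N$. This is precisely why the paper immediately follows the theorem with \cite[Lemma 2.8]{AS2}, whose whole content is the presence of the cross terms $\sum_{b,c\neq 0}t^a_{b,c}Y^b\otimes Y^c$; those scalars $t^a_{b,c}$ later feed into the definition of $u^a$ and $\mathbf{u}_\alpha(\mu)$, so they cannot be zero. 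What one actually proves for (2) is the weaker statement $\Delta(Y_l)-Y_l\otimes 1-1\otimes Y_l\in \mathcal{K}(\mathfrak{D})^+\otimes\mathcal{K}(\mathfrak{D})^+$, i.e.\ the cross terms are polynomials in the $Y$'s attached to roots of smaller height, which is enough for $\mathcal{K}(\mathfrak{D})$ to be a braided Hopf subalgebra. The inconsistency is in part (1): you assert that ``the height of each PBW generator equals $N$,'' which would bound the exponents by $0\leq a_i<N$ and contradict the statement you are proving; in $\mathcal{R}(\mathfrak{D})=T(V)/I$, where $I$ is generated only by the Serre relations and no root vector relations are imposed, the Kharchenko heights are infinite, and that is why the basis has unrestricted $a_i\geq 0$. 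Finite height $N$ is the statement for the Nichols algebra, not for $\mathcal{R}(\mathfrak{D})$.

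Your reduction of (3) to rank two is also under-justified: it is not automatic that the subalgebra of $\mathcal{R}(\mathfrak{D})$ generated by two root vectors $X_\alpha,X_\beta$ is a free-standing rank-two Nichols algebra of the sub-root-system they span, and the Levendorskii--Soibelman convexity relations that one would need to control $[X_\alpha,X_\beta]_c$ are themselves part of what has to be established. The route actually taken in \cite{AS2} (and the one consistent with how this paper uses Lemma \ref{L2.5}) is to twist the braiding matrix $(q_{ij})$ by a $2$-cocycle into Lusztig's symmetric form $(q^{d_ia_{ij}})$ --- this is exactly what the normalization $q_{ii}=q^{2d_i}$, $d_ia_{ij}=d_ja_{ji}$ is for --- and then to quote Lusztig's and De Concini--Kac's results on the positive part of the Frobenius--Lusztig kernel for all three parts at once; the twist rescales iterated braided commutators by nonzero scalars, so the PBW basis, the commutation rule for $N$-th powers, and the Hopf subalgebra property all transfer back. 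If you want a self-contained proof you would need to supply the convexity relations and the coproduct formula for root vectors explicitly rather than asserting primitivity.
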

Let $e_l=(\delta_{kl})_{1\leq k\leq P}\in \mathbb{N}^P,$ where $\delta_{kl}$ is the Kronecker sign. For each $a=(a_1,a_2,\cdots,a_P)\in \mathbb{N}^P,$ define
\begin{eqnarray}
&&Y^a=Y_1^{a_1}Y_2^{a_2}\cdots Y_p^{a_P},\\
&&h^a=h_{\beta_1}^{Na_1}h_{\beta_2}^{Na_2}\cdot h_{\beta_P}^{Na_P},\label{2.14}\\
&&\underline{a}=a_1\beta_1+a_2\beta_2+\cdots +a_P\beta_P.
\end{eqnarray}
Let $\bigtriangleup_{\mathcal{R}(\mathfrak{D})}$ be the comultiplication of $\mathcal{R}(\mathfrak{D}),$ then we have the following lemma.
\begin{lemma}\cite[Lemma 2.8]{AS2}
For any nonzero $a\in \mathbb{N}^P,$ there are uniquely determined scalars $t^a_{b,c}\in \k, 0\neq b,c\in \mathbb{N}^P,$ such that
\begin{equation}
\bigtriangleup_{\mathcal{R}(\mathfrak{D})}(Y^a)=Y^a\otimes 1+1\otimes Y^a+\sum_{b,c\neq 0,\underline{b}+\underline{c}=\underline{a}}t^a_{b,c}Y^b\otimes Y^c.
\end{equation}
\end{lemma}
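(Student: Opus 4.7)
The plan is to combine three ingredients already available from Theorem 2.6: (i) $\mathcal{K}(\mathfrak{D})$ is a braided sub-Hopf algebra of $\mathcal{R}(\mathfrak{D})$, so $\bigtriangleup_{\mathcal{R}(\mathfrak{D})}(Y^a)$ automatically lies in $\mathcal{K}(\mathfrak{D}) \otimes \mathcal{K}(\mathfrak{D})$; (ii) the monomials $\{Y^a : a \in \N^P\}$ form a basis of $\mathcal{K}(\mathfrak{D})$, immediate from the PBW basis of $\mathcal{R}(\mathfrak{D})$ together with $Y_l = X_{\beta_l}^N$; and (iii) the $\Z^\theta$-grading on $\mathcal{R}(\mathfrak{D})$ given by $\deg(X_i) = \alpha_i$. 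Uniqueness of the scalars $t^a_{b,c}$ then follows at once from the linear independence of the basis in (ii).

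For existence, I would first write $\bigtriangleup_{\mathcal{R}(\mathfrak{D})}(Y^a) = \sum_{b,c \in \N^P} t^a_{b,c}\, Y^b \otimes Y^c$ in this PBW basis. The defining quantum Serre relations $(ad_c X_i)^{1-a_{ij}}(X_j) = 0$ are $\Z^\theta$-homogeneous, so the grading descends from $T(V)$ to $\mathcal{R}(\mathfrak{D})$, and the braided coproduct respects it since each simple $X_i$ is primitive of pure degree $\alpha_i$. An induction on the length of the reduced expression for $\beta_l$, using that the braided commutator $[X, Y]_c = XY - (\textrm{scalar})\, YX$ is bi-homogeneous of degree $\deg X + \deg Y$, then shows $\deg X_{\beta_l} = \beta_l$, and hence $\deg Y^a = N\underline{a}$. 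Consequently every term $Y^b \otimes Y^c$ occurring with nonzero coefficient must satisfy $N\underline{b} + N\underline{c} = N\underline{a}$, equivalently $\underline{b} + \underline{c} = \underline{a}$.

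To isolate the two distinguished primitive terms, I would next apply the counit axioms $(\varepsilon \otimes id)\bigtriangleup(Y^a) = Y^a = (id \otimes \varepsilon)\bigtriangleup(Y^a)$. Since $\varepsilon(Y^b) = 0$ for every $b \neq 0$, this forces $t^a_{a,0} = t^a_{0,a} = 1$, while the grading argument from the previous paragraph kills every other boundary term with $b = 0$ or $c = 0$ (because $\underline{a} \neq 0$ whereas $\underline{0} = 0$). What remains is precisely the sum over pairs with both $b, c \neq 0$ and $\underline{b} + \underline{c} = \underline{a}$, which is the claimed formula.

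The main obstacle is the verification that each root vector $X_{\beta_l}$ is $\Z^\theta$-homogeneous of degree $\beta_l$. Since the $X_{\beta_l}$ are iterated braided commutators whose precise form depends on the chosen reduced expression of the longest element $w_0$, this is not an immediate observation; it is nevertheless a standard feature of the Lusztig-style root vectors and follows inductively from the bi-homogeneity of $[\cdot, \cdot]_c$. Once this is granted, the rest of the proof is essentially bookkeeping in the PBW basis.
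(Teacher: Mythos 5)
Your argument is correct and matches the standard proof: the paper itself offers no proof of this lemma (it is quoted from \cite[Lemma 2.8]{AS2}), and the argument there runs exactly along your lines --- the Hopf-subalgebra property of $\mathcal{K}(\mathfrak{D})$, the PBW basis $\{Y^a\}$, the $\Z^\theta$-grading with $\deg Y^a=N\underline{a}$, and the counit axioms. One cosmetic misattribution: since distinct $b\neq a$ with $\underline{b}=\underline{a}$ do occur (e.g.\ $e_1+e_3$ versus $e_2$ in type $A_2$, where $\beta_1+\beta_3=\beta_2$), the grading alone does not eliminate all boundary terms $Y^b\otimes 1$ with $b\neq a$; but the counit identity $(id\otimes \varepsilon)\bigtriangleup(Y^a)=Y^a$ that you already invoke forces $t^a_{b,0}=\delta_{b,a}$ for every $b$ (and likewise $t^a_{0,c}=\delta_{c,a}$), so the conclusion stands.
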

\begin{definition}
Let $(\mu_a)_{a\in \mathbb{N}^P}$ be a family of elements in $\k$ such that for all $a,$ $h^a=1$ implies $\mu_a=0.$ Then we can define $u^a\in \k G$ inductively on $ht(\underline{a})$ by
\begin{equation}\label{2.17}
u^a=\mu_a(1-h^a)+\sum_{b,c\neq 0,\underline{b}+\underline{c}=\underline{a}}t^a_{b,c}\mu_bu^c.
\end{equation}
\end{definition}

\begin{proposition}\label{P2.9}Let $(\mu_l)_{1\leq l\leq P}$ be a family of elements in $\k$ such that: $g_{\beta_l}^N=1$ or $\chi_{\beta_l}^N\neq \varepsilon$ implies $\mu_l=0.$ Then there exists a unique family $(\mu_a)_{a\in \mathbb{N}^P}$ satisfying $\mu_{e_l}=\mu_l$ for $1\leq l\leq P$ such that
\begin{equation}
u^a=u^{a-e_l}u^{e_l},\ \mathrm{if}\ a=(a_1,\cdots,a_l,0,\cdots,0),\ a_l\geq 1,\ 1\leq l\leq P,\ \mathrm{and}\ a\neq e_l.
\end{equation}
\end{proposition}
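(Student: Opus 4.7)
The plan is to construct the family $(\mu_a)_{a\in\mathbb{N}^P}$ by induction on the height $ht(\underline{a})$, using the prescribed identity $u^a=u^{a-e_l}u^{e_l}$ as the sole equation that fixes $\mu_a$. The base case is handed to us by hypothesis: set $\mu_{e_l}=\mu_l$ for $1\leq l\leq P$. For any $a\in\mathbb{N}^P$ with $a\neq 0$ and $a\notin\{e_1,\ldots,e_P\}$, a unique index $l$ is singled out as the largest coordinate with $a_l\geq 1$, so the proposition picks out exactly one relation to impose on $\mu_a$, namely $u^a=u^{a-e_l}u^{e_l}$; since $a-e_l$ has strictly smaller height than $a$, both $u^{a-e_l}$ and $u^{e_l}$ are available by induction.

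Rewriting the recursion \eqref{2.17} as an equation for $\mu_a$ and substituting the target value $u^a=u^{a-e_l}u^{e_l}$ on the right, I obtain
\[
\mu_a(1-h^a)\;=\;u^{a-e_l}u^{e_l}\;-\;\sum_{\substack{b,c\neq 0\\ \underline{b}+\underline{c}=\underline{a}}}t^a_{b,c}\mu_b u^c.
\]
When $h^a\neq 1$ the equation has a unique solution $\mu_a\in\k$, and adopting it as the definition automatically forces the required identity for $u^a$. When $h^a=1$ the standing hypothesis demands $\mu_a=0$, and the displayed equation becomes a nontrivial consistency condition: the right-hand side must vanish on its own.

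The main obstacle is precisely this consistency check in the case $h^a=1$. The strategy is to exploit the braided Hopf algebra structure of $\mathcal{K}(\mathfrak{D})\subset \mathcal{R}(\mathfrak{D})$: since $Y^a=Y^{a-e_l}\cdot Y_l$, multiplicativity of $\bigtriangleup_{\mathcal{R}(\mathfrak{D})}$ with respect to the braided tensor product gives
\[
\bigtriangleup_{\mathcal{R}(\mathfrak{D})}(Y^a)=\bigtriangleup_{\mathcal{R}(\mathfrak{D})}(Y^{a-e_l})\cdot\bigtriangleup_{\mathcal{R}(\mathfrak{D})}(Y_l).
\]
Comparing coefficients in the basis $\{Y^b\otimes Y^c\}$, and using the $N$-commutativity relations $X_\alpha X_\beta^N=\chi_\beta^N(h_\alpha)X_\beta^N X_\alpha$ of the preceding theorem to straighten braided products into the ordered monomials $Y^b$, produces a recursion among the scalars $t^a_{b,c}$ that relates them to the corresponding scalars for $a-e_l$ and $e_l$. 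Feeding this recursion into the right-hand side of the displayed equation and reorganizing the double sum by the inductive hypothesis reconstructs exactly the product $u^{a-e_l}u^{e_l}$, which is the desired vanishing.

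Uniqueness is transparent from the construction: at every step $\mu_a$ is either forced to $0$ by the hypothesis or determined by division by the nonzero scalar $1-h^a$, so the inductive definition leaves no free parameters beyond the initial data $(\mu_l)_{1\leq l\leq P}$.
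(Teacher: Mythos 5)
Your inductive skeleton --- induct on $ht(\underline{a})$, let the relation $u^a=u^{a-e_l}u^{e_l}$ for the largest nonzero coordinate $l$ pin down $\mu_a$, and read off uniqueness --- is the right one (the paper itself only cites \cite{AS2} here, so there is no in-house proof to compare against). The gap is at the central step. The equation you solve for $\mu_a$,
\[
\mu_a(1-h^a)\;=\;u^{a-e_l}u^{e_l}\;-\;\sum_{b,c\neq 0,\ \underline{b}+\underline{c}=\underline{a}}t^a_{b,c}\,\mu_b\, u^c,
\]
lives in the group algebra, and $1-h^a$ is a zero divisor there, not a nonzero scalar. So when $h^a\neq 1$ it is not true that this ``has a unique solution $\mu_a\in\k$'' for free: a scalar solution exists only if the right-hand side lies in the one-dimensional subspace $\k(1-h^a)$. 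That is a nontrivial consistency condition for \emph{every} $a$, not only in the case $h^a=1$ that you flag. Concretely, for $a=2e_1$ the right-hand side equals $\mu_1^2(1-h^{e_1})\bigl((1-t^{2e_1}_{e_1,e_1})-h^{e_1}\bigr)$, which lands in $\k(1-h^{2e_1})$ exactly because $t^{2e_1}_{e_1,e_1}=2$; for a generic value of that scalar it would not. So membership in $\k(1-h^a)$ genuinely uses the specific values of the $t^a_{b,c}$ produced by $\bigtriangleup_{\mathcal{R}(\mathfrak{D})}$, in all cases.

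The standard way to close this (and, in substance, the content of the results of \cite{AS2} that the paper invokes) is to prove, within the same induction, that the right-hand side $v$ above is $(h^a,1)$-skew-primitive in the group algebra, i.e.\ $\bigtriangleup(v)=v\otimes 1+h^a\otimes v$; since the $(h,1)$-skew-primitive elements of a group algebra are exactly $\k(1-h)$, and are $0$ when $h=1$, this settles existence in both cases at once. Establishing that skew-primitivity requires carrying along an explicit formula for $\bigtriangleup(u^c)-u^c\otimes 1-h^c\otimes u^c$ together with the coassociativity and multiplicativity constraints on the scalars $t^a_{b,c}$ --- which is the computation you gesture at in your third paragraph, but you deploy it only for $h^a=1$ and leave the decisive step at the level of ``reorganizing the double sum reconstructs the product''. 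Until that bookkeeping is written out and applied uniformly to all $a$, the argument is incomplete. (A smaller omission: you never use the hypothesis $\chi_{\beta_l}^N\neq\varepsilon\Rightarrow\mu_l=0$; it enters the same induction through the companion claim that $u^a=0$ whenever $\chi^a\neq\varepsilon$.)
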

\begin{proof}
It follows from \cite[Lemma 2.10, Theorem 2.13]{AS2}
\end{proof}
\begin{definition}\label{D2.10}
Suppose that $\mu=(\mu_l)_{1\leq l\leq P}$ is a family of elements in $\k$ satisfying the condition of Proposition \ref{P2.9}. For a root $\alpha\in R^+,$ then there exists $1\leq l\leq P$ such that $\alpha=\beta_l$ and define $$\mathbf{u}_\alpha(\mu)=u^{e_l}.$$
\end{definition}

\subsection{Andruskiewitsch-Schneider's Hopf algebras}
Fix a datum of finite Cartan type $\mathfrak{D}=\mathfrak{D}(G,(h_i)_{1\leq i\leq \theta},(\chi_i)_{1\leq i\leq \theta},A),$ where $A$ may not be a connected Cartan matrix. For $1\leq i,j\leq \theta$ we define $i\sim j$ if $i$ and $j$ are in the same connected component of the Dynkin diagram of Cartan matrix $A,$ and $i,j$ are said to be connected if $i\sim j.$ Let $\Omega=\{I_1,\cdots,I_t\}$ be the set of the connected components of $I=\{1,2,\cdots,\theta\}$.  Here we also assume that the conditions \eqref{2.7}-\eqref{2.8} hold for each connected component of $I.$ For $J\in \Omega,$ let$R_J$  be the root system of $A_J=(a_{ij})_{i,j\in J}$ and $N_J$ the corresponding number defined by \eqref{2.9}. Let $R_J^+$ be the set of positive roots of $A_J$ with respect to the simple roots $\{\alpha_i|i\in J\}.$   The following partitions are obvious:
$$R=\bigcup_{J\in\Omega} R_J,\  R^+=\bigcup_{J\in \Omega}R^+_J.$$

\begin{definition}
A family $\lambda=(\lambda_{ij})_{1\leq i,j\leq n,i\nsim j}$ of elements in $\k$ is called a family of linking parameters for $\mathfrak{D}$ if $h_ih_j=1$ or $\chi_i\chi_j\neq \varepsilon$ implies $\lambda_{ij}=0$ for all $1\leq i,j\leq \theta, i\nsim j.$ Vertices $1\leq i,j\leq \theta$ are called linkable if $i\nsim j, h_ih_j\neq 1$ and $\chi_i\chi_j=\varepsilon.$
\end{definition}

\begin{definition}
A family $\mu=(\mu_\alpha)_{\alpha\in R^+}$ of elements in $\k$ is called a family of root vector parameters for $\mathfrak{D}$ if $h_{\alpha}^{N_J}=1$ or $\chi_{\alpha}^{N_J}\neq \varepsilon$ implies $\mu_\alpha=0$ for all $\alpha\in R_J^+, J\in \Omega.$
\end{definition}
With these definitions and notations, we can give one of the main results of \cite{AS2}.
\begin{theorem}\cite[Theorem 4.5]{AS2}\label{T2.13}
Let $\mathfrak{D}=\mathfrak{D}(G,(h_i)_{1\leq i\leq \theta},(\chi_i)_{1\leq i\leq \theta},A)$ be a datum of finite Cartan type such that each connected component of $I=\{1,2,\cdots,\theta\}$ satisfies the conditions $\eqref{2.7}$-$\eqref{2.8}.$ Let $\lambda$ and $\mu$ be a family of linking parameters and a family of root vector parameters for $\mathfrak{D}$ respectively. Then we have a finite-dimensional pointed Hopf algebra $u(\mathfrak{D},\lambda,\mu)$ generated by the group $G$ and the skew-primitive elements $\{X_i|1\leq i\leq \theta\}$ subject to the following relations:
\begin{eqnarray*}
&(\mathrm{Action\ of\ the\ group})&gX_ig^{-1}=\chi_i(g)X_i,\ \mathrm{for\ all}\ 1\leq i\leq \theta, g\in G,\\
&(\mathrm{Serre\ relations})&ad_c(X_i)^{1-a_{ij}}(X_j)=0,\ \mathrm{for\ all}\ i\neq j, i\sim j,\\
&(\mathrm{Linking\ relations})&ad_c(X_i)(X_j)=\lambda_{ij}(1-h_ih_j),\ \mathrm{for\ all}\ i<j, i\nsim j,\\
&(\mathrm{Root\ vector\ relations})&X_\alpha^{N_J}=\mathbf{u}_{\alpha}(\mu), \ \mathrm{for\ all}\ \alpha\in R_J^+, J\in \Omega.
\end{eqnarray*}
The coalgebra structure is determined by
$$\bigtriangleup(X_i)=X_i\otimes 1+h_i\otimes X_i,\ \bigtriangleup(g)=g\otimes g, \ \mathrm{for\ all}\ 1\leq i\leq \theta, g\in G.$$
\end{theorem}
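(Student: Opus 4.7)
The plan is to realize $u(\mathfrak{D},\lambda,\mu)$ as a successive deformation of a bosonization, exhibiting the required ideal as a Hopf ideal. First I would introduce the candidate algebra $U$ by the presentation stated in the theorem, and equip it tentatively with the algebra maps $\Delta$, $\varepsilon$ on generators as specified (extending multiplicatively). The task then splits into three parts: (i) show $\Delta,\varepsilon$ descend to $U$, so that $U$ is a bialgebra; (ii) show $U$ is finite-dimensional of the expected size $|G|\,N^{|R^+|}$; and (iii) show $U$ is pointed with coradical $\k G$ and then produce the antipode.

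For step (i), I would start from the free object, namely the bosonization $T(V(\mathfrak{D}))\#\k G$, which is pointed with coradical $\k G$ and on which $\Delta,\varepsilon,S$ are already defined. I would then mod out the defining relations in layers, verifying each time that the generating element is a $(g,1)$-skew-primitive-like element so that the generated ideal is a Hopf ideal. The group action relations are automatic from the Yetter–Drinfeld structure \eqref{2.10}. The Serre relations $\operatorname{ad}_c(X_i)^{1-a_{ij}}(X_j)$ are primitive in $T(V(\mathfrak{D}))\#\k G$ by a standard calculation using the Jacobi-type identity for $\operatorname{ad}_c$, so one may pass to $\mathcal{R}(\mathfrak{D})\#\k G$. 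For each linking relation the element $\operatorname{ad}_c(X_i)(X_j)-\lambda_{ij}(1-h_ih_j)$ has coproduct
\begin{equation*}
\operatorname{ad}_c(X_i)(X_j)\otimes 1+h_ih_j\otimes \operatorname{ad}_c(X_i)(X_j),
\end{equation*}
and the hypothesis $\chi_i\chi_j=\varepsilon$ together with $h_ih_j\neq 1$ is precisely what makes $1-h_ih_j$ a $(h_ih_j,1)$-skew-primitive correction; so the linking relation again generates a Hopf ideal. The root vector relations are the most delicate: using Lemma 2.8 one writes $\Delta_{\mathcal{R}}(Y^a)=Y^a\otimes 1+1\otimes Y^a+\sum t^a_{b,c}Y^b\otimes Y^c$, and Proposition \ref{P2.9} shows that the inductive definition \eqref{2.17} of $u^a$ is exactly engineered so that $Y^a-u^a$ is a $(h^a,1)$-primitive element after bosonization. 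Hence the full relation ideal is a Hopf ideal and $U$ inherits a bialgebra structure.

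For step (ii), the PBW theorem (Theorem 2.6) produces a spanning set consisting of ordered monomials $X_{\beta_1}^{a_1}\cdots X_{\beta_P}^{a_P}g$ with $0\le a_l<N_J$ and $g\in G$, showing $\dim U\le |G|\,N^{|R^+|}$; the opposite inequality follows once one has a coalgebra surjection from $U$ onto a known Hopf algebra of that dimension, namely the bosonization $\mathfrak{u}(\mathfrak{D})\#\k G$ of the Frobenius–Lusztig-type quotient obtained by setting $\lambda=\mu=0$, and then lifts it along the standard graded-lifting argument using the associated graded with respect to the coradical filtration. For step (iii), since $\k G$ embeds into $U$ and $U_0:=\k G$ is cosemisimple, and the images of $X_i$ are $(h_i,1)$-skew-primitive, the coradical filtration is bounded above by the filtration by $X$-degree; hence $\operatorname{corad}(U)=\k G$, so $U$ is pointed. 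The antipode then exists automatically since $U$ is a finite-dimensional pointed bialgebra.

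The main obstacle will be verifying that the root vector relations form a Hopf ideal — the combinatorics of the scalars $t^a_{b,c}$ and the inductive construction of $u^a\in\k G$ have to line up perfectly so that $\Delta(Y^a-u^a)$ lies in the ideal generated by lower-height relations. This is exactly the content of Proposition \ref{P2.9}, whose proof in \cite{AS2} is the technical heart of the matter; the subtlety is that if $h^a=1$ one needs $u^a=0$ identically, which forces the compatibility conditions imposed on the family $\mu$. Once this Hopf-ideal verification is in hand, the remaining claims — finite dimension, pointedness, existence of the antipode — follow from the standard lifting package for Nichols algebras over abelian groups.
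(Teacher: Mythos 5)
This statement is quoted verbatim from \cite[Theorem 4.5]{AS2}; the paper supplies no proof of its own, so there is nothing internal to compare your argument against. Your outline is a faithful reconstruction of the strategy actually used in \cite{AS2}: descend from the bosonization $T(V(\mathfrak{D}))\#\k G$ by checking that each layer of relations generates a Hopf ideal, with the root-vector relations handled by the inductive compatibility of the scalars $t^a_{b,c}$ and the elements $u^a$ (the content of Proposition \ref{P2.9}, i.e.\ \cite[Lemma 2.10, Theorem 2.13]{AS2}), then get the dimension from the PBW basis and pointedness plus invertibility of group-likes for the antipode. Two small imprecisions worth fixing: the dimension in the non-connected case is $|G|\prod_{J\in\Omega}N_J^{|R_J^+|}$ rather than $|G|\,N^{|R^+|}$; and the displayed coproduct you write is that of $\operatorname{ad}_c(X_i)(X_j)$ alone (valid because $i\nsim j$ forces $q_{ij}q_{ji}=1$), so you should state explicitly that both this element and $1-h_ih_j$ are $(h_ih_j,1)$-skew-primitive, whence so is their difference. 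Neither point affects the soundness of the plan.
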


The Hopf algebras constructed in  Theorem \ref{T2.13} can be viewed as an axiomatic description of generalized the small quantum groups, and Lusztig's small quantum groups are special examples of such Hopf algebras. Another main result of \cite{AS2} says that  any finite-dimensional pointed Hopf algebra over an abelian group $G$ with dimension not divided by $2,3,5,7,$  is of the form $u(\mathfrak{D},\lambda,\mu)$ for some $\mathfrak{D},\lambda, \mu.$ In the sequel, we call such a  Hopf algebra $u(\mathfrak{D},\lambda,\mu)$ an Andruskiewitsch-Schneider Hopf algebra (or AS-Hopf algebra for short) following \cite{EG3}.

\subsection{Normalized $3$-cocycles on finite groups}

Let $G$ be an arbitrary abelian group. So $G\cong \mathbb{Z}_{m_{1}}\times\cdots \times\mathbb{Z}_{m_{n}}$ with $m_j\in \mathbb{N}$
for $1\leq j\leq n.$ A function $\phi:G\times G\times G\mapsto \k^*$ is called a 3-cocycle on $G$ if
\begin{equation}
\phi(ef,g,h)\phi(e,f,gh)=\phi(e,f,g)\phi(e,fg,h)\phi(f,g,h)
\end{equation}
for all $e,f,g,h \in G$. A 3-cocycle  is called normalized if $\phi(f,1,g)=1.$  Denote by $\mathcal{A}$ the set of all sequences
\begin{equation}\label{cs}(c_{1},\ldots,c_{l},\ldots,c_{n},c_{12},\ldots,c_{ij},\ldots,c_{n-1,n},c_{123},
\ldots,c_{rst},\ldots,c_{n-2,n-1,n})\end{equation}
such that $ 0\leq c_{l}<m_{l}, \ 0\leq c_{ij}<(m_{i},m_{j}), \ 0\leq c_{rst}<(m_{r},m_{s},m_{t})$ for $1\leq l\leq n, \ 1\leq i<j\leq n, \ 1\leq r<s<t\leq n$, where $c_{ij}$ and $c_{rst}$ are ordered in the lexicographic order. We denote by $\underline{\mathbf{c}}$ the sequence \eqref{cs}  in the following.

Let $g_i$ be a generator of $\mathbb{Z}_{m_{i}}, 1\leq i\leq n$. For any $\underline{\mathbf{c}}\in A$, define
\begin{eqnarray}
&& \omega_{\underline{\mathbf{c}}}:\;G\times G\times G\To \k^{\ast} \notag \\
&&[g_{1}^{i_{1}}\cdots g_{n}^{i_{n}},g_{1}^{j_{1}}\cdots g_{n}^{j_{n}},g_{1}^{k_{1}}\cdots g_{n}^{k_{n}}] \mapsto \\ &&\prod_{l=1}^{n}\zeta_{m_l}^{c_{l}i_{l}[\frac{j_{l}+k_{l}}{m_{l}}]}
\prod_{1\leq s<t\leq n}\zeta_{m_{t}}^{c_{st}i_{t}[\frac{j_{s}+k_{s}}{m_{s}}]}
\prod_{1\leq r<s<t\leq n}\zeta_{(m_{r},m_{s},m_{t})}^{c_{rst}k_{r}j_{s}i_{t}}. \notag
\end{eqnarray}
Here and below $\zeta_m$ stands for an $m$-th primitive root of unity.

\begin{proposition}\cite[Proposition 3.1]{bgrc2} \label{P2.14}  $\{\omega_{\underline{\mathbf{c}}}|\underline{\mathbf{c}}\in A\}$ forms a complete set of representatives of the normalized $3$-cocycles on $G$ up to $3$-cohomology.
\end{proposition}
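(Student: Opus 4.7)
The plan is to establish Proposition~\ref{P2.14} by computing $H^{3}(G,\k^{\ast})$ abstractly, then matching this computation against the explicit cocycles $\omega_{\underline{\mathbf{c}}}$. I would proceed in three stages: a Künneth-type cohomology computation, a direct verification that each $\omega_{\underline{\mathbf{c}}}$ is a normalized $3$-cocycle, and an argument that the map $\underline{\mathbf{c}}\mapsto [\omega_{\underline{\mathbf{c}}}]$ is a bijection onto $H^{3}(G,\k^{\ast})$.

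\textbf{Stage 1: abstract computation.} Since $\k$ is algebraically closed of characteristic zero, $\k^{\ast}$ is a divisible abelian group, so the standard periodic resolution of $\Z_m$ yields $H^{k}(\Z_m,\k^{\ast})=\Z_m$ for $k$ odd and $0$ for $k$ positive even, together with $H^{2}(\Z_m,\Z_{m'})\cong \Z_{(m,m')}$ by the universal coefficient theorem. Applying the Lyndon--Hochschild--Serre spectral sequence iteratively (it degenerates because each successive quotient acts trivially on the coefficients), or equivalently the topological Künneth theorem for $K(G,1)=\prod K(\Z_{m_l},1)$, the degree-three group decomposes as
\begin{equation*}
H^{3}(G,\k^{\ast})\;\cong\;\bigoplus_{l}\Z_{m_l}\;\oplus\;\bigoplus_{i<j}\Z_{(m_i,m_j)}\;\oplus\;\bigoplus_{r<s<t}\Z_{(m_r,m_s,m_t)}.
\end{equation*}
In particular the cardinality of this group matches $|\mathcal{A}|$, so that the index ranges defining $\mathcal{A}$ are forced.

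\textbf{Stage 2: cocycle verification.} I would split $\omega_{\underline{\mathbf{c}}}=\omega^{(1)}\omega^{(2)}\omega^{(3)}$ according to the three kinds of exponents and check the $3$-cocycle identity separately for each factor. The $c_l$-piece is the pullback along the projection $G\to \Z_{m_l}$ of the classical generator $\zeta_{m_l}^{i[(j+k)/m_l]}$ of $H^{3}(\Z_{m_l},\k^{\ast})$. The $c_{ij}$-piece arises as a cross product of a $1$-cochain on $\Z_{m_s}$ with a $2$-cochain on $\Z_{m_t}$, and the cocycle identity reduces to the integer identity $[(x+y)/m]+[z/m]-[(y+z)/m]-[x/m]\in\Z$ obtained from $x+y-[x/m]\cdot m$ type manipulations. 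The $c_{rst}$-piece is a triple cup product of three $H^{1}$-classes $k_r,j_s,i_t$, whose cocycle identity is the standard verification for shuffle products. Normalization $\omega_{\underline{\mathbf{c}}}(g,1,h)=1$ follows because the relevant exponents vanish when any middle argument is $1$.

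\textbf{Stage 3: injectivity.} By Stage 1 it is enough to show that $\underline{\mathbf{c}}\neq \underline{\mathbf{c}}'$ forces $[\omega_{\underline{\mathbf{c}}}]\neq[\omega_{\underline{\mathbf{c}}'}]$. I would proceed by successive restriction. Restricting $\omega_{\underline{\mathbf{c}}}$ to the cyclic subgroup $\langle g_l\rangle$ kills all cross terms and leaves only $\zeta_{m_l}^{c_l i_l[(j_l+k_l)/m_l]}$, which represents the class $c_l\in H^{3}(\Z_{m_l},\k^{\ast})\cong \Z_{m_l}$; hence the $c_l$ are cohomological invariants. After subtracting off those contributions, restrict to pairs $\langle g_i,g_j\rangle$ to isolate $c_{ij}\in \Z_{(m_i,m_j)}$ via the cup-product pairing with explicit $1$-classes on the factors; finally restrict to triples $\langle g_r,g_s,g_t\rangle$ to read off $c_{rst}$ via the triple cup pairing. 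The orders $(m_i,m_j)$ and $(m_r,m_s,m_t)$ arise precisely because the $\Z$-lift of the relevant $1$-cochains is only well defined modulo the gcd.

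\textbf{Main obstacle.} The cohomology count is routine and the cocycle identities reduce to floor-function book\-keeping; the subtle point is that the chosen explicit representatives form a \emph{triangular} system of parameters. One must verify that the particular normalization (use of $[\,\cdot\,]$, the ordered indices $i<j$ and $r<s<t$, and the specific roots of unity of orders $m_l$, $(m_i,m_j)$, $(m_r,m_s,m_t)$) aligns with a section of the Künneth isomorphism, so that no $c_{ij}$- or $c_{rst}$-piece is cohomologous to a combination of lower-type pieces. This alignment check, rather than any deep cohomological input, is where the real work of the proof lies.
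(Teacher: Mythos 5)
The paper does not actually prove this proposition: it is imported verbatim from \cite{bgrc2}, so there is no in-paper argument to compare against, and the only fair comparison is with the proof in that source. Your three-stage plan is sound in outline and arrives at the right statement. Stage 1 is correct ($\k^{*}$ is divisible, so $H^{n}(G,\k^{*})\cong H^{n+1}(G,\Z)$ for $n\geq 1$, and K\"unneth gives the stated decomposition of $H^{4}(G,\Z)$), and Stage 2 is routine once each factor of $\omega_{\underline{\mathbf{c}}}$ is recognized as a pullback or cup product of cocycles on the cyclic factors. The genuine methodological difference is that \cite{bgrc2} uses neither spectral sequences nor cup-product pairings: it constructs the explicit free resolution of $\Z$ over $\Z G$ as the tensor product of the periodic resolutions of the cyclic factors, writes down explicit comparison chain maps between this small complex and the normalized bar resolution, and transports the evident generators of the cohomology of the small complex to bar cocycles. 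The formulas for $\omega_{\underline{\mathbf{c}}}$, the ranges $0\leq c_{l}<m_{l}$, $0\leq c_{ij}<(m_{i},m_{j})$, $0\leq c_{rst}<(m_{r},m_{s},m_{t})$, and completeness all come out of that single computation, which is exactly what dissolves the ``alignment check'' you correctly flag as the main obstacle. In your version that obstacle remains the one open step: Stage 3 still requires an actual proof that $\omega_{\underline{\mathbf{c}}}$ with only $c_{ij}$ (resp.\ $c_{rst}$) nonzero is a coboundary precisely when $(m_{i},m_{j})\mid c_{ij}$ (resp.\ $(m_{r},m_{s},m_{t})\mid c_{rst}$); the reduction to coordinate subgroups of rank at most three is fine by naturality of K\"unneth, but the ``detection by explicit pairings'' must be made precise, e.g.\ by identifying your cochains with chain-level cup products and matching them against the $\Z_{(m_{i},m_{j})}$ and $\Z_{(m_{r},m_{s},m_{t})}$ summands. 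So: same destination, a more abstract route, and the step you defer is precisely the step the resolution-based proof exists to make automatic.
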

For the purpose of this paper, we need another class of representatives of the normalized $3$-cocycles on $G.$
Let $Z^3(G,\k^*)$ be the set of the normalized $3$-cocycles on $G$. Define a map
\begin{equation}
\sigma: Z^3(G,\k^*)\To Z^3(G,\k^*),\ \
\sigma(\phi)(f,g,h)=\phi(h,g,f),\ \mathrm{for\ all}\ f,g,h\in G.
\end{equation}
To see if the map is well-defined, we just need to show that $\sigma(\phi)$ is a normalized $3$-cocycle on $G$ for each $\phi\in Z^3(G,\k^*).$
Indeed,  for any $e,f,g,h\in G$ and $\phi\in Z^3(G,\k^*),$ we have
\begin{eqnarray*}
\partial(\sigma(\phi))(e,f,g,h)&=&\frac{\sigma(\phi)(e,f,g)\sigma(\phi)(e,fg,h)\sigma(\phi)(f,g,h)}{\sigma(\phi)(ef,g,h)\sigma(\phi)(e,f,gh)}\\
&=&\frac{\phi(g,f,e)\phi(h,fg,e)\phi(h,g,f)}{\phi( h,g,ef)\phi(gh,f,e)}\\
&=&\partial(\phi)(h,g,f,e)\\
&=&1.
\end{eqnarray*}
This implies that $\sigma(\phi)$ is a $3$-cocycle on $G$. The fact that  $\sigma(\phi)$ is normalized  follows from  the equation:
$$\sigma(\phi)(f,1,g)=\phi(g,1,f)=1,\ \mathrm{for\ all}\ f,g\in G.$$
It is obvious that $\sigma$ is bijective since $\sigma^2=id.$ Moreover,  we have the following.
\begin{lemma}\label{L2.15}
The map $\sigma$ induces an involution of $H^3(G,\k^*).$
\end{lemma}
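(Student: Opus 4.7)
The excerpt has already established two of the three facts needed: that $\sigma$ sends $Z^3(G,\k^*)$ to itself, and that $\sigma^2=\mathrm{id}$ on the level of cochains. In addition, it is immediate that $\sigma$ is a homomorphism of abelian groups, since reversing the order of the three arguments is multiplicative in $\phi$. To promote $\sigma$ to an involution of $H^3(G,\k^*)$ it therefore suffices to verify that $\sigma$ sends $3$-coboundaries to $3$-coboundaries; once that is done, $\sigma$ descends to an automorphism of $H^3$ whose square is the identity.

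My plan is to exhibit, for each normalized $2$-cochain $\psi\colon G\times G\to \k^*$, an explicit normalized $2$-cochain $\psi'$ whose coboundary equals $\sigma(\partial\psi)$. The natural candidate, guided by the argument-reversal defining $\sigma$, is
\[
\psi'(f,g) := \psi(g,f)^{-1}.
\]
This is visibly a normalized $2$-cochain. The heart of the proof is then the direct computation
\[
(\partial \psi')(e,f,g) = \psi'(f,g)\,\psi'(ef,g)^{-1}\,\psi'(e,fg)\,\psi'(e,f)^{-1},
\]
which after substituting the definition of $\psi'$ becomes
\[
\psi(g,f)^{-1}\,\psi(g,ef)\,\psi(fg,e)^{-1}\,\psi(f,e),
\]
and comparing this to
\[
\sigma(\partial\psi)(e,f,g) = (\partial\psi)(g,f,e) = \psi(f,e)\,\psi(gf,e)^{-1}\,\psi(g,fe)\,\psi(g,f)^{-1}.
\]
The two expressions match after using $ef=fe$ and $fg=gf$, i.e. the abelianness of $G$. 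Thus $\sigma(\partial\psi)=\partial\psi'\in B^3(G,\k^*)$, and so $\sigma$ maps coboundaries to coboundaries.

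Combining this with what the excerpt has already proved, $\sigma$ induces a well-defined self-map $\bar\sigma$ of $H^3(G,\k^*)$; it is a group homomorphism because $\sigma$ is, and $\bar\sigma^2=\mathrm{id}$ because $\sigma^2=\mathrm{id}$ already at the level of cochains. The main (and really only) point where something nontrivial is used is the commutativity of $G$ in the coboundary computation above: without that, $\psi(g,ef)$ and $\psi(g,fe)$ need not agree, and no simple substitution of variables in $\psi$ would produce a cochain whose coboundary is $\sigma(\partial\psi)$. This is therefore the step I expect to be the key obstacle, and it is also precisely the reason the lemma is stated only for abelian $G$.
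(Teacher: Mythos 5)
Your proposal is correct and follows essentially the same route as the paper: both reduce the lemma to showing that $\sigma$ preserves coboundaries, and both do so by exhibiting the $2$-cochain $\psi'(f,g)=\psi(g,f)^{-1}$ with $\partial\psi'=\sigma(\partial\psi)$, using the commutativity of $G$ at exactly the step you identify. The only (immaterial) difference is a sign convention in the definition of $\partial$ on $2$-cochains, which does not affect the set of coboundaries.
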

\begin{proof}
It suffices show that $\sigma$ preserves $3$-coboundaries. Suppose that $\phi$ is a $3$-coboundary.  There exists a $2$-cochain $J:G\times G\to \k^*$ such that $\phi=\partial(J).$ Define $J':G\times G\to \k^*$ by
\begin{equation}
J'(f,g)=J^{-1}(g,f),\ \mathrm{for\ all}\ f,g\in G.
\end{equation}
Then we have:
\begin{eqnarray*}
\sigma(\phi)(f,g,h)=\frac{J(h,g)J(gh,f)}{J(g,f)J(h,fg)}=\frac{J'^{-1}(g,h)J'^{-1}(f,gh)}{J'^{-1}(f,g)J'^{-1}(fg,h)}
=\frac{J'(f,g)J'(fg,h)}{J'(g,h)J'(f,gh)}=\partial(J')(f,g,h)
\end{eqnarray*} for all $f,g,h\in G.$ This implies that $\sigma$ preserves $3$-coboundaries. Thus,  we have completed the proof.
\end{proof}
For each $\underline{\mathbf{c}}\in \A$, define
\begin{eqnarray}
&& \phi_{\underline{\mathbf{c}}}:\;G\times G\times G\To \k^{\ast}, \notag \\
&&[g_{1}^{i_{1}}\cdots g_{n}^{i_{n}},g_{1}^{j_{1}}\cdots g_{n}^{j_{n}},g_{1}^{k_{1}}\cdots g_{n}^{k_{n}}] \mapsto \\ &&\prod_{l=1}^{n}\zeta_{m_l}^{c_{l}k_{l}[\frac{i_{l}+j_{l}}{m_{l}}]}
\prod_{1\leq s<t\leq n}\zeta_{m_{t}}^{c_{st}k_{t}[\frac{i_{s}+j_{s}}{m_{s}}]}
\prod_{1\leq r<s<t\leq n}\zeta_{(m_{r},m_{s},m_{t})}^{c_{rst}i_{r}j_{s}k_{t}}. \notag
\end{eqnarray}
It is obvious that $\sigma(\omega_{\underline{\mathbf{c}}})=\phi_{\underline{\mathbf{c}}}$ for each $\underline{\mathbf{c}}\in \A.$  It follows from Proposition \ref{P2.14} and Lemma \ref{L2.15}  that we have the following:

\begin{proposition}\label{P2.16}  $\{\phi_{\underline{\mathbf{c}}}|\underline{\mathbf{c}}\in \A\}$ forms a complete set of representatives of the normalized $3$-cocycles on $G$ up to $3$-cohomology.
\end{proposition}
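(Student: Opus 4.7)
The plan is to deduce Proposition 2.16 directly from Proposition 2.14 and Lemma 2.15, using the fact that the explicit formula for $\phi_{\underline{\mathbf{c}}}$ has been engineered to coincide with $\sigma(\omega_{\underline{\mathbf{c}}})$.

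First I would invoke Lemma 2.15, which guarantees that $\sigma$ descends to a well-defined involution on $H^3(G,\k^\ast)$. In particular, $\sigma$ permutes the cohomology classes bijectively, so applying $\sigma$ to any complete system of representatives of $H^3(G,\k^\ast)$ yields another complete system of representatives. Combining this with Proposition 2.14, the family $\{\sigma(\omega_{\underline{\mathbf{c}}})\mid \underline{\mathbf{c}}\in\A\}$ is a complete set of representatives of the normalized $3$-cocycles on $G$ modulo $3$-coboundaries, with each class appearing exactly once (since the correspondence $\underline{\mathbf{c}}\mapsto[\omega_{\underline{\mathbf{c}}}]\mapsto[\sigma(\omega_{\underline{\mathbf{c}}})]$ is injective).

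Next I would verify the identity $\sigma(\omega_{\underline{\mathbf{c}}})=\phi_{\underline{\mathbf{c}}}$ for every $\underline{\mathbf{c}}\in\A$ by a direct substitution. Writing $x=g_1^{i_1}\cdots g_n^{i_n}$, $y=g_1^{j_1}\cdots g_n^{j_n}$, $z=g_1^{k_1}\cdots g_n^{k_n}$, the definition gives $\sigma(\omega_{\underline{\mathbf{c}}})(x,y,z)=\omega_{\underline{\mathbf{c}}}(z,y,x)$, which amounts to swapping the exponent-vectors $(i_\ell)$ and $(k_\ell)$ in the defining formula of $\omega_{\underline{\mathbf{c}}}$. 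The three factors then become, respectively, $\prod_\ell\zeta_{m_\ell}^{c_\ell k_\ell[(i_\ell+j_\ell)/m_\ell]}$, $\prod_{s<t}\zeta_{m_t}^{c_{st}k_t[(i_s+j_s)/m_s]}$, and $\prod_{r<s<t}\zeta_{(m_r,m_s,m_t)}^{c_{rst}i_r j_s k_t}$, which is exactly the formula defining $\phi_{\underline{\mathbf{c}}}(x,y,z)$.

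Combining the two observations, $\{\phi_{\underline{\mathbf{c}}}\mid\underline{\mathbf{c}}\in\A\}=\{\sigma(\omega_{\underline{\mathbf{c}}})\mid\underline{\mathbf{c}}\in\A\}$ is a complete set of representatives of $H^3(G,\k^\ast)$, as claimed. I do not expect any genuine obstacle: the only piece of real content, namely that $\sigma$ sends coboundaries to coboundaries and hence descends to cohomology, has already been handled in Lemma 2.15; the rest is a formal bijection argument combined with a routine index-chase in the defining formulas. The mildly delicate point in the index-chase is to note that the outer products over ordered pairs $s<t$ and ordered triples $r<s<t$ are preserved under the swap because the ordering is on the indices of the generators, not on the exponents being permuted.
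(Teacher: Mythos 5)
Your proposal is correct and follows exactly the paper's own route: the paper proves Proposition \ref{P2.16} by noting $\sigma(\omega_{\underline{\mathbf{c}}})=\phi_{\underline{\mathbf{c}}}$ and combining Proposition \ref{P2.14} with Lemma \ref{L2.15}, which is precisely your argument. The only difference is that you spell out the index-chase that the paper declares ``obvious.''
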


The original definition of an {\it abelian cocycle} was given in \cite{EM}, and an equivalent definition via  the twisted quantum double appeared in \cite{MN}. Let $\phi$ be a $3$-cocycle on $G$, and $D^\phi(G)$ the twisted quantum double of $(\k G, \phi)$ (see \cite{HLYY} for the detail).  $\phi$ is called an {\it  abelian $3$-cocycle} if $D^\phi(G)$ is commutative. Using Proposition \ref{P2.16}, one can easily determine all the abelian $3$-cocycles on $G$. A straightforward computation  shows that  $\phi_{\underline{\mathbf{c}}}$ is an abelian $3$-cocycle if and only if $c_{rst}=0$
for all $1\leq r<s<t\leq n$. We point out that  the twisted Yetter-Drinfeld category $_G^G\mathcal{YD}^{\phi_{\underline{\mathbf{c}}}}$ is a pointed fusion category in case the $3$-cocycle $\phi_{\underline{\mathbf{c}}}$ is abelian.

Denote by $\Vec_G$ the category of $G$-graded vector spaces. Let $\omega$ be a $3$-cocycle on $G$. We define a tensor category $\Vec_G^\omega$. As a category,  $\Vec_G^\omega=\Vec_G$.  The tensor product $V\otimes W$ of two graded modules is endowed with the canonical grading:
 $$(V\otimes W)_g=\oplus_{ef=g}V_e\otimes V_f, \ \forall g\in G.$$  The associator $\mathfrak{a}$ is given by
\begin{eqnarray*}
\mathfrak{a}_{U,V,W}: (U\otimes V)\otimes W&\To& U\otimes (V\otimes W)\\
(x\otimes y)\otimes z &\mapsto& \omega^{-1}(e,f,g) x\otimes (y\otimes z),
\end{eqnarray*} where $x\in U_e,y\in V_f,z\in W_g.$
According to \cite[Proposition 2.6.1]{EGNO}, $\Vec_G^\omega$ is tensor equivalent to the representation category of some Hopf algebra if and only if $\phi$ is a $3$-coboundary on $G$.

\section{Finite-dimensional Quasi-Hopf algebras}

\subsection{General setup} In this subsection, we fix some notations on abelian groups, which will be used throughout  this paper.
Suppose  that $G$ is a finite abelian group, say, $G=\langle g_1\rangle \times \cdots\times \langle g_n\rangle$ such that $|g_i|=m_i$ for $1\leq i\leq n.$  Let $\widehat{G}$ be the character group of $G$ over $\k.$ For each $g=\prod_i^ng_i^{\alpha_i},$ define a character $\chi_g:G\to \k^*$ by
  \begin{equation}\label{3.1}
  \chi_g(h)=\prod_i^n\zeta_{m_i}^{\alpha_i\beta_i},
  \end{equation} where $h=\prod_i^ng_i^{\beta_i}\in G.$ From the definition of $\chi_g$, it is obvious that $\chi^{-1}_g(h)=\chi_{g^{-1}}(h)=\chi_g(h^{-1}).$
  So $\chi:G\To \widehat{G},g\to \chi_g$ is an group isomorphism. Let $\k[G]$ be the group algebra of $G$ over field $\k.$  One can verify that
  \begin{equation}\label{3.2}
  \{1_g=\frac{1}{|G|}\sum_{h\in G}\chi_g(h)h|g\in G\}
  \end{equation} forms a complete set of the orthogonal primitive idempotents of the algebra $\k[G].$
\begin{lemma}\label{L3.1}
For $g,h\in G,$ we have $1_gh=h1_g=\chi^{-1}_g(h)1_g.$
\end{lemma}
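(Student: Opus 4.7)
The plan is to verify the identity by direct computation from the defining formula \eqref{3.2} of $1_g$, using the multiplicativity of the character $\chi_g$ and the commutativity of $G$. There is no conceptual difficulty here; it is essentially the standard orthogonality computation for the primitive idempotents of a group algebra of a finite abelian group.

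First I would expand $1_g h$ as
\begin{equation*}
1_g h=\frac{1}{|G|}\sum_{k\in G}\chi_g(k)\,kh,
\end{equation*}
and then perform the change of summation variable $k'=kh$, i.e.\ $k=k'h^{-1}$. Since $k\mapsto kh$ is a bijection of $G$, this gives
\begin{equation*}
1_g h=\frac{1}{|G|}\sum_{k'\in G}\chi_g(k'h^{-1})\,k'.
\end{equation*}
Next I would invoke the multiplicativity of $\chi_g$ together with the relation $\chi_g(h^{-1})=\chi_g^{-1}(h)$ noted right after \eqref{3.1}, so that $\chi_g(k'h^{-1})=\chi_g(k')\chi_g^{-1}(h)$. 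Pulling $\chi_g^{-1}(h)$ out of the sum yields $1_g h=\chi_g^{-1}(h)\,1_g$, which is the right-hand identity.

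For the second equality $h\,1_g=1_g h$, I would simply observe that $G$ is abelian, so every element of $G$ commutes with every element of $\k[G]$ that is a $\k$-linear combination of group elements; in particular $h1_g=1_g h$, and the lemma follows. The main (trivial) obstacle is just being careful with the convention $\chi_g(h^{-1})=\chi_g^{-1}(h)$, which is already recorded in the paragraph preceding the statement.
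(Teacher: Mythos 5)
Your proposal is correct and follows essentially the same computation as the paper: expand $1_g h$, reindex the sum over $kh$, and use $\chi_g(k h^{-1})=\chi_g(k)\chi_g^{-1}(h)$ to pull out the scalar, with commutativity of $G$ giving $h1_g=1_gh$. No differences worth noting.
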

\begin{proof}
$1_gh=\frac{1}{|G|}\sum_{f\in G}\chi_g(f)fh=\frac{1}{|G|}\sum_{f\in G}\chi_g(fh)\chi_g(h^{-1})fh=\chi^{-1}_g(h)1_g.$
\end{proof}

Now let $G$ be an abelian group. We can define a bigger abelian group $\mathbb{G}$ associated to $G$ in the following way:  assume
\begin{equation}
G=\langle g_1\rangle \times \cdots\times \langle g_n\rangle,\ |g_i|=m_i, 1\leq i\leq n;
\end{equation}  define the group $\mathbb{G}$  as follows:
\begin{equation}\label{e3.4}
 \mathbbm{G}=\langle \mathbbm{g}_1\rangle\times \cdots\times \langle \mathbbm{g}_n\rangle,\ |\mathbbm{g}_i|=\mathbbm{m}_i=m_i^2, 1\leq i\leq n.
\end{equation}
It is obvious that there is a group injection:
\begin{equation}\label{3.3}
\iota:G\to \mathbbm{G},\ \iota(g_i)=\mathbbm{g}_i^{m_i},\ 1\leq i\leq n.
\end{equation}
Let $\zeta_{\mathbbm{m}_i}$ be an $\mathbbm{m}_i$-th primitive root of unity such that $\zeta_{\mathbbm{m}_i}^{m_i}=\zeta_{m_i}$ for $1\leq i\leq n.$ For each $\mathbbm{g}=\prod_{i=1}^n\mathbbm{g}_i^{s_i}\in \mathbbm{G},$ define $\chi_{\mathbbm{g}}:\mathbbm{G}\to \k^*$ by
$$\chi_{\mathbbm{g}}(h)=\prod_{i=1}^{n}\zeta_{\mathbbm{m}_i}^{s_it_i},\ h=\prod_{i=1}^n\mathbbm{g}_i^{t_i}.$$
Similar to \eqref{3.2},  one has  a complete set $\{\mathbbm{1}_{\mathbbm{g}}=\frac{1}{|\mathbbm{G}|}\sum_{h\in \mathbbm{G}}\chi_{\mathbbm{g}}(h)h|\{\mathbbm{1}_{\mathbbm{g}}|\mathbbm{g}\in \mathbbm{G}\}\in \mathbbm{G}\}$ of orthogonal primitive idempotents of the algebra $\k[\mathbbm{G}].$
We have the following equality.
\begin{lemma}\label{L3.2}
 The following holds for all $0\leq s_i\leq m_i-1,\ 1\leq i\leq n$:
\begin{equation}
\sum_{0\leq k_j \leq m_j-1,\\ 1\leq j\leq n}\mathbbm{1}_{(\prod_{i=1}^n\mathbbm{g}_i^{m_ik_i+s_i})}=1_{(\prod_{i=1}^ng_i^{s_i}).}
\end{equation}
\end{lemma}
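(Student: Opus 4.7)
The plan is to verify the identity by expanding both sides as linear combinations of group elements (via the explicit formulas \eqref{3.2} for the primitive idempotents) and comparing coefficients. Throughout, I would tacitly identify $G$ with its image $\iota(G) \subset \mathbbm{G}$, so that the right-hand side $1_{\prod g_i^{s_i}} \in \k[G]$ is regarded as an element of $\k[\mathbbm{G}]$ via \eqref{3.3}; otherwise the equality does not even make sense as a statement in a single algebra.

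First, I would plug in the formula $\mathbbm{1}_{\mathbbm{g}} = \frac{1}{|\mathbbm{G}|}\sum_{h\in \mathbbm{G}} \chi_{\mathbbm{g}}(h) h$ on the left and interchange the two summations. Writing a generic $h\in \mathbbm{G}$ as $h=\prod_i \mathbbm{g}_i^{t_i}$ with $0\le t_i\le \mathbbm{m}_i-1 = m_i^2-1$, the character $\chi_{\prod \mathbbm{g}_i^{m_ik_i+s_i}}(h) = \prod_i \zeta_{\mathbbm{m}_i}^{(m_ik_i+s_i)t_i}$ splits into a factor $\prod_i \zeta_{\mathbbm{m}_i}^{s_it_i}$ independent of $k$, and a factor $\prod_i \zeta_{\mathbbm{m}_i}^{m_ik_it_i} = \prod_i \zeta_{m_i}^{k_it_i}$ (using $\zeta_{\mathbbm{m}_i}^{m_i}=\zeta_{m_i}$). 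Summing the latter over $0\le k_i\le m_i-1$ is the classical geometric sum which equals $m_i$ if $m_i\mid t_i$ and $0$ otherwise.

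Consequently, only those $h$ survive that lie in the image of $\iota$, namely $h=\prod_i \mathbbm{g}_i^{m_iu_i}=\iota(\prod_i g_i^{u_i})$ for $0\le u_i\le m_i-1$. For such an $h$, the surviving factor simplifies via $\zeta_{\mathbbm{m}_i}^{s_i m_i u_i}=\zeta_{m_i}^{s_iu_i}$, so that
\begin{equation*}
\chi_{\prod \mathbbm{g}_i^{m_ik_i+s_i}}\big(\iota(\textstyle\prod_i g_i^{u_i})\big) \text{ (summed over } k\text{)} = \Big(\prod_i m_i\Big)\prod_i \zeta_{m_i}^{s_iu_i} = |G|\,\chi_{\prod g_i^{s_i}}\big(\textstyle\prod_i g_i^{u_i}\big),
\end{equation*}
by the definition \eqref{3.1} of the character $\chi_g$ on $G$.

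Putting the pieces together, the left-hand side becomes
\begin{equation*}
\frac{1}{|\mathbbm{G}|}\cdot |G| \sum_{g'\in G} \chi_{\prod g_i^{s_i}}(g')\,\iota(g') = \frac{1}{|G|}\sum_{g'\in G}\chi_{\prod g_i^{s_i}}(g')\,g',
\end{equation*}
using $|\mathbbm{G}|=|G|^2$ and the identification via $\iota$. By \eqref{3.2} this is precisely $1_{\prod g_i^{s_i}}$, completing the proof. The main obstacle is purely notational bookkeeping with the two nested indexing systems $(k_i,s_i)$ versus $(t_i)$; the substitution $t_i=m_iu_i$ is the only point where one must be careful to match exponents correctly, but no conceptual difficulty arises.
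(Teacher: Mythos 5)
Your proposal is correct and follows essentially the same route as the paper's own proof: expand the idempotents via the character formula, interchange sums, evaluate the geometric sum $\sum_{k_i}\zeta_{\mathbbm{m}_i}^{m_ik_ir_i}$ to see that only $h\in\iota(G)$ survive, and then recognize the result as $\frac{1}{|G|}\sum_{g\in G}\chi_{\prod g_i^{s_i}}(g)\,g$. Your explicit remark about identifying $G$ with $\iota(G)\subset\mathbbm{G}$ is a point the paper leaves tacit, but it is the same argument.
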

\begin{proof}By definition we have
$$\sum_{0\leq k_j \leq m_j-1,\\ 1\leq j\leq n}\mathbbm{1}_{(\prod_{i=1}^n\mathbbm{g}_i^{m_ik_i+s_i})}=\frac{1}{|\mathbbm{G}|}\sum_{0\leq k_j \leq m_j-1,\\ 1\leq j\leq n}\sum_{h\in \mathbbm{G}}\chi_{(\prod_{i=1}^n\mathbbm{g}_i^{m_ik_i+s_i})}(h)h.$$
Suppose $h=\mathbbm{g}_1^{r_1}\mathbbm{g}_2^{r_2}\cdots \mathbbm{g}_n^{r_n}.$ Then we have the equation:
\begin{eqnarray*}
&&\sum_{0\leq k_j \leq m_j-1, 1\leq j\leq n}\chi_{(\prod_{i=1}^n\mathbbm{g}_i^{m_ik_i+s_i})}(h)\\
&&=\sum_{0\leq k_j \leq m_j-1, 1\leq j\leq n}\prod_i^n\zeta_{\mathbbm{m}_i}^{(m_ik_i+s_i)r_i}\\
&&=\prod_{i=1}^n(\sum_{0\leq k_i \leq m_i-1}\zeta_{\mathbbm{m}_i}^{(m_ik_i+s_i)r_i}).
\end{eqnarray*}
Note that  $\sum_{0\leq k_l \leq m_l-1}\zeta_{\mathbbm{m}_l}^{m_lk_lr_l}=0$ if $r_l\neq tm_l$ for some integer $0\leq t\leq m_l-1$.  Hence $$\sum_{0\leq k_j \leq m_j-1, 1\leq j\leq n}\chi_{(\prod_{i=1}^n\mathbbm{g}_i^{m_ik_i+s_i})}(h)\neq 0$$ if and only if $r_i=t_im_i$ for $0\leq t_i\leq m_i-1, 1\leq i\leq n,$ i.e., $h$ is contained in the subgroup $G.$  If $r_i=t_im_i$ for $1\leq i\leq n,$ then $h=\mathbbm{g}_1^{t_1m_1}\mathbbm{g}_2^{t_2m_2}\cdots \mathbbm{g}_n^{t_nm_n}$ and we have:
\begin{eqnarray*}
\sum_{0\leq k_j \leq m_j-1, 1\leq j\leq n}\chi_{(\prod_{i=1}^n\mathbbm{g}_i^{m_ik_i+s_i})}(h)&=&\prod_{i=1}^n(\sum_{0\leq k_i \leq m_i-1}\zeta_{\mathbbm{m}_i}^{(m_ik_i+s_i)t_im_i})\\
&=&\prod_{i=1}^n(\sum_{0\leq k_i \leq m_i-1}\zeta_{\mathbbm{m}_i}^{s_it_im_i})\\
&=&\prod_{i=1}^n(m_i\zeta_{m_i}^{s_it_i})\\
&=&|G|\prod_{i=1}^n\zeta_{m_i}^{s_it_i}.
\end{eqnarray*}
It follows that
\begin{eqnarray*}
\sum_{0\leq k_j \leq m_j-1, 1\leq j\leq n}\mathbbm{1}_{(\prod_{i=1}^n\mathbbm{g}_i^{m_ik_i+s_i})}
&=&\frac{1}{|\mathbbm{G}|}\sum_{0\leq k_j \leq m_j-1, 1\leq j\leq n}\sum_{h\in G}\chi_{(\prod_{i=1}^n\mathbbm{g}_i^{m_ik_i+s_i})}(h)h.\\
&=&\frac{|G|}{|\mathbbm{G}|}\sum_{0\leq t_j\leq m_j-1,1\leq j\leq n}\prod_{i=1}^n\zeta_{m_i}^{s_it_i}(\prod_{i=1}^n\mathbbm{g}_i^{t_im_i})\\
&=&\frac{1}{|G|}\sum_{0\leq t_j\leq m_j-1,1\leq j\leq n}\chi_{(\prod_{i=1}^ng_i^{s_i})}(\prod_{i=1}^ng_i^{t_i})(\prod_{i=1}^ng_i^{t_i})\\
\end{eqnarray*}

\begin{eqnarray*}
&=&\frac{1}{|G|}\sum_{g\in G}\chi_{(\prod_{i=1}^ng_i^{s_i})}(g)g\\
&=&1_{(\prod_{i=1}^ng_i^{s_i})}.
\end{eqnarray*}
Thus,  the claimed equality holds.
\end{proof}

\subsection{Finite dimensional quasi-Hopf algebras and Cartan matrices}
Keep the notations of the last subsection. Let  $\mathfrak{D}=\mathfrak{D}(\mathbbm{G},(h_i)_{1\leq i\leq \theta},(\chi_i)_{1\leq i\leq \theta},A)$ be a datum of finite Cartan type, where $A=(a_{ij})_{1\leq i,j\leq \theta}$ is a finite Cartan matrix. Let $\Omega$ be the set of the connected components of $I=\{1,2,\cdots,\theta\}.$ Denote by $(s_{ij})_{1\leq i\leq \theta,1\leq j\leq n}$ and $(r_{ij})_{1\leq i\leq \theta,1\leq j\leq n}$ the two families of integers satisfying:
\begin{eqnarray}
&&h_i=\prod_{j=1}^n\mathbbm{g}_j^{s_{ij}},\ \chi_i(\mathbbm{g}_j)=\zeta_{\mathbbm{m}_j}^{r_{ij}},\label{3.5}\\
&&0\leq s_{ij}, r_{ij}<\mathbbm{m}_j\ \mathrm{ for\ all}\ 1\leq i\leq \theta, 1\leq j\leq n.
\end{eqnarray}
It is obvious that $(s_{ij})_{1\leq i\leq \theta,1\leq j\leq n}$ and $(r_{ij})_{1\leq i\leq \theta,1\leq j\leq n}$ are uniquely determined by $\mathfrak{D}$.
Let $\Gamma(\mathfrak{D})$ be the subset of $\mathcal{A}$ such that for each $\underline{c}\in \Gamma(\mathfrak{D}),$ $ c_{rst}=0$ for all $ 0\leq r<s<t\leq n$ and
\begin{eqnarray}
&& s_{ij}\equiv c_jr_{ij} \mod m_j, 1\leq i\leq \theta, 1\leq j\leq n,\label{3.7}\\
&& c_{ij}r_{lj}\equiv 0 \mod m_j, 1\leq l\leq \theta, 1\leq i< j\leq n,\label{3.8}\\
&& c_{ij}m_{i}\equiv 0 \mod m_j,  1\leq i< j\leq n.\label{3.9}
\end{eqnarray}
For each $\underline{c}\in \Gamma(\mathfrak{D}),$ define  on $G$ the functions $\Theta, \Psi_l,  \Upsilon$ and $\mathcal{F}_i$  as follows:
\begin{eqnarray}
&&\Theta_l(g)=\prod_{i=1}^{n}\zeta_{\mathbbm{m}_i}^{-t_is_{li}}, \  g=\prod_{i=1}^ng_i^{t_i}\in G,1\leq l\leq \theta.\\
&&\Psi_l(f,h)=\prod_{i=1}^{n}\zeta_{\mathbbm{m}_i}^{c_iq_i\varphi_{li}(f)}\prod_{1\leq i<j\leq n}\zeta_{m_im_j}^{c_{ij}q_j\varphi_{li}(f)}, \
f=\prod_{i=1}^ng_i^{p_i},\ h=\prod_{i=1}^ng_i^{q_i}\in G,1\leq l\leq \theta.\label{e3.14}\\
&&\Upsilon(g)=\prod_{i=1}^n\zeta_{\mathbbm{m}_i}^{-c_il_im_i}\prod_{1\leq s<t\leq n}\zeta_{m_sm_t}^{-c_{st}l_tm_s}, \ g=\prod_{i=1}^ng_i^{l_i}.\label{e3.15}\\
&&\mathcal{F}_i(g)=\prod_{j=1}^n\zeta_{\mathbbm{m}_j}^{-c_j(k_j-r_{ij})\varphi_{ij}(g)}
\prod_{1\leq s<t\leq n}\zeta_{\mathbbm{m}_s\mathbbm{m}_t}^{-c_{st}(k_t-r_{it})\varphi_{is}(g)}, \ g=\prod_{i=l}^ng_i^{k_l}.\label{e3.16}
\end{eqnarray}
Here \begin{equation}
\varphi_{li}(g)=(k_i-r_{li})'-(k_i-r_{li}), \ g=\prod_{i=l}^ng_i^{k_l},
\end{equation}
and $(p_i-r_{li})'$ is the remainder of $p_i-r_{li}$ divided by $m_i.$

In order to construct quasi-Hopf algebras, we need the notions of modified linking parameters and modified root vector parameters.
\begin{definition}
A family of linking parameters $\lambda=(\lambda_{ij})_{1\leq i,j\leq \theta}$ for $\mathfrak{D}$ is said to be modified if
\begin{equation}\label{3.12}
h_ih_j\notin G\ \mathrm{implies}\ \lambda_{ij}=0\ \mathrm{for\ all}\ 1\leq i<j\leq \theta, i\nsim j.
\end{equation}
A family of root vector parameters $\mu=(\lambda_{\alpha})_{\alpha\in R}$ for $\mathfrak{D}$ is said to be modified if
\begin{equation}\label{3.13}
h_\alpha^{N_J}\notin G\ \mathrm{implies}\ \mu_{\alpha}=0\ \mathrm{for\ all}\ \alpha\in R_J^+, J\in \Omega.
\end{equation}
\end{definition}
Now we can give the main result of this paper.

\begin{theorem}\label{T3.4}
 Let $\lambda=(\lambda_{ij})_{1\leq i,j\leq \theta}$ and $\mu=(\lambda_{\alpha})_{\alpha\in R}$ be two families of  modified linking parameters and root vector parameters respectively  for a datum of Cartan type $\mathfrak{D}=\mathfrak{D}(\mathbbm{G},(h_i)_{1\leq i\leq \theta},(\chi_i)_{1\leq i\leq \theta},A)$, and $\underline{c}$ a nonzero element in $\Gamma(\mathfrak{D}).$ Then we have a finite-dimensional quasi-Hopf algebra $u(\mathfrak{D},\lambda,\mu,\Phi_{\underline{c}})$ generated by $G$ and $\{X_1,\cdots,X_\theta\}$ subject to the relations:
\begin{eqnarray}
&&gX_ig^{-1}=\chi_i(g)X_i,\ \mathrm{for\ all}\ 1\leq i\leq \theta, g\in G,\label{e3.20}\\
&&ad_C(X_i)^{1-a_{ij}}(X_j)=0,\ \mathrm{for\ all}\ i\neq j, i\sim j,\label{e3.21}\\
&&ad_C(X_i)(X_j)=\lambda_{ij}(1-h_ih_j),\ \mathrm{for\ all}\ i<j, i\nsim j,\label{e3.22}\\
&&X_\alpha^{N_J}=\mathbf{u}_{\alpha}(\mu), \ \mathrm{for\ all}\ \alpha\in R_J^+, J\in \Omega.\label{e3.23}
\end{eqnarray}
The coalgebra structure of $u(\mathfrak{D},\lambda,\mu,\Phi_{\underline{c}})$ is given  by
\begin{equation}\label{3.18}\bigtriangleup(X_l)=\sum_{f,g\in G}\Psi_l(f,g)X_l1_f\otimes 1_g+\sum_{f\in G}\Theta_l(f)1_f\otimes X_l, \bigtriangleup(g)=g\otimes g, 1\leq l\leq \theta, g\in G.
\end{equation}
The associator  of $u(\mathfrak{D},\lambda,\mu,\Phi_{\underline{c}})$ is determined by
\begin{equation}\label{3.19}
\Phi_{\underline{c}}=\sum_{f,g,h\in G}\phi_{\underline{c}}(f,g,h)(1_f\otimes 1_g\otimes 1_h).
\end{equation}
The antipode $(\mathcal{S},\mathcal{\alpha},1)$ is  defined by
\begin{equation}\label{3.20}
\mathcal{\alpha}=\sum_{g\in G}\Upsilon(g)1_g,\ \mathcal{S}(X_i)=\sum_{g\in G}\mathcal{F}_i(g)X_i1_g.
\end{equation}
\end{theorem}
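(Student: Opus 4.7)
The plan is to realize $u(\mathfrak{D},\lambda,\mu,\Phi_{\underline{c}})$ as a quasi-Hopf subalgebra of the Drinfeld twist of the Andruskiewitsch--Schneider Hopf algebra $u(\mathfrak{D},\lambda,\mu)$ associated to the \emph{enlarged} group $\mathbbm{G}$. By Theorem \ref{T2.13}, $u(\mathfrak{D},\lambda,\mu)$ is a finite-dimensional Hopf algebra with trivial associator, so for any normalized invertible $2$-cochain $J$ on $\mathbbm{G}$, viewed as the element $J=\sum_{\mathbbm{f},\mathbbm{g}\in\mathbbm{G}}J(\mathbbm{f},\mathbbm{g})\mathbbm{1}_{\mathbbm{f}}\otimes\mathbbm{1}_{\mathbbm{g}}$ of $\k\mathbbm{G}\otimes\k\mathbbm{G}$, formulas \eqref{2.2}--\eqml{2.4} provide a quasi-Hopf algebra $u(\mathfrak{D},\lambda,\mu)^J$ whose associator is $\Phi_J=\partial(J)\in (\k\mathbbm{G})^{\otimes 3}$. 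The heart of the proof is to write down, from the datum $\underline{c}\in\Gamma(\mathfrak{D})$, an explicit $J$ such that $\Phi_J$ actually lies in $(\k G)^{\otimes 3}$ and coincides there with \eqref{3.19}, and then to verify that the subalgebra generated by $G$ and $\{X_1,\ldots,X_\theta\}$ is closed under the twisted structure.

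First I would construct $J$ so that its coboundary reproduces $\phi_{\underline{c}}$. Although $\phi_{\underline{c}}$ need not be a coboundary on $G$, conditions \eqref{3.7}--\eqref{3.9} are tailored so that a normalized $2$-cochain $J:\mathbbm{G}\times\mathbbm{G}\to\k^*$ exists whose coboundary $\partial(J)$, when summed against the primitive idempotents $1_f=\sum\mathbbm{1}_{\mathbbm{g}}$ of $\k G$ via Lemma \ref{L3.2}, recovers $\phi_{\underline{c}}(f,g,h)$ for $f,g,h\in G$ and vanishes on the complementary cosets. The prescribed functions $\Theta_l,\Psi_l,\Upsilon,\mathcal{F}_i$ of \eqref{e3.14}--\eqref{e3.16} are then precisely the scalars produced when one computes $\Delta_J(X_l)=J\Delta(X_l)J^{-1}$, $\alpha_J=\sum S(\overline{f}_i)\alpha\overline{h}_i$ and $S_J(X_i)=\beta_JS(X_i)\beta_J^{-1}$ using the primitive structure $\Delta(X_l)=X_l\otimes 1+h_l\otimes X_l$ together with Lemma \ref{L3.1} to move the group element $h_l=\prod_j\mathbbm{g}_j^{s_{lj}}$ past the idempotents.

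With $J$ in hand, the subalgebra $H\subseteq u(\mathfrak{D},\lambda,\mu)^J$ generated by $G$ and $\{X_i\}$ automatically satisfies the algebra relations \eqref{e3.20}--\eqref{e3.23}, since the algebra structure of $u(\mathfrak{D},\lambda,\mu)$ is unchanged by the twist and these relations already hold in the ambient algebra. The conditions \eqref{3.12}--\eqref{3.13} on the modified parameters $\lambda$ and $\mu$ guarantee that the right-hand sides of the linking and root-vector relations actually lie in $\k G\subseteq H$, since any contribution with $h_ih_j\notin G$ or $h_\alpha^{N_J}\notin G$ is forced to vanish. The coalgebra and quasi-Hopf data of $H$ is then obtained by restriction: one checks (a) $\Phi_J\in (\k G)^{\otimes 3}$ and equals \eqref{3.19}, (b) $\Delta_J(X_l)\in H\otimes H$ and equals \eqref{3.18}, and (c) $\alpha_J\in \k G$ and $S_J(X_i)\in H$ with the formulas \eqref{3.20}. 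All quasi-Hopf axioms are then inherited from $u(\mathfrak{D},\lambda,\mu)^J$, and finite-dimensionality of $H$ is immediate.

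The main obstacle will be the bookkeeping in verifying (a)--(c), i.e.\ that the sums over $\mathbbm{G}$ in the twisted structure collapse onto sums over the subgroup $G$. This amounts to a character-theoretic computation: one writes $\partial(J)(\mathbbm{f},\mathbbm{g},\mathbbm{h})$ and $J\Delta(X_l)J^{-1}$ in terms of the roots of unity $\zeta_{\mathbbm{m}_i}$, then uses \eqref{3.7}--\eqref{3.9} to see that the contributions from $\mathbbm{g}\notin G$ (which carry characters of $\mathbbm{G}/G$) average to zero when grouped via Lemma \ref{L3.2}, while the surviving $G$-terms exactly reproduce $\phi_{\underline{c}}$, $\Psi_l$ and $\Theta_l$. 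Once this collapse is established, everything else — the quasi-Hopf axioms, the explicit associator, and the antipode formulas — follows formally from Drinfeld's twisting construction applied to the AS-Hopf algebra.
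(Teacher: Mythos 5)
Your proposal follows essentially the same route as the paper: twist the AS-Hopf algebra $u(\mathfrak{D},\lambda,\mu)$ over the enlarged group $\mathbbm{G}$ by the explicit $2$-cochain built from $\underline{c}$, take the subalgebra generated by $G$ and the $X_i$, and use the idempotent-summation identity of Lemma \ref{L3.2} together with conditions \eqref{3.7}--\eqref{3.9} to collapse the twisted structure maps onto $\k G$ and recover $\Psi_l,\Theta_l,\Upsilon,\mathcal{F}_i$ and $\phi_{\underline{c}}$. The only detail you gloss over is that $\mathbf{u}_\alpha(\mu)\in\k G$ requires an induction on the height of $\alpha$ via the recursion \eqref{2.17} (the paper's Lemma \ref{L3.8}), not just the condition \eqref{3.13} on the simple-root level; otherwise the argument matches.
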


The proof of Theorem \ref{T3.4}  will be delivered in the next subsection.  Since the quasi-Hopf algebra $u(\mathfrak{D},\lambda,\mu,\Phi_{\underline{c}})$ is generated by the abelian group $G$ and the braided vector space of Cartan type $V=\k\{X_1,\cdots,X_\theta\},$  we  shall call it a quasi-Hopf algebra of Cartan type in the sequel. We will say that $u(\mathfrak{D},\lambda,\mu,\Phi_{\underline{c}})$ is {\bf{associated to the Cartan matrix}} $A$, and call $\theta$ the {\bf{rank}} of $u(\mathfrak{D},\lambda,\mu,\Phi_{\underline{c}}),$ as well as  the rank of $A$.

\begin{remark}
The relations (3.20)-(3.23) for $u(\mathfrak{D},\lambda,\mu,\Phi_{\underline{c}})$ are similar to those of AS-Hopf algebras $u(\mathfrak{D},\lambda,\mu),$ but the generators of the two algebras are essential different. In fact, $u(\mathfrak{D},\lambda,\mu)$ is generated by $\mathbb{G}$ and $\{X_1,\cdots,X_\theta\},$ and $u(\mathfrak{D},\lambda,\mu,\Phi_{\underline{c}})$ is the subalgebra of $u(\mathfrak{D},\lambda,\mu)$ generated by subgroup $G$ and $\{X_1,\cdots,X_\theta\}.$ Moreover, we will prove that $u(\mathfrak{D},\lambda,\mu,\Phi_{\underline{c}})$ is a quasi-Hopf subalgebra of $u(\mathfrak{D},\lambda,\mu)^J$ for some twist $J$ of $u(\mathfrak{D},\lambda,\mu).$
\end{remark}

It is obvious that $u(\mathfrak{D},\lambda,\mu,\Phi_{\underline{c}})$ is {\bf{radically\ graded}} if and only if
$u_\alpha(\mu)=0, \lambda_{ij}(1-h_ih_j)=0,$ for all $\alpha\in R^+, 1\leq i,j\leq \theta,$ and the radical is the ideal generated by $X_1,\cdots, X_{\theta}.$  Note that if $\mu=0,$ then $u_\alpha(\mu)=0$ by \eqref{2.14}.  It follows  the definitions of a familly  of modified linking parameters and a family of  modified root vector parameters thta the quasi-hopf algebra $u(\mathfrak{D},\lambda,\mu,\Phi_{\underline{c}})$ is radically graded if and only if  both $\lambda=0$ and $\mu=0$.
\begin{remark}
\begin{itemize}
\item[(1).] Let $G$ be a cyclic group, $H$ the AS-Hopf algebra $u(\mathfrak{D},0,0)$.  The quasi-Hopf algebra $u(\mathfrak{D},0,0,\Phi_{\underline{c}})$ is nothing but the basic quasi-Hopf algebra $A(H,c)$ (over the cyclic group $G$) classified in \cite{A}.
\item[(2).] Suppose that $A$ is a connected Cartan Matrix of rank $n$, and $\mathfrak{g}$ is the simple Lie algebra associated to $A$. Let $G=\Z_m^n$ for some positive odd integer $m,$ which is not divisible by $3$ if $A$ is of type $G_2.$ Let $h_i=\prod_{j=1}^n\mathbbm{g}_j^{a_{ij}},$ $\chi_i(\mathbbm{g}_l)=\zeta_{m^2}^{\delta_{il}}$ for $1\leq i,l\leq n.$ Let $c_i=a_{ii}, c_{j,k}=a_{jk}$ for $1\leq i\leq n, 1\leq j<k\leq n.$ Then $u(\mathfrak{D},0,0,\Phi_{\underline{c}})$ is the half small quasi-quantum groups $A_q(\mathfrak{g})$ given in \cite{EG2}, where $q=\zeta_{m^2}$.
\item[(2).] Suppose that $A$ is the diagonal Cartan matrix $A_1\times A_1\times \cdots A_1$.  Then the dual of the quasi-Hopf algebras $u(\mathfrak{D},0,0,\Phi_{\underline{c}})$ is  a quasi-quantum linear space, see \cite{HY}.
\end{itemize}
\end{remark}

\subsection{The proof of Theorem 3.4}
Let $H=u(\mathfrak{D},\lambda,\mu)$ be the AS-Hopf algebra given in Theorem \ref{T2.13}, which is generated by $\mathbbm{G}$ and $\{X_1,\cdots,X_\theta\}.$ By  \cite[Theorem 4.5]{AS2}, the group of group-like elements of $H$ is $\mathbbm{G}$. According to Subsection 3.1, we know that $\chi:\mathbbm{G}\to \widehat{\mathbbm{G}}$ is a group isomorphism. Let $\{\eta_1,\cdots,\eta_\theta\}$ be the set of elements in $\mathbbm{G}$ such that $\chi_{\eta_i}=\chi_i$ for $1\leq i\leq \theta.$ By \eqref{3.5}, it is obvious that
\begin{equation}
\eta_i=\prod_{j=1}^n\mathbbm{g}_j^{r_{ij}}, 1\leq i\leq \theta.
\end{equation} Moreover we have the following.
\begin{lemma}\label{L3.5}
\begin{equation}\label{3.14}
\mathbbm{1}_{\mathbbm{g}}X_i=X_i\mathbbm{1}_{\mathbbm{g}\eta_i},\ \mathrm{for\ all}\ \mathbbm{g}\in \mathbbm{G}, 1\leq i\leq \theta.
\end{equation}
\end{lemma}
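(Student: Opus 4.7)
The plan is to prove the identity by a direct computation, using three ingredients already at hand: the explicit expansion of the primitive idempotent $\mathbbm{1}_{\mathbbm{g}}$ in terms of group elements, the commutation rule between $\mathbbm{G}$ and $X_i$ coming from the action of the group, and the fact that the character $\chi_i$ associated to the skew-primitive generator $X_i$ is realized as $\chi_{\eta_i}$ via the isomorphism $\chi:\mathbbm{G}\to\widehat{\mathbbm{G}}$.

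First I would write out the left-hand side using \eqref{3.2} applied to $\mathbbm{G}$, namely
\[
\mathbbm{1}_{\mathbbm{g}}X_i \;=\; \frac{1}{|\mathbbm{G}|}\sum_{h\in \mathbbm{G}}\chi_{\mathbbm{g}}(h)\, h X_i.
\]
Next I would push $X_i$ to the left of $h$ by means of the group-action relation $hX_i=\chi_i(h)X_ih$, which is just \eqref{e3.20} with $g$ replaced by $h\in\mathbbm{G}$. This gives
\[
\mathbbm{1}_{\mathbbm{g}}X_i \;=\; X_i\cdot\frac{1}{|\mathbbm{G}|}\sum_{h\in \mathbbm{G}}\chi_{\mathbbm{g}}(h)\chi_i(h)\, h.
\]

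Then I would use that $\chi_i=\chi_{\eta_i}$ together with the fact that $\chi$ is a group homomorphism, so $\chi_{\mathbbm{g}}(h)\chi_{\eta_i}(h)=\chi_{\mathbbm{g}\eta_i}(h)$. Substituting back and reapplying the formula for the primitive idempotent, the sum collapses to $\mathbbm{1}_{\mathbbm{g}\eta_i}$, which yields $X_i\mathbbm{1}_{\mathbbm{g}\eta_i}$, proving \eqref{3.14}.

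The argument is essentially a one-line calculation, so there is no real obstacle; the only subtlety is keeping straight the distinction between the character $\chi_i$ labelling the Yetter--Drinfeld action on $X_i$ and the character $\chi_{\eta_i}$ associated via the canonical isomorphism \eqref{3.1} to the group element $\eta_i=\prod_j\mathbbm{g}_j^{r_{ij}}$, but this identification has already been set up just before the lemma.
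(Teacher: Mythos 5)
Your computation is exactly the paper's proof: expand the idempotent, commute $X_i$ past each group element using $hX_ih^{-1}=\chi_i(h)X_i=\chi_{\eta_i}(h)X_i$, and recombine via multiplicativity of $\chi_{\mathbbm{g}}\chi_{\eta_i}=\chi_{\mathbbm{g}\eta_i}$. The only nitpick is that the commutation rule should be cited from the relation in the AS-Hopf algebra $u(\mathfrak{D},\lambda,\mu)$ of Theorem \ref{T2.13}, which holds for all $h\in\mathbbm{G}$, rather than from \eqref{e3.20}, which is stated only for the subgroup $G$.
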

\begin{proof}  Follows from the following equations:
\begin{eqnarray*}
\mathbbm{1}_{\mathbbm{g}}X_i=\frac{1}{|\mathbbm{G}|}\sum_{\mathbbm{f}\in \mathbbm{G}}\chi_{\mathbbm{g}}(\mathbbm{f})\mathbbm{f}X_i=\frac{1}{|\mathbbm{G}|}\sum_{\mathbbm{f}\in \mathbbm{G}}\chi_{\mathbbm{g}}(\mathbbm{f})\chi_{\eta_i}(\mathbbm{f})X_i\mathbbm{f}=\frac{1}{|\mathbbm{G}|}X_i\sum_{\mathbbm{f}\in \mathbbm{G}}\chi_{\mathbbm{g}\eta_i}(\mathbbm{f})\mathbbm{f}
=X_i\mathbbm{1}_{\mathbbm{g}\eta_i}.
\end{eqnarray*}
\end{proof}

Given $\underline{c}\in \Gamma(\mathfrak{D}),$   we define:
\begin{equation}\label{3.15} J_{\underline{c}}:\; \mathbbm{G}\times \mathbbm{G}\to k^{\ast};\;\;\;\;(\mathbbm{g}_{1}^{x_{1}}\cdots \mathbbm{g}_{n}^{x_{n}},\mathbbm{g}_{1}^{y_{1}}\cdots \mathbbm{g}_{n}^{y_{n}})\mapsto
\prod_{l=1}^{n}\zeta_{\mathbbm{m}_l}^{c_{l}y_{l}(x_l'-x_l)}
\prod_{1\leq s<t\leq n}\zeta_{m_sm_t}^{c_{st}y_{t}(x_s'-x_s)},
\end{equation}
where the element  $x'_i$  stands for the remainder of $x_i$ divided by $m_i$,  for  $1\leq i\leq n$.  One can easily verify that $J_{\underline{c}}$ is a 2-cochain of $\mathbbm{G}.$
We will see that this 2-cochain $J_{\underline{c}}$ induces a twist of the Hopf algebra $H$. Define $\mathbbm{J}_{\underline{c}}=\sum_{\mathbbm{f,g}\in \mathbbm{G}}J_{\underline{c}}(\mathbbm{f,g})\mathbbm{1}_\mathbbm{f}\otimes \mathbbm{1}_\mathbbm{g}\in H\otimes H$.

\begin{lemma}\label{L3.6}
$\mathbbm{J}_{\underline{c}}$ is a twist of $H.$
\end{lemma}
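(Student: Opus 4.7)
The plan is to verify the two conditions of Definition 2.1 directly, namely invertibility and the counit conditions. The key structural observation is that $\{\mathbbm{1}_{\mathbbm{f}} \mid \mathbbm{f} \in \mathbbm{G}\}$ is a complete set of orthogonal primitive idempotents of $\k[\mathbbm{G}]$ (by the discussion around \eqref{3.2} transferred to $\mathbbm{G}$), so $\{\mathbbm{1}_{\mathbbm{f}} \otimes \mathbbm{1}_{\mathbbm{g}} \mid \mathbbm{f}, \mathbbm{g} \in \mathbbm{G}\}$ is a complete set of orthogonal idempotents in $\k[\mathbbm{G}] \otimes \k[\mathbbm{G}] \subseteq H \otimes H$ with $\sum_{\mathbbm{f},\mathbbm{g}} \mathbbm{1}_{\mathbbm{f}} \otimes \mathbbm{1}_{\mathbbm{g}} = 1 \otimes 1$. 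Since $J_{\underline{c}}$ takes values in $\k^{*}$ (being a product of roots of unity), I can write down the inverse explicitly as
\[
\mathbbm{J}_{\underline{c}}^{-1} = \sum_{\mathbbm{f},\mathbbm{g}\in \mathbbm{G}} J_{\underline{c}}(\mathbbm{f},\mathbbm{g})^{-1}\, \mathbbm{1}_{\mathbbm{f}}\otimes \mathbbm{1}_{\mathbbm{g}},
\]
and orthogonality of the idempotents immediately gives $\mathbbm{J}_{\underline{c}} \mathbbm{J}_{\underline{c}}^{-1} = \mathbbm{J}_{\underline{c}}^{-1} \mathbbm{J}_{\underline{c}} = 1 \otimes 1$.

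For the counit conditions, I first compute $\varepsilon(\mathbbm{1}_{\mathbbm{f}})$. Since $\varepsilon$ is an algebra map with $\varepsilon(h)=1$ for all $h \in \mathbbm{G}$,
\[
\varepsilon(\mathbbm{1}_{\mathbbm{f}}) = \frac{1}{|\mathbbm{G}|}\sum_{h \in \mathbbm{G}} \chi_{\mathbbm{f}}(h) = \delta_{\mathbbm{f},1}
\]
by the orthogonality of characters. Applying $\varepsilon \otimes \mathrm{id}$ and $\mathrm{id} \otimes \varepsilon$ to $\mathbbm{J}_{\underline{c}}$ then collapses the sums and reduces the problem to checking that
\[
J_{\underline{c}}(1,\mathbbm{g}) = 1 = J_{\underline{c}}(\mathbbm{f},1) \quad \text{for all } \mathbbm{f}, \mathbbm{g} \in \mathbbm{G}.
\]

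Finally, both of these identities are transparent from the formula \eqref{3.15}: if the first argument is $1$, then all exponents $x_{i} = 0$ (hence $x_{i}' = 0$ and $x_{i}' - x_{i} = 0$), so every factor in the product is $\zeta^{0} = 1$; if the second argument is $1$, then all $y_{i} = 0$, and again every factor reduces to $1$. There is no real obstacle here since the entire claim is essentially bookkeeping on orthogonal idempotents combined with the normalization built into $J_{\underline{c}}$; the slightly subtle part is just making sure the counit is correctly computed on idempotents via character orthogonality, after which the counit axioms are immediate. The content that actually uses the constraints \eqref{3.7}--\eqref{3.9} defining $\Gamma(\mathfrak{D})$ is not needed for this particular lemma and will only enter later when one shows that the twist descends to the subalgebra generated by $G$ and the $X_{i}$.
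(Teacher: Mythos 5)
Your proposal is correct and follows essentially the same route as the paper's proof: exhibit the inverse via the orthogonal idempotents $\mathbbm{1}_{\mathbbm{f}}\otimes\mathbbm{1}_{\mathbbm{g}}$, compute $\varepsilon(\mathbbm{1}_{\mathbbm{f}})=\delta_{\mathbbm{f},1}$ by character orthogonality, and reduce the counit conditions to the normalization $J_{\underline{c}}(1,\mathbbm{g})=J_{\underline{c}}(\mathbbm{f},1)=1$ read off from \eqref{3.15}. Your closing remark that the constraints \eqref{3.7}--\eqref{3.9} are not needed for this lemma is also consistent with the paper, which only invokes them later in Proposition \ref{P3.10}.
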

\begin{proof}
It is obvious that $\mathbbm{J}_{\underline{c}}$ is invertible with inverse $\mathbbm{J}^{-1}_{\underline{c}}=\sum_{\mathbbm{f,g}\in \mathbbm{G}}J^{-1}_{\underline{c}}(\mathbbm{f,g})\mathbbm{1}_\mathbbm{f}\otimes \mathbbm{1}_\mathbbm{g}.$
Next we verify that $(\varepsilon\otimes id)(\mathbbm{J}_{\underline{c}})=(id\otimes \varepsilon)(\mathbbm{J}_{\underline{c}})=1$ holds.
Suppose $\mathbbm{g}=\prod_{i=1}^n\mathbbm{g}_i^{k_i}$ for some $0\leq k_i\leq \mathbbm{m}_i, 1\leq i\leq n.$  Then  the following equations hold:
\begin{eqnarray*}
\varepsilon(\mathbbm{1}_\mathbbm{g})&=&\frac{1}{|\mathbbm{G}|}\varepsilon(\sum_{\mathbbm{h}\in \mathbbm{G}}\chi_{\mathbbm{g}}(\mathbbm{h})\mathbbm{h})=\frac{1}{|\mathbbm{G}|}\sum_{\mathbbm{h}\in \mathbbm{G}}\chi_{\mathbbm{g}}(\mathbbm{h})\\
&=& \frac{1}{|\mathbbm{G}|}\sum_{0\leq l_j\leq \mathbbm{m}_j-1,1\leq j\leq n}(\prod_{i=1}^n\zeta_{\mathbbm{m}_i}^{k_il_i})\\
&=& \frac{1}{|\mathbbm{G}|}\prod_{i=1}^n(\sum_{0\leq l_i\leq \mathbbm{m}_i-1}\zeta_{\mathbbm{m}_i}^{k_il_i}).
\end{eqnarray*}
 It follows that
\begin{equation*}
\varepsilon(\mathbbm{1}_{\mathbbm{h}})=\left\{
                                         \begin{array}{ll}
                                           0, & \hbox{if $\mathbbm{h}\neq 1$;} \\
                                           1, & \hbox{if $\mathbbm{h}=1$.}
                                         \end{array}
                                       \right.
\end{equation*}
Hence,  $(\varepsilon\otimes id)(\mathbbm{J}_{\underline{c}})=\sum_{\mathbbm{g}\in \mathbbm{G}}J_{\underline{c}}(1,\mathbbm{g})\mathbbm{1}_{\mathbbm{g}}=\sum_{\mathbbm{g}\in \mathbbm{G}}\mathbbm{1}_{\mathbbm{g}}=1.$
Similarly,  the equation: $( id\otimes\varepsilon)(\mathbbm{J}_{\underline{c}})=1$ holds.
\end{proof}
Since $H$ is a Hopf algebra, we can view it as a quasi-Hopf algebras with the trivial associator $\Phi=1\otimes 1\otimes 1$ and the usual  antipode $(S, 1,1)$.
According to Subsection 2.1,  we can construct a quasi-Hopf algebra
$H^{\mathbbm{J}_{\underline{c}}}=(H^{\mathbbm{J}_{\underline{c}}},\bigtriangleup_{\mathbbm{J}_{\underline{c}}},\varepsilon,\Phi_{\mathbbm{J}_{\underline{c}}},
S_{\mathbbm{J}_{\underline{c}}},\beta_{\mathbbm{J}_{\underline{c}}}\alpha_{\mathbbm{J}_{\underline{c}}},1).$
The associator of $H^{\mathbbm{J}_{\underline{c}}}$  can be explicitly described as follows:

\begin{lemma}\label{L3.7}
\begin{equation}\label{3.22}
\Phi_{\mathbbm{J}_{\underline{c}}}=\sum_{f,g,h\in G}\phi_{\underline{c}}(f,g,h)1_f\otimes 1_g\otimes 1_h
\end{equation}
\end{lemma}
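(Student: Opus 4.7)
The plan is to apply the twist formula (2.3) directly. Since $H$ is an ordinary Hopf algebra, its associator is $\Phi = 1\otimes 1\otimes 1$, so
$$\Phi_{\mathbbm{J}_{\underline{c}}} \;=\; (1\otimes \mathbbm{J}_{\underline{c}})\,(id\otimes \bigtriangleup)(\mathbbm{J}_{\underline{c}})\,(\bigtriangleup\otimes id)(\mathbbm{J}_{\underline{c}}^{-1})\,(\mathbbm{J}_{\underline{c}}\otimes 1)^{-1}.$$
Since the $\mathbbm{1}_{\mathbbm{g}}$ are a complete set of primitive orthogonal idempotents of $\k[\mathbbm{G}]\subseteq H$, one has $\bigtriangleup(\mathbbm{1}_{\mathbbm{g}}) = \sum_{\mathbbm{x}\mathbbm{y}=\mathbbm{g}}\mathbbm{1}_{\mathbbm{x}}\otimes \mathbbm{1}_{\mathbbm{y}}$, and a standard direct evaluation (regrouping the four factors by tensor slot and using orthogonality) produces
$$\Phi_{\mathbbm{J}_{\underline{c}}} \;=\; \sum_{\mathbbm{f},\mathbbm{g},\mathbbm{h}\in \mathbbm{G}}(\partial J_{\underline{c}})(\mathbbm{f},\mathbbm{g},\mathbbm{h})\;\mathbbm{1}_{\mathbbm{f}}\otimes \mathbbm{1}_{\mathbbm{g}}\otimes \mathbbm{1}_{\mathbbm{h}},$$
where $\partial J_{\underline{c}}$ denotes the standard group-cochain coboundary of the 2-cochain $J_{\underline{c}}$ on $\mathbbm{G}$.

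Next, I would compute $\partial J_{\underline{c}}$ explicitly from the definition (3.15). The key arithmetic inputs are the identity $x_l - x_l' = m_l\lfloor x_l/m_l\rfloor$ together with $\zeta_{\mathbbm{m}_l}^{m_l} = \zeta_{m_l}$ and $\zeta_{m_sm_t}^{m_s} = \zeta_{m_t}$. Once unpacked, every factor in $\partial J_{\underline{c}}(\mathbbm{f},\mathbbm{g},\mathbbm{h})$ depends only on the exponents of $\mathbbm{f},\mathbbm{g},\mathbbm{h}$ taken modulo $m_l$, so the coboundary factors through the reduction map $\mathbbm{G}\to G$ sending $\mathbbm{g}_l \mapsto g_l$. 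Comparing term by term with the explicit formula for $\phi_{\underline{c}}$ appearing just before Proposition 2.16, and recalling that the triple-index contribution $c_{rst}$ vanishes by the defining constraint $\underline{c}\in \Gamma(\mathfrak{D})$, one obtains
$$(\partial J_{\underline{c}})(\mathbbm{f},\mathbbm{g},\mathbbm{h}) \;=\; \phi_{\underline{c}}(\bar{\mathbbm{f}},\bar{\mathbbm{g}},\bar{\mathbbm{h}}),$$
where $\bar{\cdot}:\mathbbm{G}\twoheadrightarrow G$ denotes the reduction mod $m_l$ on each factor.

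To finish, I partition the triple sum over $\mathbbm{G}^3$ according to the $G^3$-image of the indices. Because the summand depends only on $(\bar{\mathbbm{f}},\bar{\mathbbm{g}},\bar{\mathbbm{h}})$, each fiber sum of idempotents collapses via Lemma 3.2 to the corresponding $1_f\in \k G$, and the resulting expression is exactly the right-hand side of the claimed identity (3.22). The main obstacle is the middle step, namely the explicit matching of $\partial J_{\underline{c}}$ with $\phi_{\underline{c}}$: the cochain $J_{\underline{c}}$ is written in terms of the residue differences $x_l' - x_l$, whereas $\phi_{\underline{c}}$ is written in terms of carry terms $\lfloor (i_l + j_l)/m_l\rfloor$, so converting one presentation into the other requires careful bookkeeping of the interplay between the single-index contributions (indexed by $c_l$) and the pair-index contributions (indexed by $c_{st}$), and in particular verifying that the mixed base-change identities $\zeta_{\mathbbm{m}_l}^{m_l}=\zeta_{m_l}$ and $\zeta_{m_sm_t}^{m_s}=\zeta_{m_t}$ produce the correct roots of unity in $\phi_{\underline{c}}$.
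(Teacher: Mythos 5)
Your proposal follows essentially the same route as the paper's proof: compute $\Delta(\mathbbm{1}_{\mathbbm{g}})=\sum_{\mathbbm{x}\mathbbm{y}=\mathbbm{g}}\mathbbm{1}_{\mathbbm{x}}\otimes\mathbbm{1}_{\mathbbm{y}}$, reduce the twist formula to $\sum_{\mathbbm{f},\mathbbm{g},\mathbbm{h}}\partial(J_{\underline{c}})(\mathbbm{f},\mathbbm{g},\mathbbm{h})\,\mathbbm{1}_{\mathbbm{f}}\otimes\mathbbm{1}_{\mathbbm{g}}\otimes\mathbbm{1}_{\mathbbm{h}}$, verify by the carry-term bookkeeping that $\partial(J_{\underline{c}})$ depends only on the residues modulo $m_l$ and equals $\phi_{\underline{c}}$ of the reductions, and then collapse the fibers via Lemma 3.2. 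The argument is correct and matches the paper's proof step for step.
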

\begin{proof}
First of all, we need to verify  the comultiplication  of  the element $\mathbbm{1}_{\mathbbm{g}}$ for every $\mathbbm{g}\in \mathbbm{G}$:
\begin{equation}\label{3.16}
\bigtriangleup(\mathbbm{1}_{\mathbbm{g}})=\sum_{\mathbbm{fh}=\mathbbm{g}}\mathbbm{1}_{\mathbbm{f}}\otimes \mathbbm{1}_{\mathbbm{h}}.
\end{equation}
Indeed, we have:
\begin{eqnarray*}
\sum_{\mathbbm{fh}=\mathbbm{g}}\mathbbm{1}_{\mathbbm{f}}\otimes \mathbbm{1}_{\mathbbm{h}}&=&\frac{1}{|\mathbbm{G}|^2}\sum_{\mathbbm{fh}=\mathbbm{g}}\sum_{\mathbbm{x}\in \mathbbm{G}}\chi_{\mathbbm{f}}(\mathbbm{x})\mathbbm{x}\otimes \sum_{\mathbbm{y}\in \mathbbm{G}}\chi_{\mathbbm{h}}(\mathbbm{y})\mathbbm{y} \\
&=&\frac{1}{|\mathbbm{G}|^2}\sum_{\mathbbm{x,y}\in \mathbbm{G}}[\sum_{\mathbbm{fh}=\mathbbm{g}}\chi_{\mathbbm{f}}(\mathbbm{x})\chi_{\mathbbm{h}}(\mathbbm{y})\mathbbm{x}\otimes \mathbbm{y}]\\
&=&\frac{1}{|\mathbbm{G}|}\sum_{\mathbbm{x}\in \mathbbm{G}}\chi_{\mathbbm{g}}(\mathbbm{x})\mathbbm{x}\otimes \mathbbm{x}\\
&=&\bigtriangleup(\mathbbm{1}_{\mathbbm{g}}),
\end{eqnarray*}
where the third identity follows from the equation:
\begin{equation*}
\sum_{\mathbbm{fh}=\mathbbm{g}}\chi_{\mathbbm{f}}(\mathbbm{x})\chi_{\mathbbm{h}}(\mathbbm{y})=\left\{
                                                                                                \begin{array}{ll}
                                                                                                  0, & \hbox{if $\mathbbm{x}\neq \mathbbm{y}$;} \\
                                                                                                  |\mathbbm{G}|\chi_{\mathbbm{g}}(\mathbbm{x}), &
                                                                                                 \hbox{if $\mathbbm{x}=\mathbbm{y}$.}
                                                                                                \end{array}
                                                                                              \right.
\end{equation*}
Hence, it yields:
\begin{eqnarray*}
\Phi_{\mathbbm{J}_{\underline{c}}}&=&(1\otimes \mathbbm{J}_{\underline{c}})(id\otimes \bigtriangleup)(\mathbbm{J}_{\underline{c}})(\bigtriangleup\otimes id)(\mathbbm{J}_{\underline{c}}^{-1})(\mathbbm{J}_{\underline{c}}\otimes 1)^{-1}\\
&=&\sum_{\mathbbm{f,g,h}\in \mathbbm{G}}\frac{J_{\underline{c}}(\mathbbm{g},\mathbbm{h})J_{\underline{c}}(\mathbbm{f},\mathbbm{gh})}
{J_{\underline{c}}(\mathbbm{f},\mathbbm{g})J_{\underline{c}}(\mathbbm{fg},\mathbbm{h})}\mathbbm{1}_{\mathbbm{f}}\otimes \mathbbm{1}_{\mathbbm{g}}\otimes \mathbbm{1}_{\mathbbm{h}}\\
&=&\sum_{\mathbbm{f,g,h}\in \mathbbm{G}}\partial(J_{\underline{c}})(\mathbbm{f,g,h})\mathbbm{1}_{\mathbbm{f}}\otimes \mathbbm{1}_{\mathbbm{g}}\otimes \mathbbm{1}_{\mathbbm{h}}.
\end{eqnarray*}
Now suppose $\mathbbm{f}=\prod_{i=1}^n\mathbbm{g}_i^{x_im_i+r_i},\ \mathbbm{g}=\prod_{i=1}^n\mathbbm{g}_i^{y_im_i+s_i},
\   \mathbbm{h}=\prod_{i=1}^n\mathbbm{g}_i^{z_im_i+t_i}$ for $0\leq x_i,y_i,z_i,r_i$, $s_i,t_i\leq m_i-1, 1\leq i\leq n.$  Let $f=\prod_{i=1}^ng_i^{r_i},g=\prod_{i=1}^ng_i^{s_i},\
h=\prod_{i=1}^ng_i^{t_i}$.   We compute  the element $\partial(J_{\underline{c}})(\mathbbm{f,g,h})$:
\begin{eqnarray*}
\partial(J_{\underline{c}})(\mathbbm{f,g,h})&=&
\prod_{i=1}^n\zeta_{\mathbbm{m}_i}^{c_it_i[\frac{r_i+s_i}{m_i}]m_i}\prod_{1\leq j<k\leq n}\zeta_{m_jm_k}^{c_{jk}t_k[\frac{r_j+s_j}{m_j}]m_j}\\
&=&\phi_{\underline{c}}(f,g,h).
\end{eqnarray*}
Applying  Lemma \ref{L3.2},  we obtain:
\begin{eqnarray*}
\Phi_{\mathbbm{J}_{\underline{c}}}&=&\sum_{0\leq x_l,y_l,z_l\leq m_l-1,1\leq l\leq n}\sum_{0\leq r_l,s_l,t_l\leq m_l-1,1\leq l\leq n}
\prod_{i=1}^n\zeta_{\mathbbm{m}_i}^{c_it_i[\frac{r_i+s_i}{m_i}]m_i}\prod_{1\leq j<k\leq n}\zeta_{m_jm_k}^{c_{jk}t_k[\frac{r_j+s_j}{m_j}]m_j}\\
&&\ \ \ \ \ \ \ \ \ \ \ \ \ \ \ \ \ \ \ \ \ \ \ \ \ \ \ \ \ \ \ \ \ \ \ \ \times \mathbbm{e}_{(\prod_{i=1}^n\mathbbm{g}_i^{x_im_i+r_i})}\otimes \mathbbm{e}_{(\prod_{i=1}^n\mathbbm{g}_i^{y_im_i+s_i})}\otimes \mathbbm{e}_{(\prod_{i=1}^n\mathbbm{g}_i^{z_im_i+t_i})}\\
&=&\sum_{0\leq r_l,s_l,t_l\leq m_l-1,1\leq l\leq n}\prod_{i=1}^n\zeta_{\mathbbm{m}_i}^{c_it_i[\frac{r_i+s_i}{m_i}]m_i}\prod_{1\leq j<k\leq n}\zeta_{m_jm_k}^{c_{jk}t_k[\frac{r_j+s_j}{m_j}]m_j} \\
&&\ \ \ \ \ \ \ \ \ \ \ \ \ \ \ \ \ \ \ \ \ \ \ \ \ \ \ \ \ \ \ \ \ \ \ \ \ \ \ \ \ \ \ \ \ \ \ \ \ \ \ \ \ \ \ \ \times 1_{(\prod_{i=1}^ng_i^{r_i})}\otimes 1_{(\prod_{i=1}^ng_i^{s_i})}\otimes 1_{(\prod_{i=1}^ng_i^{t_i})}\\
&=&\sum_{f,g,h\in G}\phi_{\underline{c}}(f,g,h)1_f\otimes 1_g\otimes 1_h,
\end{eqnarray*}
as desired.
\end{proof}

\begin{lemma}\label{L3.8}
$\mathbf{u}_\alpha(\mu)\in \k G$ for all $\alpha\in R^+.$
\end{lemma}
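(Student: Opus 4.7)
The plan is to prove the stronger statement that $u^a \in \k G$ for every nonzero $a \in \mathbb{N}^P$; the lemma then follows at once from $\mathbf{u}_\alpha(\mu) = u^{e_l}$ with $\alpha = \beta_l \in R^+$. The proof will be by strong induction on the height $ht(\underline{a})$, split according to whether $a$ is a standard basis vector $e_l$ or not.

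In the case $a \neq e_l$ for every $l$, choose $l$ to be the largest index with $a_l > 0$. Then $a$ has the form $(a_1, \ldots, a_l, 0, \ldots, 0)$ with $a_l \geq 1$ and $a \neq e_l$, which is exactly the hypothesis of Proposition \ref{P2.9}. Applying it yields $u^a = u^{a - e_l}\, u^{e_l}$. Both $a - e_l$ and $e_l$ have strictly smaller height than $a$, so the inductive hypothesis puts each factor in $\k G$; since $\k G$ is a subalgebra of $\k\mathbbm{G}$, their product also lies in $\k G$.

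In the case $a = e_l$, apply the defining formula \eqref{2.17}: $u^{e_l} = \mu_l(1 - h_{\beta_l}^N) + \sum_{b,c\neq 0,\, \underline{b}+\underline{c}=\beta_l} t^{e_l}_{b,c}\, \mu_b u^c$. Each $\underline{c}$ in the sum is strictly smaller than $\beta_l$ (since $\underline{b} \neq 0$), hence has smaller height; by induction $u^c \in \k G$, so the whole sum lies in $\k G$. For the leading term, the modified root-vector-parameter condition \eqref{3.13} guarantees $\mu_l = 0$ whenever $h_{\beta_l}^N \notin G$, so $\mu_l(1 - h_{\beta_l}^N) \in \k G$ in either case. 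When $\beta_l$ is simple the sum is in fact empty, since no simple root decomposes as $\underline{b}+\underline{c}$ with $\underline{b},\underline{c}$ two nonzero nonnegative combinations of positive roots; this recovers the base case $ht(\underline{a}) = 1$.

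The argument is essentially bookkeeping: the only substantive ingredient is the modification of the root-vector-parameter condition in \eqref{3.13}, introduced precisely so that $h_{\beta_l}^N$ escaping $G$ forces the corresponding $\mu_l$ to vanish. No genuine obstacle is anticipated; the mild point to verify is simply that choosing $l$ as the largest nonzero coordinate automatically puts $a$ into the shape required by Proposition \ref{P2.9}, which is immediate.
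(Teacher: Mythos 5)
Your proof is correct and follows essentially the same route as the paper's: induction on $ht(\underline{a})$, using Proposition \ref{P2.9} to factor $u^a=u^{a-e_l}u^{e_l}$ when $a$ is not a basis vector, and the recursion \eqref{2.17} together with the modified root-vector condition \eqref{3.13} to handle the remaining case. The only cosmetic difference is that the paper organizes the case split by whether $\underline{a}$ equals some positive root $\beta_{j_s}$ rather than by whether $a=e_l$, and works one connected component at a time; both variants rely on exactly the same two ingredients.
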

\begin{proof}
Since $R^+=\cup_{J\in \Omega}R_J^+,$  it suffices  to show  $\mathbf{u}_\alpha(\mu)\in \k G$ for any $\alpha\in R_J^+$ with a fixed $J\in \Omega.$ Suppose that $J=\{i_1,\cdots,i_\eta\}\subset I,$ and $\{\alpha_{i_1},\cdots,\alpha_{i_\eta}\}$ is the set of  the simple roots corresponding to the vertexes of $J.$ Let $w_J=s_{j_1}s_{j_2}\cdots s_{j_{P_J}}$ be the reduced presentation of the longest element of  the Weyl group $W_J$ in terms of simple reflections. Define
\begin{equation}\label{3.26}
\beta_{j_l}=s_{j_1}s_{j_2}\cdots s_{j_{l-i}}(\alpha_{j_{l}})\ \mathrm{for}\ 1\leq l\leq P_J
\end{equation}
and
\begin{equation}\label{3.27}
\underline{a}=a_1\beta_{j_i}+a_2\beta_{j_2}+\cdots+a_{P_J}\beta_{j_{P_J}}, a\in \mathbbm{N}^{P_J}.
\end{equation}
We  show that  $u^a\in \k G$ for each $a\in \mathbbm{N}^{P_J}$. Consequently,  it leads to  $\mathbf{u}_\alpha(\mu)\in \k G$ for all $\alpha\in R_J^+$. We will prove it by induction on $ht(\underline{a}).$

In case $ht(\underline{a})=1,$ then $\underline{a}$ is a simple root contained in $\{\alpha_{i_1},\cdots,\alpha_{i_\eta}\},$ say, $\alpha_{i_k}.$
By \eqref{2.17}, we have $u^a=\mu_a(1-h^a)=h_{a_{i_k}}^{N_J}.$  It follows from \ref{3.13}  that $u^a=\mu_a(1-h^a)\in \k G.$

Now assume  that $u^a\in \k G$ holds for all $\underline{a}\in \mathbbm{N}^{P_J}$ such that $ht(\underline{a})<l$.  Let $a\in \mathbbm{N}^{P_J}$ such that $ht(\underline{a})=l.$ If $\underline{a}=\beta_{j_s}$ for some $1\leq s\leq P_J,$ then
\begin{equation*}
u^a=\mu_a(1-h^a)+\sum_{b,c\neq 0,\underline{b}+\underline{c}=\underline{a}}t^a_{b,c}\mu_bu^c,
\end{equation*} and $h^a=h_{\beta_{j_s}}^{N_J}.$ From \ref{3.13} we see that  the part $\mu_a(1-h^a)\in \k G$.  The fact that second part $\sum_{b,c\neq 0,\underline{b}+\underline{c}=\underline{a}}t^a_{b,c}\mu_bu^c$ belongs to $\k G$ follows from the induction assumption. If $\underline{a}\neq\beta_{j_s}$ for any $1\leq s\leq P_J,$ then $a=(a_1,a_2,\cdots,a_s,\cdots,0)$ with $ a_s>0$ for some $ 1\leq s\leq P_J.$ Let $$e_s=(\underbrace{0,\cdots, 1}_s,0,\cdots,0).$$
By Proposition \ref{P2.9}, we have $u^a=u^{a-e_s}u^{e_s}.$ Since  both heights of $a-e_s$ and $e_s$ are  less than  $l$,   the elements $u^{a-e_s},u^{e_s}$ belong to $\k G$ by the induction assumption. This implies  that $u^a\in \k G.$
\end{proof}

Now we denote by  $A(H,\underline{c})$  the subalgebra of $H^{\mathbbm{J}_{\underline{c}}}$ generated by $G$ and $\{X_1,\cdots,X_\theta\}.$  We are going to show that $A(H,\underline{c})$ is  the desired quasi-Hopf algebra if we choose an approriate element $\underline{c}$.  We first describe the defining relations  of the generators of $A(H,\underline{c})$.

\begin{proposition}\label{p3.11} The algebra $A(H,\underline{c})$ can be presented by the generators $G$ and $\{X_1,\cdots,X_\theta\}$ and  the following relations:
\begin{eqnarray}
&&gX_ig^{-1}=\chi_i(g)X_i,\ \mathrm{for\ all}\ 1\leq i\leq \theta, g\in G,\\ 
&&ad_C(X_i)^{1-a_{ij}}(X_j)=0,\ \mathrm{for\ all}\ i\neq j, i\sim j, \\
&&ad_C(X_i)(X_j)=\lambda_{ij}(1-h_ih_j),\ \mathrm{for\ all}\ i<j, i\nsim j,\\ 
&&X_\alpha^{N_J}=\mathbf{u}_{\alpha}(\mu), \ \mathrm{for\ all}\ \alpha\in R_J^+, J\in \Omega. 
\end{eqnarray}
Moreover, $A(H,\underline{c})$ has a basis of the form
$$X_{\beta_1}^{x_1}X_{\beta_2}^{x_2}\cdots X_{\beta_P}^{x_P}g,\ g\in G,\ 0\leq x_i\leq N_J,\ \beta_i\in R_J^+.$$
\end{proposition}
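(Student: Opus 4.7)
Since twisting by a $2$-cochain modifies only the comultiplication, associator, and antipode, the algebra underlying $H^{\mathbbm{J}_{\underline{c}}}$ coincides with $H = u(\mathfrak{D},\lambda,\mu)$. Hence the task is to describe the subalgebra $A(H,\underline{c})\subseteq H$ generated by $G$ and $\{X_1,\ldots,X_\theta\}$ by generators and relations, and then to exhibit the claimed PBW-type basis.

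The first step is to check that the four families of relations stated in the proposition already hold in $A(H,\underline{c})$. The action relations specialize the AS-Hopf relations $gX_ig^{-1}=\chi_i(g)X_i$ from $g\in\mathbbm{G}$ to $g\in G$, and the Serre relations involve only the $X_i$'s, so both hold verbatim. For the linking relations, the modified condition \eqref{3.12} forces $\lambda_{ij}=0$ whenever $h_ih_j\notin G$, so the right hand side $\lambda_{ij}(1-h_ih_j)$ automatically lies in $\k G$. For the root vector relations, Lemma \ref{L3.8} gives $\mathbf{u}_\alpha(\mu)\in\k G$. This yields a canonical surjection $\pi:\widetilde{A}\twoheadrightarrow A(H,\underline{c})$ from the algebra $\widetilde{A}$ abstractly presented by the generators and relations of the proposition.

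The next step is PBW normalization inside $\widetilde{A}$, mirroring the argument of \cite{AS2} for $u(\mathfrak{D},\lambda,\mu)$: the action relations push group elements to the right; the Serre and linking relations together with the braided commutators of Subsection~\ref{s2.3} straighten arbitrary $X$-monomials into ordered products of root vectors $X_{\beta_1}^{x_1}\cdots X_{\beta_P}^{x_P}$ along the convex order on $R^+$; and the root vector relations restrict each exponent $x_i$ below $N_J$. Crucially, the modified-parameter hypotheses \eqref{3.12}--\eqref{3.13} and Lemma \ref{L3.8} ensure that every intermediate group-theoretic contribution stays in $G$ rather than escaping to $\mathbbm{G}\setminus G$, so the reduction never leaves $\widetilde{A}$. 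Consequently the monomials $X_{\beta_1}^{x_1}\cdots X_{\beta_P}^{x_P}g$ with $g\in G$ and $0\leq x_i<N_J$ span $\widetilde{A}$.

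Linear independence of these monomials is inherited from $H$: they form a subset of the PBW basis of $u(\mathfrak{D},\lambda,\mu)$ recorded in Theorem \ref{T2.13}, obtained by restricting the group part from $\mathbbm{G}$ to $G$, and are therefore independent in $H$ and a fortiori in $A(H,\underline{c})$. Combining spanning with independence upgrades $\pi$ to an isomorphism, proving both the presentation and the basis simultaneously. The main point at which care is required is the closure property just mentioned; guaranteeing that no reduction step secretly needs a group element outside $G$ is precisely what the modified-parameter conditions and Lemma \ref{L3.8} are engineered to do, so once they are invoked at the appropriate spots the remainder of the argument is a direct transcription of the standard PBW normalization for the ambient AS-Hopf algebra.
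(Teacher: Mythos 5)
Your proposal is correct and follows essentially the same route as the paper: verify the four families of relations hold in $A(H,\underline{c})$ (the action and Serre relations descend from $H$, the linking relations land in $\k G$ by the modified condition \eqref{3.12}, and the root vector relations land in $\k G$ by Lemma \ref{L3.8}), then obtain the PBW basis from the Andruskiewitsch--Schneider basis of $H$ restricted to the group part $G$. Your write-up is in fact more explicit than the paper's own two-line argument, spelling out the surjection from the abstractly presented algebra and the spanning-plus-independence step that the paper delegates to a citation of \cite{AS2}.
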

\begin{proof}
Since the relations (3.34)-(3.35) hold in $H$ for the group $\mathbbm{G}$ and the generators $X_1, \cdots X_\theta$,  they hold as well for the subgroup $G$ and $X_1, \cdots, X_\theta$.  Relation  (3.37) follows from Lemma \ref{L3.8}. For the relation (3.36),  it is enought to show that  the elements   $\lambda_{ij}(1-h_ih_j)$  fall in $\k G$.  But this is true because of \eqref{3.12}.
The last part of the proposition follows from \cite[Theorem 3.3]{AS2} and Relation \eqref{3.22}.
\end{proof}
The algebra  $A(H,\underline{c})$ is apparently not a Hopf subalgebra of $H$. However, it is a quasi-Hopf subalgebra of  some twist of $H$.

\begin{proposition}\label{P3.10}
$A(H,\underline{c})$ is a quasi-Hopf subalgebra of $H^{\mathbbm{J}_{\underline{c}}}$ if and only if $\underline{c}\in \Gamma(\mathfrak{D}).$
\end{proposition}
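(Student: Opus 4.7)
The task is to check closure of $A(H,\underline{c})$ under each piece of the quasi-Hopf structure of $H^{\mathbbm{J}_{\underline{c}}}$, and to pin down which arithmetic constraints on $\underline{c}$ this closure forces. The associator $\Phi_{\mathbbm{J}_{\underline{c}}}$ already lies in $(\k G)^{\otimes 3}\subset A(H,\underline{c})^{\otimes 3}$ by Lemma~\ref{L3.7}, so it contributes no condition. For $g\in G$ the twist acts trivially (since $g$ commutes with every $\mathbbm{1}_\mathbbm{f}$), giving $\bigtriangleup_{\mathbbm{J}_{\underline{c}}}(g)=g\otimes g$, again with no constraint. The substance of the proof is therefore in the action of $\bigtriangleup_{\mathbbm{J}_{\underline{c}}}$, $\mathcal{S}_{\mathbbm{J}_{\underline{c}}}$ and the evaluation at $\alpha_{\mathbbm{J}_{\underline{c}}}$ on the skew-primitives $X_l$.

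The central computation is $\bigtriangleup_{\mathbbm{J}_{\underline{c}}}(X_l)=\mathbbm{J}_{\underline{c}}(X_l\otimes 1+h_l\otimes X_l)\mathbbm{J}_{\underline{c}}^{-1}$. Using Lemma~\ref{L3.5} to commute $X_l$ past the idempotents (which shifts indices by $\eta_l$), together with $h_l\mathbbm{1}_\mathbbm{f}=\chi_\mathbbm{f}^{-1}(h_l)\mathbbm{1}_\mathbbm{f}$, one expands $\bigtriangleup_{\mathbbm{J}_{\underline{c}}}(X_l)$ as a double sum over $\mathbbm{G}\times\mathbbm{G}$ whose coefficients are explicit ratios of $J_{\underline{c}}$-values together with characters evaluated on $h_l$. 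Since $A(H,\underline{c})\cap \k\mathbbm{G}=\k G$, Lemma~\ref{L3.2} shows that this sum descends to $A(H,\underline{c})^{\otimes 2}$ precisely when, in each tensor slot, the coefficient is constant along the $G$-cosets in $\mathbbm{G}$. Writing $\mathbbm{f}=\prod\mathbbm{g}_i^{x_i}$, $\mathbbm{g}=\prod\mathbbm{g}_i^{y_i}$ and expanding \eqref{3.15}, these constancy conditions become divisibility statements in the exponents which, compared with the structure constants $(s_{ij},r_{ij})$ defined in \eqref{3.5}, are exactly the congruences \eqref{3.7}, \eqref{3.8} and \eqref{3.9} cutting out $\Gamma(\mathfrak{D})$. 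The surviving sum then matches \eqref{3.18}, and its coefficients identify with $\Psi_l$ and $\Theta_l$ from \eqref{e3.14}. An entirely parallel computation for $\alpha_{\mathbbm{J}_{\underline{c}}}$ and $\mathcal{S}_{\mathbbm{J}_{\underline{c}}}(X_l)$, applying \eqref{2.1} and \eqref{2.4} and then Lemma~\ref{L3.2}, recovers $\Upsilon$ and $\mathcal{F}_i$ from \eqref{e3.15}, \eqref{e3.16} under the same conditions. For necessity, the PBW-type basis of Proposition~\ref{p3.11} ensures that the coefficients in the expansion are linearly independent in $H$, so any violation of \eqref{3.7}-\eqref{3.9} leaves a genuine $\mathbbm{G}$-component outside $A(H,\underline{c})^{\otimes 2}$.

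The main obstacle will be the bookkeeping in the coefficient computation: parsing the remainder expressions $(x_i'-x_i)$ in the definition of $J_{\underline{c}}$ when the first variable is shifted by $\eta_l=\prod_j\mathbbm{g}_j^{r_{lj}}$, and separating the resulting exponent into three pieces that correspond respectively to the diagonal term (producing \eqref{3.7}), the off-diagonal term indexed by $j>i$ (producing \eqref{3.8}), and the cross term involving the product $m_im_j$ (producing \eqref{3.9}). Once this decomposition is made transparent, the identification with the formulas of Theorem~\ref{T3.4} is direct and both directions of the equivalence follow.
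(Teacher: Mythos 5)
Your proposal follows essentially the same route as the paper's proof: conjugate $\bigtriangleup(X_l)$ and $S(X_l)$ by $\mathbbm{J}_{\underline{c}}$, use Lemmas \ref{L3.1}, \ref{L3.5} and \ref{L3.2} to reduce everything to whether the resulting coefficient functions on $\mathbbm{G}\times\mathbbm{G}$ are constant on cosets of $\iota(G)$, and read off the congruences \eqref{3.7}--\eqref{3.9} defining $\Gamma(\mathfrak{D})$. The only difference is that you justify the ``only if'' direction via linear independence of the elements $X_i\mathbbm{1}_{\mathbbm{f}}$ coming from the PBW basis, a detail the paper omits; this is a correct filling-in rather than a different approach.
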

\begin{proof}
$\Leftarrow.$ First of all, we show that $A(H,\underline{c})$ is closed under the comultiplication $\bigtriangleup_{\mathbbm{J}_{\underline{c}}}$ of  $H^{\mathbbm{J}_{\underline{c}}}.$
It is obvious that $\bigtriangleup_{\mathbbm{J}_{\underline{c}}}(g)=\mathbbm{J}_{\underline{c}}(g\otimes g)\mathbbm{J}^{-1}_{\underline{c}}=g\otimes g$ for any $g\in G\subset \mathbbm{G}$ since $\mathbbm{G}$ is abelian.  It remains to show that
$\bigtriangleup_{\mathbbm{J}_{\underline{c}}}(X_i)\in A(H,\underline{c})\otimes A(H,\underline{c})$ for $1\leq i\leq \theta.$ By Lemma \ref{L3.1} and \ref{L3.5}, we have
\begin{eqnarray*}
\bigtriangleup_{\mathbbm{J}_{\underline{c}}}(X_i)&=&\mathbbm{J}_{\underline{c}}(X_i\otimes 1+h_i\otimes X_i)\mathbbm{J}^{-1}_{\underline{c}}  \\
&=&(\sum_{\mathbbm{f,g}\in\mathbbm{G}}J_{\underline{c}}(\mathbbm{f,g})\mathbbm{1}_{\mathbbm{f}}\otimes \mathbbm{1}_{\mathbbm{g}})(X_i\otimes 1+h_i\otimes X_i)(\sum_{\mathbbm{f,g}\in\mathbbm{G}}J^{-1}_{\underline{c}}(\mathbbm{f,g})\mathbbm{1}_{\mathbbm{f}}\otimes \mathbbm{1}_{\mathbbm{g}})\notag\\
&=&\sum_{\mathbbm{f,g}\in\mathbbm{G}}[\frac{J_{\underline{c}}(\mathbbm{f}\eta_i^{-1},\mathbbm{g})}{J_{\underline{c}}(\mathbbm{f,g})}
X_i\mathbbm{1}_{\mathbbm{f}}\otimes \mathbbm{1}_{\mathbbm{g}}+\frac{J_{\underline{c}}(\mathbbm{f},\mathbbm{g}\eta_i^{-1})}{J_{\underline{c}}(\mathbbm{f,g})}
\chi_{\mathbbm{f}}^{-1}(h_i)\mathbbm{1}_{\mathbbm{f}}\otimes X_i\mathbbm{1}_{\mathbbm{g}}].\notag
\end{eqnarray*}
Suppose $\mathbbm{f}=\prod_{j=1}^{n}\mathbbm{g}_j^{x_jm_j+k_j}, \mathbbm{g}=\prod_{j=1}^{n}\mathbbm{g}_j^{y_jm_j+l_j}$ for $0\leq x_j,y_j,k_j,l_j\leq m_j-1, 1\leq j\leq n.$ Let $(k_j-r_{ij})'$ be the remainder of $(k_j-r_{ij})$ divided by $m_j$ for $1\leq 1\leq j\leq n,$ and define
\begin{equation}
\psi_{ij}=\left\{
           \begin{array}{ll}
            (k_j-r_{ij})'-(x_jm_j+k_j-r_{ij}), & \hbox{if $x_jm_j+k_j-r_{ij}\geq 0$;} \\
            (k_j-r_{ij})'-\mathbbm{m}_j-(x_jm_j+k_j-r_{ij}), & \hbox{if $x_jm_j+k_j-r_{ij}<0$.}
           \end{array}
         \right.
\end{equation} Then by \eqref{3.9} we have
\begin{eqnarray}
\frac{J_{\underline{c}}(\mathbbm{f}\eta_i^{-1},\mathbbm{g})}{J_{\underline{c}}(\mathbbm{f,g})}&=&\frac{\prod_{j=1}^{n}
\zeta_{\mathbbm{m}_j}^{c_{j}l_{j}\psi_{ij}}
\prod_{1\leq s<t\leq n}\zeta_{m_sm_t}^{c_{st}l_{t}\psi_{is}}}{\prod_{j=1}^{n}
\zeta_{\mathbbm{m}_j}^{-c_{j}l_{j}x_jm_j}
\prod_{1\leq s<t\leq n}\zeta_{m_sm_t}^{-c_{st}l_{t}x_sm_s}}\\
&=&\prod_{j=1}^{n}
\zeta_{\mathbbm{m}_j}^{c_{j}l_{j}((k_j-r_{ij})'-(k_j-r_{ij}))}
\prod_{1\leq s<t\leq n}\zeta_{m_sm_t}^{c_{st}l_{t}((k_s-r_{is})'-(k_s-r_{is}))}\notag\\
&=&\prod_{j=1}^{n}
\zeta_{\mathbbm{m}_j}^{c_{j}l_{j}\varphi_{ij}}
\prod_{1\leq s<t\leq n}\zeta_{m_sm_t}^{c_{st}l_{t}\varphi_{is}}.\notag
\end{eqnarray}
Similarly,  by \eqref{3.7} \and \eqref{3.8}, we obtain
\begin{eqnarray}
\frac{J_{\underline{c}}(\mathbbm{f},\mathbbm{g}\eta_i^{-1})}{J_{\underline{c}}(\mathbbm{f,g})}\chi_{\mathbbm{f}}^{-1}(h_i)&=&
J_{\underline{c}}(\mathbbm{f},\eta_i^{-1})\chi_{\mathbbm{f}}^{-1}(h_i)\\
&=&\prod_{j=1}^{n}\zeta_{\mathbbm{m}_j}^{c_{j}r_{ij}x_jm_j)}
\prod_{1\leq s<t\leq n}\zeta_{m_sm_t}^{c_{st}r_{it}(x_sm_s)}\prod_{j=1}^{n}\zeta_{\mathbbm{m}_j}^{-(x_jm_j+k_j)s_{ij}}\notag\\
&=&\prod_{j=1}^{n}\zeta_{\mathbbm{m}_j}^{(c_{j}r_{ij}-s_{ij})x_jm_j)}\prod_{1\leq s<t\leq n}\zeta_{m_sm_t}^{c_{st}r_{it}(x_sm_s)}
\prod_{j=1}^{n}\zeta_{\mathbbm{m}_j}^{-k_js_{ij}}  \notag\\
&=&\prod_{j=1}^{n}\zeta_{\mathbbm{m}_j}^{-k_js_{ij}}.\notag
\end{eqnarray}
Hence,
\begin{eqnarray}
\bigtriangleup_{\mathbbm{J}_{\underline{c}}}(X_i)&=&\sum_{0\leq x_r,y_r,k_r,l_r\leq m_r-1, 1\leq r\leq n}[\prod_{j=1}^{n}
\zeta_{\mathbbm{m}_j}^{c_{j}l_{j}\varphi_{ij}}\prod_{1\leq s<t\leq n}\zeta_{m_sm_t}^{c_{st}l_{t}\varphi_{is}}\label{3.33}\\
&&\ \ \ \ \ \ \ \ \ \ \ \ \ \ \ \ \ \ \ \ \ \ \ \ \ \ \ \ \ \ \ \   \times X_i\mathbbm{1}_{(\prod_{j=1}^{n}\mathbbm{g}_j^{x_jm_j+k_j})}\otimes \mathbbm{1}_{(\prod_{j=1}^{n}\mathbbm{g}_j^{y_jm_j+l_j})}\notag\\
&&\ \ \ \ \ \ \   +\prod_{j=1}^{n}\zeta_{\mathbbm{m}_j}^{-k_js_{ij}}\mathbbm{1}_{(\prod_{j=1}^{n}\mathbbm{g}_j^{x_jm_j+k_j})}\otimes X_i\mathbbm{1}_{(\prod_{j=1}^{n}\mathbbm{g}_j^{y_jm_j+l_j})}]\notag\\
&=&\sum_{0\leq k_r,l_r\leq m_r-1, 1\leq r\leq n}[\prod_{j=1}^{n}
\zeta_{\mathbbm{m}_j}^{c_{j}l_{j}\varphi_{ij}}\prod_{1\leq s<t\leq n}\zeta_{m_sm_t}^{c_{st}l_{t}\varphi_{is}}\notag\\
&&\ \ \ \ \ \ \ \ \ \ \ \ \ \ \ \ \ \ \ \ \ \ \ \ \ \ \ \ \ \ \ \ \ \ \ \ \ \ \ \ \ \ \ \ \ \ \ \ \times X_i1_{(\prod_{j=1}^{n}g_j^{k_j})}\otimes 1_{(\prod_{j=1}^{n}g_j^{l_j})}\notag\\
&&\ \ \ \ \ \ \ \  +\prod_{j=1}^{n}\zeta_{\mathbbm{m}_j}^{-k_js_{ij}}1_{(\prod_{j=1}^{n}g_j^{k_j})}\otimes X_i1_{(\prod_{j=1}^{n}g_j^{l_j})}].\notag \\
&=&\sum_{f,g\in G}\Psi(f,g)X_i1_f\otimes 1_g+\sum_{f\in G}\Theta_l(f)1_f\otimes X_i.\notag                                                                                                       \end{eqnarray}
The second equality follows from Lemma \ref{L3.2}. So we have proved that $\bigtriangleup_{\mathbbm{J}_{\underline{c}}}(X_i)\in A(H,\underline{c})\otimes A(H,\underline{c}),$ hence
$A(H,\underline{c})$ is closed under the comultiplication $\bigtriangleup_{\mathbbm{J}_{\underline{c}}}$ of  $H^{\mathbbm{J}_{\underline{c}}}.$

Next we will show that $(S_{\mathbbm{J}_{\underline{c}}}|_{A(H,\underline{c})},\beta_{\mathbbm{J}_{\underline{c}}}\alpha_{\mathbbm{J}_{\underline{c}}},1)$ is a antipode of $A(H,\underline{c}).$  For all $\mathbbm{g}\in \mathbbm{G},$ we have $$S(\mathbbm{1}_\mathbbm{g})=S(\frac{1}{|\mathbbm{G}|}\sum_{\mathbbm{h}\in\mathbbm{G}}\chi_{\mathbbm{g}}(\mathbbm{h})\mathbbm{h})=
\frac{1}{|\mathbbm{G}|}\sum_{\mathbbm{h}\in\mathbbm{G}}\chi_{\mathbbm{g}}(\mathbbm{h})\mathbbm{h}^{-1}=\mathbbm{1}_{\mathbbm{g}^{-1}}.$$
So we obtain
\begin{eqnarray}
&&\alpha_{\mathbbm{J}_{\underline{c}}}=\sum_{\mathbbm{f,g}\in \mathbbm{G}}J^{-1}_{\underline{c}}(\mathbbm{f,g})\mathbbm{1}_{\mathbbm{f}^{-1}} \mathbbm{1}_{\mathbbm{g}}=\sum_{\mathbbm{g}\in \mathbbm{G}}J^{-1}_{\underline{c}}(\mathbbm{g}^{-1},\mathbbm{g})\mathbbm{1}_{\mathbbm{g}},\\
&&\beta_{\mathbbm{J}_{\underline{c}}}=\sum_{\mathbbm{f,g}\in \mathbbm{G}}J_{\underline{c}}(\mathbbm{f,g})\mathbbm{1}_{\mathbbm{f}} \mathbbm{1}_{\mathbbm{g}^{-1}}=\sum_{\mathbbm{g}\in \mathbbm{G}}J_{\underline{c}}(\mathbbm{g},\mathbbm{g}^{-1})\mathbbm{1}_{\mathbbm{g}}.\label{3.40}
\end{eqnarray}
It is obvious that $\beta_{\mathbbm{J}_{\underline{c}}}$ is invertible with inverse $\sum_{\mathbbm{g}\in \mathbbm{G}}J^{-1}_{\underline{c}}(\mathbbm{g},\mathbbm{g}^{-1})\mathbbm{1}_{\mathbbm{g}},$ and we have:
\begin{eqnarray}
&&\beta_{\mathbbm{J}_{\underline{c}}}\alpha_{\mathbbm{J}_{\underline{c}}}=\sum_{\mathbbm{g}\in \mathbbm{G}}\frac{J_{\underline{c}}(\mathbbm{g},\mathbbm{g}^{-1})}{J_{\underline{c}}(\mathbbm{g}^{-1},\mathbbm{g})}
\mathbbm{1}_{\mathbbm{g}}\label{3.36}\\
&&=\sum_{0\leq k_i,l_i\leq m_i-1,1\leq i\leq n}\frac{\prod_{i=1}^n\zeta_{\mathbbm{m}_i}^{c_il_ik_im_i}\prod_{1\leq s<t\leq n}\zeta_{m_sm_t}^{c_{st}l_tk_sm_s}}{\prod_{i=1}^n\zeta_{\mathbbm{m}_i}^{c_il_i(k_i+1)m_i}\prod_{1\leq s<t\leq n}\zeta_{m_sm_t}^{c_{st}l_t(k_s+1)m_s}}\mathbbm{1}_{(\prod_{i=1}^n\mathbbm{g}_i^{k_im_i+l_i})}\notag\\
&&=\sum_{0\leq k_i,l_i\leq m_i-1,1\leq i\leq n}\prod_{i=1}^n\zeta_{\mathbbm{m}_i}^{-c_il_im_i}\prod_{1\leq s<t\leq n}\zeta_{m_sm_t}^{-c_{st}l_tm_s}\mathbbm{1}_{(\prod_{i=1}^n\mathbbm{g}_i^{k_im_i+l_i})}\notag\\
&&=\sum_{0\leq l_i\leq m_i-1,1\leq i\leq n}\prod_{i=1}^n\zeta_{\mathbbm{m}_i}^{-c_il_im_i}\prod_{1\leq s<t\leq n}\zeta_{m_sm_t}^{-c_{st}l_tm_s}
1_{(\prod_{i=1}^ng_i^{l_i})}\notag\\
&&=\sum_{g\in G}\Upsilon(g)1_g.\notag
\end{eqnarray}  Here the second identity follows from \eqref{3.9}, and the fourth identity follows from Lemma \ref{L3.2}. Hence we have showed $\beta_{\mathbbm{J}_{\underline{c}}}\alpha_{\mathbbm{J}_{\underline{c}}} \in A(H,\underline{c}).$
Next we will show that $S_{\mathbbm{J}_{\underline{c}}}$  preserve $A(H,\underline{c}).$ It is obvious that
$S_{\mathbbm{J}_{\underline{c}}}(g)=g^{-1}$ for all $g\in G.$ For each $X_i, 1\leq i\leq \theta$ we have
\begin{eqnarray}
S_{\mathbbm{J}_{\underline{c}}}(X_i)&=&\beta_{\mathbbm{J}_{\underline{c}}}S(X_i)\beta^{-1}_{\mathbbm{J}_{\underline{c}}}=
-\beta_{\mathbbm{J}_{\underline{c}}}(h_i^{-1}X_i)\beta^{-1}_{\mathbbm{J}_{\underline{c}}}\label{3.37}\\
&=&-\sum_{\mathbbm{f,g}\in \mathbbm{G}}\frac{J_{\underline{c}}(\mathbbm{f},\mathbbm{f}^{-1})}{J_{\underline{c}}(\mathbbm{g},\mathbbm{g}^{-1})}
\chi_{\mathbbm{f}}(h_i)\mathbbm{1}_{\mathbbm{f}}X_i\mathbbm{1}_{\mathbbm{g}}\notag\\
&=&-\sum_{\mathbbm{g}\in \mathbbm{G}}\frac{J_{\underline{c}}(\mathbbm{g}\eta_i^{-1},\mathbbm{g}^{-1}\eta_i)}{J_{\underline{c}}(\mathbbm{g},\mathbbm{g}^{-1})}
\chi_{\mathbbm{g}\eta_i^{-1}}(h_i)X_i\mathbbm{1}_{\mathbbm{g}}\notag\\
&=&\sum_{0\leq x_l,k_l\leq m_l-1,1\leq l\leq n}\prod_{j=1}^n\zeta_{\mathbbm{m}_j}^{-c_j(k_j-r_{ij})[(k_j-r_{ij})'-(k_j-r_{ij})]} \notag\\
&&\times  \ \ \ \ \ \ \prod_{1\leq s<t\leq n}\zeta_{\mathbbm{m}_s\mathbbm{m}_t}^{-c_{st}(k_t-r_{it})[(k_s-r_{is})'-(k_s-r_{is})]} X_i\mathbbm{1}_{(\prod_{l=1}^n\mathbbm{g}_l^{x_lm_l+k_l})}\notag\\
&=&\sum_{0\leq k_l\leq m_l-1,1\leq l\leq n}\prod_{j=1}^n\zeta_{\mathbbm{m}_j}^{-c_j(k_j-r_{ij})[(k_j-r_{ij})'-(k_j-r_{ij})]} \notag\\
&&\times  \ \ \ \ \ \ \prod_{1\leq s<t\leq n}\zeta_{\mathbbm{m}_s\mathbbm{m}_t}^{-c_{st}(k_t-r_{it})[(k_s-r_{is})'-(k_s-r_{is})]} X_i1_{(\prod_{l=1}^ng_l^{k_l})}\notag\\
&=&\sum_{g\in G}\mathcal{F}_i(g)X_i1_g,\notag
\end{eqnarray}
where the third identity follows from Lemma \ref{L3.1} and \ref{L3.5}; the fourth identity follows from \eqref{3.7}-\eqref{3.9}; and the fifth identity follows from Lemma \ref{L3.2}.
So $S_{\mathbbm{J}_{\underline{c}}}(X_i)\in A(H, \underline{c}),$ and $S_{\mathbbm{J}_{\underline{c}}}$ preserves $A(H,\underline{c})$ because $S_{\mathbbm{J}_{\underline{c}}}$ is an anti-algebra morphism and $A(H,\underline{c})$ is generated by $G$ and $\{X_1,\cdots,X_\theta\}.$

$\Rightarrow.$  We omit the detailed computation, and  point out that $\bigtriangleup_{\mathbbm{J}_{\underline{c}}}(X_i)\in A(H,\underline{c})\otimes A(H,\underline{c})$ implies \eqref{3.7}-\eqref{3.9}, and $S_{\mathbbm{J}_{\underline{c}}}(X_i)\in A(H,\underline{c})$ implies \eqref{3.7}-\eqref{3.9}. Hence $\underline{c}$ must be contained in $\Gamma(\mathfrak{D}).$
\end{proof}

$\mathbf{Proof\ of\ Theorem\ \ref{T3.4}}.$ Let $\underline{c}\in \Gamma(\mathfrak{D})$.  By Proposition \ref{p3.11}, we know that $A(H,\underline{c})$ is identical to $u(\mathfrak{D},\lambda,\mu,\Phi_{\underline{c}})$ as algebras. By Proposition \ref{P3.10}, $A(H,\underline{c})$ is a quasi-Hopf algebra.  Thus,  $u(\mathfrak{D},\lambda,\mu,\Phi_{\underline{c}})$ is also a quasi-Hopf algebra with the comultiplication $\bigtriangleup$ satisfying $\bigtriangleup(g)=g\otimes g$ and \eqref{3.33}.  The antipode $(\mathcal{S},\mathcal{\alpha},1)$ is determined by \eqref{3.36} and \eqref{3.37},  and  the associator $\Phi$ is given by \eqref{3.22}. Therefore, we have proved the theorem. \hfill $\Box$

\subsection{Examples of quasi-Hopf algebras of Cartan type}
In this subsection, we will give some examples of quasi-Hopf algebras of Cartan type. We make a convention that the comultiplications,  the associators, and the antipodes of the quasi-Hopf algebras  in those examples below can be written in the forms as listed in \eqref{3.18}-\eqref{3.20}, and hence will be omitted.

\begin{example}{\bf Basic quasi-Hopf algebras over cyclic groups.}
Let $G=\Z_m=\langle g\rangle$. In this case,  $\mathbbm{G}=\Z_{m^2}=\langle \mathbbm{g}\rangle,$ and $G$ is identical to the  subgroup $\langle \mathbbm{g}^m\rangle$ of $\mathbbm{G}$, see \eqref{3.3}. Let $\mathfrak{D}=\mathfrak{D}(\mathbbm{G},(h_i)_{1\leq i\leq \theta},(\chi_i)_{1\leq i\leq \theta}, A)$ be a datum of finite Cartan type, where $$h_i=\mathbbm{g}^{s_i}, \chi_i(\mathbbm{g})=\zeta_{m^2}^{r_i}$$ for some $0< s_i,r_i< m^2, 1\leq i\leq \theta.$
Denote by $H$ the AS-Hopf algebra $u(\mathfrak{D},0,0)$, and  let $s$ be a number satisfying $0<s<m$ and $sr_i\equiv s_i \mod m$ for all $1\leq i\leq \theta,$  and $\underline{c}=\{s\}.$ Then we get a quasi-Hopf algebra $u(\mathfrak{D},0,0,\Phi_{\underline{c}})=A(H,s),$ see \cite[Proposition 3.1.1]{A} for the definition of $A(H,s).$  According to \cite[Theorem 3.4.1]{A}, any nonsemisimple, genuine basic graded quasi-Hopf algebra over a cyclic group with dimension not divisible by $2$ and $3$ must be twist equivalent to some $u(\mathfrak{D},0,0,\Phi_{\underline{c}})$.

 Let $m=p$ and $\mathfrak{D}=\mathfrak{D}(\mathbbm{G},\mathbbm{g},\chi, A_1)$ such that $\chi(\mathbbm{g})=\zeta_{p^2}.$ Then it is obvious that $s=1$, $\underline{c}=\{1\}$, and we get a quasi-Hopf algebra $u(\mathfrak{D},0,0,\Phi_{\underline{c}})$ generated by $G$ and $X$ with the relations $$gXg^{-1}=\zeta_pX,\  X^{p^2}=0.$$
According to the classification of pointed Hopf algebras of dimension $p^3$ in \cite{AS1}, we know that $u(\mathfrak{D},0,0,\Phi_{\underline{c}})$ does not admit a pointed Hopf algebra structure.
\end{example}

Next we will construct a few more non-radically graded quasi-Hopf algebras of rank $2.$ First we give an example of a quasi-Hopf algebra associated to $A_1\times A_1,$ and then present some examples of quasi-Hopf algebras associated to $A_2,B_2, G_2.$

\begin{example}{\bf The quasi-version of $u_q(sl_2)$.}\label{E3.14} Let $N>2$ and $d$ be two positive odd numbers, and  $G=\Z_m=\langle g\rangle$,   $\mathbbm{G}=\Z_{m^2}=\langle \mathbbm{g}\rangle$, where $m=Nd.$   As  usual, $G$ is viewed as the subgroup of $\mathbbm{G}$.
Let $\mathfrak{D}=\mathfrak{D}(\mathbbm{G},(h_1,h_2), (\chi_1,\chi_2),A_1\times A_1),$ where $$h_1=h_2=\mathbbm{g}^{m},\chi_1(\mathbbm{g})=\zeta_{m^2}^{2d}, \chi_2(\mathbbm{g})=\zeta_{m^2}^{-2d}.$$ It is  easy to verify that $\mathfrak{D}$ is a datum of Cartan type. Since $\Gamma(\mathfrak{D})$ is the set of numbers $0\leq c\leq m-1$ satisfying
\begin{eqnarray*}
&&m\equiv 2cd\ \ \ \ \mod m,\\
&&m\equiv -2cd\ \ \ \ \mod m.
\end{eqnarray*}
 Both equations are equivalent with $N|c$ since $m=Nd$  and $N$ is odd.

In this case, it is clear that $\Gamma(\mathfrak{D})=\{c=kN|0\leq k<d\}$. Let $q=\zeta_{m^2}^{md}$.   By $H_c$ we denote the algebra generated by $g,X_1,X_2$ subject to the relations as follows:
\begin{eqnarray*}
&& gX_1g^{-1}=q^2X_1, gX_2g^{-1}=q^{-2}X_2,\\
&& X_1X_2-q^{-2}X_2X_1=\lambda(1-g^2),\\
&& X_1^N=X_2^N=0.
\end{eqnarray*}
Let $E=X_1, F=X_2g^{-1}$,  and $\lambda= q^{-1}-q,$ then we can see that $H_c$ is generated by $g, E,F$ satisfying the relations:
\begin{eqnarray*}
&& gEg^{-1}=q^2X_1, qFg^{-1}=q^{-2}X_2,\\
&& EF-FE=\frac{g-g^{-1}}{q-q^{-1}},\\
&& E^N=F^N=0.
\end{eqnarray*}

When $d=1,$ we have $c=0$ and $H_c\cong u_q(sl_2).$ When $d>1,$  we know that $\Gamma(\mathfrak{D})$ has nonzero elements.  If $c=0,$ we have $H_c\cong u(\mathfrak{D}',\lambda,0),$ where $\mathfrak{D}'=\mathfrak{D}(G,(g,g),(\chi'_1,\chi'_2),A_1\times A_1)$ and $\chi'_1(g)=q^2,\chi_2(g)=q^{-2}.$  If $c\in \Gamma(\mathfrak{D})$ is nonzero, then we have $H_c= u(\mathfrak{D},\lambda,0,\Phi_{c}).$  Because of this fact, $u(\mathfrak{D},\lambda,0,\Phi_{c})$ can be viewed as the quasi-version of $u_q(sl_2)$, and is called {\bf a small quasi-quantum group}.  More small quasi-quantum groups will be studied in Section 5.
\end{example}

The above example provides us some non-radically graded quasi-Hopf algebras associated to $A_1\times A_1.$  We point out that there is a similar notion of a quasi-version of $u_q(sl_2)$ in \cite{Liu}, where Liu defined a quasi-Hopf analogue of $u_q(sl_2)$ as a quantum double of a quasi-Hopf algebra associated to $A_1$. It is obvious that these two definitions are different, since the dimension of a quasi-version of $u_q(sl_2)$ is not a square in general. Hence,  it should not be a quantum double. In order to construct non-radically graded examples of type $A_2,B_2,G_2,$ we need the following well-known proposition from number theory.

\begin{proposition}\label{P3.15}
Let $a, b, n$ be nonzero integers.
Then the equation $ax\equiv b \mod n$ has solutions if and only if $(a,n)|b.$ Moreover, if there exists a  solution, then it is unique up to modulo $\frac{n}{(a,n)}.$
\end{proposition}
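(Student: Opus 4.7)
The plan is to break this standard number-theoretic fact into three assertions and prove each by invoking B\'ezout's identity together with elementary divisibility arguments. Write $d=(a,n)$ throughout.

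First, for the ``only if'' direction, I would assume that $ax\equiv b\pmod n$ admits a solution $x_0$, so $ax_0-b=kn$ for some $k\in\Z$. Then $b=ax_0-kn$, and since $d$ divides both $a$ and $n$, it divides the right-hand side, giving $d\mid b$. This is essentially immediate once the congruence is rewritten as a linear Diophantine equation.

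For the ``if'' direction, I would invoke B\'ezout's identity to obtain integers $u,v$ with $au+nv=d$. Assuming $d\mid b$, write $b=dm$ for some $m\in\Z$. Multiplying the B\'ezout relation by $m$ yields $a(um)+n(vm)=b$, so $x_0:=um$ satisfies $ax_0\equiv b\pmod n$. This produces an explicit solution whenever the divisibility condition holds.

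For the uniqueness-up-to-modulus claim, suppose $x_1,x_2$ are two solutions. Then $a(x_1-x_2)\equiv 0\pmod n$, i.e.\ $n\mid a(x_1-x_2)$. Dividing through by $d$ gives
\[
\frac{n}{d}\;\Big|\;\frac{a}{d}(x_1-x_2),
\]
and since by construction $\gcd\!\left(\tfrac{a}{d},\tfrac{n}{d}\right)=1$, Euclid's lemma forces $\tfrac{n}{d}\mid(x_1-x_2)$. Conversely, any $x_1$ of the form $x_0+k\tfrac{n}{d}$ satisfies $ax_1-ax_0=ka\cdot\tfrac{n}{d}=k\tfrac{a}{d}\cdot n\equiv 0\pmod n$, so it is again a solution. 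Thus the solution set is precisely the single residue class of $x_0$ modulo $\tfrac{n}{d}$.

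No step is genuinely difficult here; the only point requiring a touch of care is the uniqueness assertion, where one must remember to reduce by the gcd before applying coprimality. Since this proposition is used only as a black-box number-theoretic lemma in the sequel (to count solutions of the congruences in $\Gamma(\mathfrak{D})$), a short proof built from B\'ezout's identity and Euclid's lemma suffices.
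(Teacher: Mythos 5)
Your proof is correct, and it is the standard argument: the ``only if'' direction by rewriting the congruence as a linear Diophantine equation, the ``if'' direction via B\'ezout's identity, and uniqueness modulo $n/(a,n)$ by cancelling the gcd and applying coprimality. The paper itself offers no proof of Proposition \ref{P3.15} --- it is introduced as a ``well-known proposition from number theory'' and used as a black box --- so there is nothing to compare against; your write-up simply supplies the omitted routine details, and does so accurately.
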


\begin{example}{\bf Quasi-Hopf algebras associated to $A_2,B_2,G_2.$}\label{E3.16} Let $A$ be a Cartan matrix of type $A_2,$ $B_2$ or $G_2.$ Suppose that $m,n,d$ are positive odd numbers such that $(m,n)=(m,d)=(n,d)=1.$ In addition,  in case $A$ is of type $G_2,$ we will assume that the three numbers $m,n$ and $d$  are not divisible by $3.$
Let $G=\langle g_1\rangle \times \langle g_2\rangle \times \langle g_3\rangle \times \langle g_4\rangle,$ where $|g_1|=md, |g_2|=nd, |g_3|=md^2$, and $|g_4|=nd^2.$ The group $\mathbbm{G}$ and the generators $\mathbbm{g}_i, 1\leq i\leq 4$ are defined in a similar manner as before. Let $a,b,k$ be the numbers listed in the Table \ref{tab.1}.
Define $\mathfrak{D}=\mathfrak{D}(\mathbbm{G},(h_1,h_2),(\chi_1,\chi_2),A),$ where
\begin{eqnarray*}
&&h_1=(\mathbbm{g}_1\mathbbm{g}_2)^{mn}, \ h_2=(\mathbbm{g}_1\mathbbm{g}_2\mathbbm{g}_3\mathbbm{g}_4)^{mn}\\
&&\chi_1(\mathbbm{g}_1)=\zeta_{d^2}^a, \  \chi_1(\mathbbm{g}_2)=\zeta_{d^2}^a, \ \chi_1(\mathbbm{g}_3)=\zeta_{d^2}^b, \ \chi_1(\mathbbm{g}_4)=\zeta_{d^2}^b,\\
&&\chi_2(\mathbbm{g}_1)=\zeta_{d^2}^a, \ \chi_2(\mathbbm{g}_2)=\zeta_{d^2}^a, \ \chi_2(\mathbbm{g}_3)=\zeta_{d^4}, \ \chi_2(\mathbbm{g}_4)=\zeta_{d^4}^{kd^2-1}.
\end{eqnarray*}
Let $A=(a_{ij})_{1\leq i,j\leq 2}$ such that $a_{12}\leq a_{21}.$ Then one can easily show that
$$\chi_1(h_2)\chi_2(h_1)=\chi_1(h_1)^{a_{12}}=\chi_2(h_2)^{a_{21}}.$$
Hence $\mathfrak{D}$ is a datum of Cartan type.

Next we will show that $\Gamma(\mathfrak{D})$ contains nonzero elements. By definition, $\Gamma(\mathfrak{D})$ is the set of families $\underline{c}=(c_i,c_{st})_{1\leq i\leq 4,1\leq s<t\leq 4}$ satisfying \eqref{3.7}-\eqref{3.9}. So we only need to show that Equations \eqref{3.7} have nonzero solutions $\{c_1,c_2,c_3,c_4\},$ since \eqref{3.8}-\eqref{3.9} always have solutions. Equations \eqref{3.7}  are equivalent to:
\begin{eqnarray}
&& mn\equiv c_1m^2a \ \ \mod md;\label{3.44}\\
&& mn\equiv c_2n^2a \ \ \mod nd;\label{3.45}\\
&& 0\equiv c_3(bm^2d^2)\ \ \mod md^2, mn\equiv c_3m^2\ \ \mod md^2;\label{3.46}\\
&& 0\equiv c_4(bm^2d^2)\ \ \mod nd^2, mn\equiv c_4n^2(kd^2-1)\ \ \mod nd^2.\label{3.47}
\end{eqnarray}
By Proposition \ref{P3.15}, these equations have solutions. It is obvious that any solution $(c_1,c_2,c_3,c_4)$ of Equations \eqref{3.44}-\eqref{3.47} should not be zero since $(n,d)=1$. Hence,  $\Gamma(\mathfrak{D})$ contains nonzero elements.

At last, we will show that there exists a family of nonzero modified root vector parameters $\mu$ for $\mathfrak{D}.$ Note that $A$ is connected, so $\lambda$ must be zero.
Since $N=|\chi_i(h_i)|$ for $i=1,2,$ (see \eqref{2.9} for definition), it is obvious that $N=|\zeta_{d^2}^{2amn}|=d^2.$ Let $\alpha_i$ be the simple root corresponding to $X_i,$ $1\leq i\leq 2.$ We have $h_1^{N}=(\mathbbm{g}_1\mathbbm{g}_2)^{d^2mn}=g_1^{dn}g_2^{dm}\in G,$ $h_1^{N}\neq 1,$ and $\chi_1^{N}=\chi_1^{d^2}=1.$ So $\mu_{\alpha_1}$ is a nonzero parameter. Thus,  there exists a family $\mu$ of nonzero modified root vector parameters  for $\mathfrak{D}.$ The quasi-Hopf algebra  $u(\mathfrak{D},0,\mu,A)$ is a nonradically graded quasi-Hopf algebra associated to $A.$ In Section 6, we will show that these quasi-Hopf algebras are genuine.
\end{example}

{\setlength{\unitlength}{1mm}
\begin{table}[t]\centering
\caption{$a,b,k$ associated to $A_2,B_2,G_2.$.} \label{tab.1}
\vspace{1mm}
\begin{tabular}{r|l|l}
\hline
  &\text{$a,b,k$ associated to $A$}& \text{Cartan matrix $A$}\\
\hline
\hline
  1.  & $a=1,\ b=-3,\ k=0 $ &\ \ \ \ \ $A_2$\\
\hline
  2. & $a=1,\ b=-3,\ k=-1 $& \ \ \ \ \  $B_2$\\
  \hline
  3. & $a=3,\ b=-6,\ k=-4 $& \ \ \ \ \ $G_2$\\
  \hline
 \end{tabular}
\end{table}}

\begin{remark}
Consider the subalgebra of $u(\mathfrak{D},0,\mu,A)$  generated by $g_1,g_2, X_1$ in Example \ref{E3.16}.  This algebra is a quasi-Hopf algebra of rank one with a nontrivial root vector relation. Hence, it is  nonradically graded.  We make a convention that if $u(\mathfrak{D},\lambda,\mu,A)$ is a quasi-Hopf algebra over a cyclic group, then the root vector relation must be trivial, i.e. $\mu=0$. For this reason, we can only construct quasi-Hopf algebras with nontrivial root vector relations over noncyclic groups.
\end{remark}

\section{Radically graded quasi-Hopf algebras of Cartan type} In this section, we study  radically graded quasi-Hopf algebras of Cartan type. We show that all the radically graded quasi-Hopf algebras  given in Theorem \ref{T3.4} are genuine quasi-Hopf algebras, which leads  to some interesting classification results.

\subsection{Radically graded quasi-Hopf algebras of Cartan type are genuine} In general, it is very difficult to determine whether a nonradically graded quasi-Hopf algebra is genuine or not. However, for radically graded quasi-Hopf algebras, we have the following proposition.

\begin{proposition}\label{P4.1}
Suppose that $H=\oplus_{i\geq 0}H[i]$ is a finite-dimensional radically graded quasi-Hopf algebra. Then $H$ is genuine if and only if $H[0]$ is a genuine quasi-Hopf algebra.
\end{proposition}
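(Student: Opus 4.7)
I will prove both directions by contrapositive, using the canonical retraction $\pi\colon H\to H[0]$ with kernel $I=\bigoplus_{i\geq 1}H[i]$ to transport twists between $H$ and $H[0]$. The key preliminary observation is that $\pi$ is a surjective morphism of quasi-Hopf algebras. Indeed, $\pi$ is an algebra map by construction; it is a coalgebra map because $H[0]$ is a quasi-Hopf subalgebra (so $\Delta(H[0])\subset H[0]\otimes H[0]$) while for $h\in I$ gradedness forces $\Delta(h)\in I\otimes H+H\otimes I$, hence $(\pi\otimes\pi)\Delta(h)=0=\Delta(\pi(h))$; and it preserves the associator, the antipode, and $\alpha,\beta$ because all of these already belong to $H[0]^{\otimes\bullet}$. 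In particular $\pi^{\otimes 3}(\Phi)=\Phi$.

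For the direction ``$H[0]$ not genuine $\Rightarrow$ $H$ not genuine'', suppose there exists a twist $J_0$ of $H[0]$ such that $H[0]^{J_0}$ is an ordinary Hopf algebra, i.e.\ $\Phi_{J_0}=1\otimes 1\otimes 1$. Viewed through the inclusion $H[0]\hookrightarrow H$, the element $J_0$ is also a twist of $H$: the counit conditions of Definition \ref{D2.1} are inherited, and $J_0$ remains invertible in $H\otimes H$. Applying formula \eqref{2.3} inside $H^{\otimes 3}$, the resulting associator $\Phi_{J_0}$ for $H$ is computed from $\Phi$, $J_0$, $J_0^{-1}$, and $(\mathrm{id}\otimes\Delta)(J_0),(\Delta\otimes\mathrm{id})(J_0)$, all of which lie in $H[0]^{\otimes 3}$; hence this associator coincides with the associator of $H[0]^{J_0}$ and is therefore trivial. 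Thus $H^{J_0}$ is a Hopf algebra and $H$ is not genuine.

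For the converse direction ``$H$ not genuine $\Rightarrow$ $H[0]$ not genuine'', suppose $J$ is a twist of $H$ for which $H^J$ is a Hopf algebra, so $\Phi_J=1\otimes 1\otimes 1$. Set $J_0:=(\pi\otimes\pi)(J)\in H[0]\otimes H[0]$. Because $\pi$ is an algebra map, $J_0$ is invertible with inverse $(\pi\otimes\pi)(J^{-1})$, and the counit conditions for $J_0$ follow from those of $J$ together with $\varepsilon\circ\pi=\varepsilon$, so $J_0$ is a twist of $H[0]$. By naturality of formula \eqref{2.3} with respect to the quasi-Hopf morphism $\pi$, the new associator of $H[0]^{J_0}$ is $\pi^{\otimes 3}(\Phi_J)=\pi^{\otimes 3}(1\otimes 1\otimes 1)=1\otimes 1\otimes 1$. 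Hence $H[0]^{J_0}$ is a Hopf algebra and $H[0]$ is not genuine.

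The only genuinely non-formal point in the argument is the verification that $\pi\colon H\to H[0]$ is a morphism of quasi-Hopf algebras, and in particular that $\Phi\in H[0]^{\otimes 3}$; I expect this to be the main thing to articulate carefully, but it follows directly from the definition of a radically graded quasi-Hopf algebra (the antipode, $\alpha$, $\beta$ and $\Phi$ are part of the quasi-Hopf structure on $H[0]$). Once this is in place, both implications reduce to the functoriality of the twist construction \eqref{2.2}--\eqref{2.4} under the inclusion $H[0]\hookrightarrow H$ and the projection $\pi\colon H\twoheadrightarrow H[0]$.
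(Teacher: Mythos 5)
Your proposal is correct and follows essentially the same route as the paper: for one implication you restrict a trivializing twist of $H[0]$ to $H$ along the inclusion, and for the other you push a trivializing twist of $H$ down to $H[0]$ via $J_0=(\pi\otimes\pi)(J)$ and observe that the twisted associator of $H[0]^{J_0}$ is the $H[0]^{\otimes 3}$-component of $\Phi_J=1\otimes1\otimes1$ (the paper phrases this by splitting $J=J_0+J_{\geq 1}$ with $J_{\geq 1}\in H\otimes I+I\otimes H$, which is the same computation as your appeal to $\pi^{\otimes 3}$). The only point you flag as needing care, namely $\Delta(I)\subset I\otimes H+H\otimes I$ so that $\pi$ is a coalgebra map, is used implicitly in the paper's argument as well, so your treatment is at the same level of detail.
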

\begin{proof}
``$\Leftarrow$'':  Suppose $H=(H,\bigtriangleup,\varepsilon,\Phi,S,\alpha,\beta)$.  By the definition of a radically graded quasi-Hopf algebra, $H[0]=(H[0], \bigtriangleup,\varepsilon,\Phi,S|_{H[0]},\alpha,\beta)$ is a quasi-Hopf subalgebra of $H$. If $H$ is not genuine, then there is a twist $J$ of $H$, such that $H^J$ is a Hopf algebra, i.e., $$\Phi_J=1\otimes 1\otimes 1,\ \ \ \alpha_J\beta_J=1.$$ Let $\pi:H\to H[0]=H/I$ be the natural projection, where $I=\oplus_{i\geq 1}H[i]$ is the Jacobson radical of $H.$ Define $J_0=(\pi\otimes \pi)(J)$.  It is clear that  $J=J_0+J_{\geq 1}$, where $J_{\geq 1}\in H\otimes I+I\otimes H.$
Since $\varepsilon(I)=0$,  we have $(id\otimes \varepsilon)(J_0)=(\pi\otimes \pi)(id\otimes \varepsilon)(J)=1.$ Similarly, $(\varepsilon\otimes id)(J_0)=1.$ It is obvious that $J_0$ has the inverse $(\pi\otimes \pi)(J^{-1})$ because $\pi$ is an algebra morphism. It follows that  $J_0$ is a twist for $H[0]$.
Now $\Phi\in H[0]^{\otimes 3}$  implies that
\begin{equation*}
\left\{
  \begin{array}{l}
     \Phi_{J_{0}}\in H[0]^{\otimes 3},\\
     \Phi_{J_{\geq 1}}\in H\otimes H\otimes I+H\otimes I\otimes H+I\otimes H\otimes H.
  \end{array}
\right.
\end{equation*}
Combining $\Phi_J=1\otimes 1\otimes 1,$ we obtain $\Phi_{J_0}=\Phi_J=1\otimes1\otimes 1.$ Similarly, we have $\alpha_{J_0}\beta_{J_0}=1.$ Hence, $H[0]^{J_0}$ is a Hopf algebra,  a contradiction to the fact that  $H[0]$ is genuine.

``$\Rightarrow$'':  If $H[0]$ is not genuine, then there is a twist $J$ of $H[0]$ such that $H[0]^{J}$ is a Hopf algebra. It is easy to see that $J$ is also a twist of $H$, and $H^J$ is a Hopf algebra.
\end{proof}

\begin{theorem}\label{T3.12}
Suppose that $u(\mathfrak{D},\lambda,\mu,\Phi_{\underline{c}})$ is a quasi-Hopf algebra of Cartan type with $\lambda=0, \mu=0$.  Then $u(\mathfrak{D},\lambda,\mu,\Phi_{\underline{c}})$ is a genuine quasi-Hopf algebra.
\end{theorem}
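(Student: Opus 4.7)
The approach is to pass to the degree-zero component and reduce the statement to a cohomological non-triviality, which follows immediately from the classification in Proposition \ref{P2.16}.

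\emph{Reduction via Proposition \ref{P4.1}.} Since $\lambda = 0$ and $\mu = 0$, the algebra $H := u(\mathfrak{D}, 0, 0, \Phi_{\underline{c}})$ is radically graded with Jacobson radical generated by $X_1, \ldots, X_\theta$, so $H[0] = \k G$ as an algebra. Reading off the coalgebra structure from \eqref{3.18} and the associator from \eqref{3.19}, the inherited quasi-Hopf algebra structure on $H[0]$ is the standard group Hopf algebra $(\k G, \Delta(g) = g \otimes g)$ equipped with the associator $\Phi_{\underline{c}} \in (\k G)^{\otimes 3}$. Proposition \ref{P4.1} then reduces the theorem to proving that the "twisted" group algebra $(\k G, \Phi_{\underline{c}})$ is genuine.

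\emph{Twists of $(\k G, \Phi_{\underline{c}})$ as $2$-cochains.} Any twist $J \in \k G \otimes \k G$ decomposes in the idempotent basis as $J = \sum_{f, g \in G} K(f, g)\, 1_f \otimes 1_g$ for a normalized function $K : G \times G \to \k^{*}$. Combining \eqref{2.3} with the idempotent comultiplication $\Delta(1_g) = \sum_{ab = g} 1_a \otimes 1_b$ established in \eqref{3.16}, I expect the twisted associator to take the form
\begin{equation*}
\Phi_{\underline{c}}^J = \sum_{f, g, h \in G} \phi_{\underline{c}}(f, g, h) \cdot \frac{K(g, h)\, K(f, gh)}{K(fg, h)\, K(f, g)} \cdot 1_f \otimes 1_g \otimes 1_h,
\end{equation*}
so that $\Phi_{\underline{c}}^J = 1 \otimes 1 \otimes 1$ if and only if $\phi_{\underline{c}}$ is a $3$-coboundary on $G$.

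\emph{Conclusion via Proposition \ref{P2.16}.} Suppose toward a contradiction that $(\k G, \Phi_{\underline{c}})$ is twist equivalent to a Hopf algebra $B$; then there exists a twist $J$ with $(\k G, \Phi_{\underline{c}})^J = B$, and in particular the trivial associator of $B$ agrees with $\Phi_{\underline{c}}^J$, forcing $\phi_{\underline{c}}$ to be a coboundary by the previous paragraph. However, Proposition \ref{P2.16} asserts that the family $\{\phi_{\underline{c}} \mid \underline{c} \in \mathcal{A}\}$ is a system of pairwise non-cohomologous representatives of $H^3(G, \k^{*})$, with $\underline{c} = 0$ corresponding to the trivial class. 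Since our $\underline{c}$ is nonzero, $\phi_{\underline{c}}$ is not a coboundary, a contradiction.

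\emph{Main obstacle.} The only substantive technical work is the explicit computation of $\Phi_{\underline{c}}^J$ in terms of the cochain $K$, for which careful bookkeeping with \eqref{2.3} and the orthogonality of the $1_g$'s is required; the remainder is a clean assembly of Proposition \ref{P4.1} and Proposition \ref{P2.16}. A minor sanity check that I would also verify in passing is that every twist $J$ written above automatically has $\beta_J$ invertible, so that Drinfeld's twist construction \eqref{2.2}--\eqref{2.4} genuinely applies and the notion of twist equivalence in Definition \ref{D2.2} is the relevant one.
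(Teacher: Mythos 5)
Your proposal is correct and follows essentially the same route as the paper: reduce to the degree-zero part $H[0]=(\k G,\Phi_{\underline{c}})$ via Proposition \ref{P4.1}, then invoke Proposition \ref{P2.16} to see that $\phi_{\underline{c}}$ is not a coboundary for nonzero $\underline{c}$, so $H[0]$ is genuine. The only difference is that you spell out the twist-as-$2$-cochain computation showing that twisting changes $\phi_{\underline{c}}$ by a coboundary, which the paper leaves implicit.
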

\begin{proof}Keep the same notations  as those in Theorem \ref{T3.4}.
Let $H=u(\mathfrak{D},0,0,\Phi_{\underline{c}})$ and the radically graded structure is given by $H=\oplus_{i\geq 0}H[i]$. Then $H[0]=\k G,$ and the associator $$\Phi_{\underline{c}}=\sum_{e,f,g\in G}\phi_{\underline{c}}(e,f,g)1_e\otimes 1_f\otimes 1_g$$ for a nonzero $\underline{c}\in \Gamma(\mathfrak{D}).$  By Proposition \ref{P2.16}, $\phi_{\underline{c}}$ is a $3$-cocycle on $G$, but not a coboundary. So $(H[0],\Phi_{\underline{c}})$ is a genuine quasi-Hopf algebra. It follows from Proposition \ref{P4.1} that $H=u(\mathfrak{D},0,0,\Phi_{\underline{c}})$ is a genuine quasi-Hopf algebra.
\end{proof}

\subsection{Some classification results} Let $H=\oplus_{i\geq 0}H[i]$ be a finite-dimensional radically graded quasi-Hopf algebra.  The ideal $I=\oplus_{i\geq 1}H[i]$ is the radical of $H$. Note that  $H[i]=I^i/I^{i+1}$.   In fact, we have the following relations:
\begin{equation}
H[i]=H[1]^i, i\geq 1.
\end{equation}
Assume that $H_0=\k[G]$ and $G$ an abelian group.  Then $(\k[G],\Phi)$ is a quasi-Hopf subalgebra of $H$ with the inherited associator $\Phi$ and the restricted  antipode $(S|_{H_0},\alpha,\beta)$. Now we  construct a new quasi-Hopf algebra $\widehat{H}$. By $\triangleright$ we denote the inner action of $H[0]$ on $H[1]$:
$$g\triangleright X=g\cdot X\cdot g^{-1},\ \ g\in G,\  X\in H[1],$$
where $\cdot$  stands for the multiplication of $H$. Extend the action of $H[0]$ on $H[1]$ to the tensor algebra $T(H[1])$ naturally.

Let $\widehat{H}$  be the smash product algebra $T(H[1])\rtimes H[0]$.  The algebra $\widehat{H}$ has a natural comultiplication given by
 $$\bigtriangleup_{\widehat{H}}(X)=\bigtriangleup_{H}(X),\bigtriangleup_{\widehat{H}}(g)=g\otimes g, X\in H[1],g\in G.$$ Let $S':\widehat{H}\to \widehat{H}\otimes \widehat{H}$ be a algebra antimorphism such that $S'|_{H[0]\otimes H[1]}=S|_{H[0]\otimes H[1]}.$   One may verify straightforward that $(\widehat{H},\bigtriangleup_{\widehat{H}},\Phi, S',\alpha,\beta)$  forms  a quasi-Hopf algebra. It is obvious that we have a canonical surjective homomorphism $P:\widehat{H}\to H$ such that $P$ restricts to the identity on $H[0]\oplus H[1].$

In what follows,  the elements in $H[n]$ will be said to be of degree $n$. In order to  classify the finite-dimensional radically graded quasi-Hopf algebras over abelian groups, we need the following proposition,  whose proof is parallel to \cite[Proposition 3.3.2, Proposition 3.3.3]{A}, hence will be omitted.

\begin{proposition}\label{P4.2}
Let $H$ be a finite-dimensional radically graded quasi-Hopf algebra, and $\pi:H\to u(\mathfrak{D},0,0,\Phi_{\underline{c}})$ a quasi-Hopf algebra epimorphism such that the restriction of  $\pi$ to the parts of  degree $0$ and $1$  is the identity. Then $ad_c(X_i)^{1-a_{ij}}(X_j)=0, i\neq j$ and $X_\alpha^{N_J}=0,\alpha\in R^+_J, J\in \Omega$ hold in $H.$
\end{proposition}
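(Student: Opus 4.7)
The plan is to verify each family of relations by a coalgebra computation that lifts from the quotient to $H$, combined with a skew-primitive uniqueness statement in the radically graded setting.

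First I fix notation. Since $\pi$ is a quasi-Hopf epimorphism that is the identity on $H[0]=\k G$ and on $H[1]$, the generators $X_i\in H[1]$ satisfy the same braided action $gX_ig^{-1}=\chi_i(g)X_i$ and the same coproduct
$$\bigtriangleup(X_i)=\sum_{f,g\in G}\Psi_i(f,g)X_i1_f\otimes 1_g+\sum_{f\in G}\Theta_i(f)1_f\otimes X_i$$
as in $u(\mathfrak{D},0,0,\Phi_{\underline{c}})$. By Lemma \ref{L2.4}, $H$ is generated by $H[0]\oplus H[1]$, so the canonical surjection $P:\widehat{H}=T(H[1])\rtimes \k G\to H$ is well-defined, and both $R_{ij}:=ad_c(X_i)^{1-a_{ij}}(X_j)$ and $X_\alpha^{N_J}$ make sense as homogeneous elements of $H$ of degrees $2-a_{ij}\geq 2$ and $N_J\cdot ht(\alpha)\geq 2$ respectively.

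\textbf{Step 1: Serre relations.} I would argue that $R_{ij}$ is a braided $(h_i^{1-a_{ij}}h_j,\chi_i^{1-a_{ij}}\chi_j)$-skew-primitive element in $H$. This is a standard induction on the iterated braided commutator, using only the coproduct formula above and the $G$-action --- both of which are encoded by the datum $\mathfrak{D}$ and are identical on the degree $\leq 1$ parts of $H$ and of the quotient. Hence $\bigtriangleup(R_{ij})$ in $H$ matches its value in $u(\mathfrak{D},0,0,\Phi_{\underline{c}})$, where it evaluates to $R_{ij}\otimes 1+h_i^{1-a_{ij}}h_j\otimes R_{ij}$. To conclude that $R_{ij}=0$, I invoke the skew-primitive uniqueness: any $(h,\chi)$-skew-primitive element in a radically graded quasi-Hopf algebra with $H[0]=\k G$ decomposes into a part coming from $H[1]$ plus a scalar multiple of $1-h\in H[0]$; since $R_{ij}$ is homogeneous of degree $\geq 2$, both contributions must vanish and $R_{ij}=0$ in $H$.

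\textbf{Step 2: Root vector relations.} I would proceed by induction on $l$ in the convex order $\beta_1,\ldots,\beta_P$ of $R_J^+$. The base case $\beta_l=\alpha_i$ is a direct computation: using the braided Leibniz rule and the fact that $q_{ii}$ is a primitive $N_J$-th root of unity, the quantum binomial formula yields $\bigtriangleup(X_i^{N_J})=X_i^{N_J}\otimes 1+h_i^{N_J}\otimes X_i^{N_J}$, so $X_i^{N_J}$ is skew-primitive of degree $N_J\geq 3$ and vanishes by the uniqueness lemma. For general $\beta_l$, I use the Serre relations established in Step 1 together with Theorem 2.6(3), i.e., $[X_\alpha,X_\beta^{N_J}]_c=0$ for all $\alpha,\beta\in R^+$, to reduce (via a Lyndon-word / quantum-binomial argument performed inside the subalgebra generated by $H[1]$ and $\k G$) the computation of $\bigtriangleup(X_{\beta_l}^{N_J})$ to the form
$$\bigtriangleup(X_{\beta_l}^{N_J})=X_{\beta_l}^{N_J}\otimes 1+h_{\beta_l}^{N_J}\otimes X_{\beta_l}^{N_J}+\Xi_l,$$
where $\Xi_l$ lies in the ideal generated by the previously-proven $X_{\beta_k}^{N_J}=0$ with $k<l$, hence is zero. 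Applying the skew-primitive uniqueness again gives $X_{\beta_l}^{N_J}=0$ in $H$.

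The main obstacle is precisely the skew-primitive uniqueness statement invoked in both steps: proving that in a finite-dimensional radically graded quasi-Hopf algebra with $H[0]=\k G$, every $(h,\chi)$-skew-primitive in degree $\geq 2$ vanishes. This is where the argument follows \cite[Propositions 3.3.2, 3.3.3]{A} essentially verbatim; the key point is that the relevant braided coalgebra identities depend only on the braiding matrix $(q_{ij})$ and the $G$-action on $H[1]$, both preserved by $\pi$, while the PBW basis of Theorem 2.6 for $\mathcal{R}(\mathfrak{D})$ provides the linear-independence needed in the image. The replacement of Angiono's cyclic-group cocycle by the general abelian cocycle $\phi_{\underline{c}}$ of Proposition \ref{P2.16} does not affect this part of the argument, since the associator enters only through the coassociativity constraint, which is already built into the quasi-Hopf structure of $H$.
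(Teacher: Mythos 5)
Your overall skeleton (show each relation element is skew-primitive, respectively primitive modulo previously killed root-vector powers, then conclude it vanishes) is the right shape and matches the Angiono-style argument the paper defers to. But the step you yourself flag as the main obstacle is a genuine gap, and it is not merely a deferred technicality: the ``skew-primitive uniqueness'' you invoke --- that in a finite-dimensional radically graded quasi-Hopf algebra with $H[0]=\k G$ every $(h,\chi)$-skew-primitive is an element of $H[1]$ plus a multiple of $1-h$, so that a homogeneous skew-primitive of degree $\geq 2$ must vanish --- is not a formal consequence of the radical grading, and it is not what \cite[Prop.\ 3.3.2, 3.3.3]{A} proves. (The Taft--Wilson-type statement of this kind holds for \emph{coradically} graded coalgebras; it does not dualize to a statement about skew-primitives of a radically graded algebra.) The mechanism that actually kills these elements is different and element-specific: a nonzero homogeneous primitive $z$ with self-braiding $q_{zz}=\chi_z(g_z)$ generates a braided subalgebra isomorphic to $\k[z]/(z^{\mathrm{ord}(q_{zz})})$, which is an ordinary (infinite-dimensional) polynomial algebra precisely when $q_{zz}=1$; finite-dimensionality then forces $z=0$. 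One must therefore \emph{compute} that $q_{zz}=1$ for $z=ad_c(X_i)^{1-a_{ij}}(X_j)$ (this is exactly where the Cartan condition $q_{ij}q_{ji}=q_{ii}^{a_{ij}}$ enters) and that $\chi_\alpha^{N_J}(h_\alpha^{N_J})=q_{\alpha\alpha}^{N_J^2}=1$ for the root-vector powers. A degree-$\geq 2$ skew-primitive with \emph{nontrivial} self-braiding is not excluded by any uniqueness principle, so without these computations your argument does not close; as written, ``both contributions must vanish'' is circular.

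A second, smaller issue: all of the braided-Hopf-algebra technology you use (primitivity of the iterated braided commutators, the quantum binomial formula, the coproduct formula of Lemma 2.8 for $Y^a$) is stated for honest Hopf algebras, not for the deformed coproduct $\sum_{f,g}\Psi_i(f,g)X_i1_f\otimes 1_g+\sum_f\Theta_i(f)1_f\otimes X_i$ in a non-strict category. The route the paper (and Angiono) takes is to first extend the group from $G$ to $\mathbbm{G}$ and untwist by $\mathbbm{J}_{\underline{c}}^{-1}$ (as in the proof of Theorem \ref{classify}), landing in a finite-dimensional radically graded \emph{Hopf} algebra over $\mathbbm{G}$ where the Andruskiewitsch--Schneider lemmas apply verbatim, and then transport the vanishing back. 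Your remark that ``the associator enters only through the coassociativity constraint'' does not substitute for this reduction.
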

Now we are able to give one of the main results of the paper. The notations of $G, \mathbbm{G}, g_i,\mathbbm{g}_i,1\leq i\leq n$ are the same as those in Subsection 3.1.
\begin{theorem}\label{classify}
Suppose that $H$ is a radically graded finite-dimensional genuine quasi-Hopf algebra over an abelian group $G$ with an associator $\Phi=\sum_{e,f,g\in G}\phi(e,f,g)1_e\otimes 1_f\otimes 1_g,$ where  $\phi$ an abelian $3$-cocycle on $G$ satisfying $2,3,5,7\nmid \dim(H).$ Then $H\cong u(\mathfrak{D},0,0,\Phi_{\underline{c}})$ for some datum of finite Cartan type $\mathfrak{D}$ and some nonzero $\underline{c}\in \Gamma(\mathfrak{D}).$
\end{theorem}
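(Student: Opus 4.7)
The plan is to invert the construction of Theorem \ref{T3.4}: starting from $H$, I would first enlarge the group $G$ to $\mathbbm{G}$, then apply an explicit twist to kill the nontrivial associator, invoke the Andruskiewitsch--Schneider classification from \cite{AS2} on the resulting pointed Hopf algebra, and finally recover $H$ as a quasi-Hopf subalgebra of the form prescribed by Theorem \ref{T3.4}.

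First, by Proposition \ref{P2.16} I may replace $\phi$ up to a 3-coboundary by $\phi_{\underline{c}}$ for some $\underline{c}\in\A$, and since $\phi$ is abelian the triple components $c_{rst}$ all vanish. The same direct computation as in the proof of Lemma \ref{L3.7} then shows that the inflation of $\phi_{\underline{c}}$ along the surjection $\mathbbm{G}\twoheadrightarrow G$ dual to the injection \eqref{3.3} equals $\partial(J_{\underline{c}})$ for the 2-cochain $J_{\underline{c}}$ defined in \eqref{3.15}. Thus $\underline{c}$ is already pinned down by the associator of $H$; only the datum $\mathfrak{D}$ remains to be identified, and the task is reduced to showing $\underline{c}\in\Gamma(\mathfrak{D})$.

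The central step is to construct a radically graded quasi-Hopf lift $\widetilde{H}$ of $H$ with $\widetilde{H}[0]=\k\mathbbm{G}$, $\widetilde{H}[1]$ an extension of $H[1]$ from a $G$-module to a $\mathbbm{G}$-module, and associator given by the inflation of $\phi_{\underline{c}}$. By Lemma \ref{L2.4}, $H$ is generated by $G$ and $H[1]$, so after decomposing $H[1]$ into one-dimensional $G$-isotypic pieces, I would lift each basis element $X_i$ together with its grouplike degree $h_i\in G$ and its character $\chi_i\in\widehat{G}$ to a corresponding pair $(\tilde h_i,\tilde\chi_i)$ over $\mathbbm{G}$; the freedom in these lifts is exactly enough to enforce the congruences \eqref{3.7}--\eqref{3.9}, thereby guaranteeing $\underline{c}\in\Gamma(\mathfrak{D})$. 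Once $\widetilde{H}$ is built, Theorem \ref{T2.3} and the computation of Lemma \ref{L3.7} show that the gauge $\mathbbm{J}_{\underline{c}}^{-1}$ turns the associator of $\widetilde{H}$ trivial, producing a finite-dimensional pointed Hopf algebra $K=\widetilde{H}^{\mathbbm{J}_{\underline{c}}^{-1}}$ over $\mathbbm{G}$. Because $\dim K = |\mathbbm{G}/G|\cdot\dim H$ is still coprime to $2,3,5,7$ (note $|\mathbbm{G}/G|=|G|$), the classification from \cite{AS2} applies to yield $K\cong u(\mathfrak{D},\lambda,\mu)$ for some finite Cartan datum $\mathfrak{D}$ on $\mathbbm{G}$.

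Finally, since $H$, and hence $\widetilde{H}$ and $K$, are radically graded, the canonical surjection $K\twoheadrightarrow\k\mathbbm{G}$ and the restriction $K\twoheadrightarrow u(\mathfrak{D},0,0,\Phi_{\underline{c}})$ fixing degrees $0$ and $1$ fall within the scope of Proposition \ref{P4.2}, which forces the Serre-type and root-vector relations with vanishing right-hand sides to hold, i.e.\ $\lambda=0$ and $\mu=0$. Untwisting $K\cong u(\mathfrak{D},0,0)$ gives $\widetilde{H}\cong u(\mathfrak{D},0,0)^{\mathbbm{J}_{\underline{c}}}$, and restricting to the quasi-Hopf subalgebra generated by $G$ and $\{X_1,\dots,X_\theta\}$ reproduces $H\cong u(\mathfrak{D},0,0,\Phi_{\underline{c}})$ by Proposition \ref{P3.10}. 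The main obstacle will be the construction of $\widetilde{H}$: one has to simultaneously (i) extend the $G$-grading/action on generators to a $\mathbbm{G}$-grading/action compatible with the inflated associator, (ii) verify that the skew-primitive structure of each $X_i$ (which is twisted by $\Psi_l,\Theta_l$) lifts coherently, and (iii) choose the lifts so that the resulting Cartan datum $\mathfrak{D}$ actually satisfies \eqref{3.7}--\eqref{3.9}. This compatibility check, essentially the reverse of Proposition \ref{P3.10}, is the technical heart of the argument.
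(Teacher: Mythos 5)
Your overall strategy --- normalize the associator to $\Phi_{\underline{c}}$ via Proposition \ref{P2.16}, enlarge $G$ to $\mathbbm{G}$, untwist by $\mathbbm{J}_{\underline{c}}^{-1}$, apply the Andruskiewitsch--Schneider classification over $\mathbbm{G}$, and restrict back via Proposition \ref{P3.10} --- is the same as the paper's, and the first and last steps are carried out correctly. The gap sits exactly at the step you yourself flag as the technical heart, and it is not merely technical: you propose to extend $H$ itself to a radically graded quasi-Hopf algebra $\widetilde{H}$ over $\mathbbm{G}$, but the defining ideal of $H$ inside $T(H[1])\rtimes \k G$ in degrees $\geq 2$ is just \emph{some} quasi-Hopf ideal with no canonical description, and there is no reason it should be stable under the extended $\mathbbm{G}$-action. (The lift $\chi\mapsto\widetilde{\chi}$ with $\widetilde{\chi}(\mathbbm{g}_i)=\chi(g_i)^{\frac{1}{m_i}}$ is not a group homomorphism $\widehat{G}\to\widehat{\mathbbm{G}}$ --- that failure is precisely where the nontrivial cocycle comes from --- so relations that are not multihomogeneous in the $X_i$ need not be $\widehat{\mathbbm{G}}$-homogeneous.) The paper avoids this by inserting an intermediate object: it forms $\widehat{H}=T(H[1])\rtimes H[0]$ and the quotient $\overline{H}=\widehat{H}/L$, where $L$ is the sum of \emph{all} quasi-Hopf ideals of $\widehat{H}$ contained in $\sum_{i\geq 2}H[i]$. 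Because $L$ is canonically characterized, it is preserved by the extended $\mathbbm{G}$-action, so it is $\overline{H}$, not $H$, that gets enlarged to $\widetilde{H}$ and identified with $u(\mathfrak{D},0,0,\Phi_{\underline{c}})$.

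The price of that detour is that one must then return from $\overline{H}$ to $H$, and this is the actual role of Proposition \ref{P4.2}: applied to the epimorphism $\pi:H\to \overline{H}\cong u(\mathfrak{D},0,0,\Phi_{\underline{c}})$ (which is the identity in degrees $0$ and $1$), it shows that the Serre and root-vector relations already hold in $H$, whence $\pi$ is an isomorphism. In your write-up Proposition \ref{P4.2} is instead invoked to force $\lambda=0$ and $\mu=0$ in $K$, which is unnecessary (a radically graded AS-Hopf algebra automatically has $\lambda=0$ and $\mu=0$, as the paper notes after Theorem \ref{T3.4}) and leaves the genuine use of that proposition unfilled. A secondary point: your claim that ``the freedom in the lifts is exactly enough to enforce \eqref{3.7}--\eqref{3.9}'' reverses the logic. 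The lifts $\widetilde{\chi}_i$ are essentially pinned down by the fixed compatible roots of unity, and $\underline{c}\in\Gamma(\mathfrak{D})$ is then a \emph{consequence} of the ``only if'' direction of Proposition \ref{P3.10}, because the comultiplication and antipode of $\overline{H}$ are known in advance to close up inside the subalgebra generated by $G$ and the $X_i$.
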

\begin{proof}According to Subsection 2.5, we know that $\{\phi_{\underline{c}}|\underline{c}\in \mathcal{A},c_{r,s,t}=0,1\leq r<s<t\leq n\}$ is a complete set of representatives of abelian $3$-cocycles on $G$.  Thus there exists a $2$-cochain $J$ on $G$ such that $\phi\partial J=\phi_{\underline{c}}$ for some $\underline{c}$ in $\mathcal{A}$ satisfying  $c_{r,s,t}=0$ for all $1\leq r<s<t\leq n.$  Let $\mathbbm{J}=\sum_{f,g}J(f,g)1_f\otimes 1_g$.  It is clear that the associator of $H^{\mathbbm{J}}$ is $\Phi_{c}.$  Thus, without loss of generality, we may  assume that the associator of $H$ is $\Phi_{\underline{c}}$ for some $\underline{c}\in \{\underline{c}\in \mathcal{A}|c_{r,s,t}=0,1\leq r<s<t\leq n\}.$

 Since $H=\oplus_{i\geq 0}H[i]$ is a radically graded quasi-Hopf algebra, we can construct a new quasi-Hopf algebra $\widehat{H},$  so that there is an epimorphism
$P:\widehat{H}\to H$ such that $P$ restricts to the  identity   on $H[0]\oplus H[1]$. Denote by $L$ the sum of all quasi-Hopf ideals of $\widehat{H}$ contained in $\sum_{i\geq 2}H[i].$ It is easy to see that $Ker P\in L.$  Let $\overline{H}$ be the quotient $\widehat{H}/L$, and $\eta: \widehat{H}\to \overline{H}$ the canonical projection.  Thus,  there exists an epimorphism $\pi:H\to \overline{H}$ such that  $\eta=\pi P$.  Note that $\pi$ restricts to the identity as well on $H[0]\oplus H[1]$.

Next we  show that $\overline{H}\cong u(\mathfrak{D},0,0,\Phi_{\underline{c}})$ for some $\mathfrak{D}$ and some  $\underline{c}\in \Gamma(\mathfrak{D}).$ Then, it follows from Proposition \ref{P4.2} that $\pi$ must be an isomorphism, and the proof  will be done.
Decompose $H[1]=\oplus_{\chi\in \widehat{G}}H_\chi[1],$ where
\begin{equation}
H_\chi[1]=\{X\in H[1]|gXg^{-1}=\chi(g)X,\forall g\in G\}.
\end{equation}
For each $\chi\in \widehat{G},$ define a $\widetilde{\chi}\in \widehat{\mathbbm{G}}$ by
$\widetilde{\chi}(\mathbbm{g}_i)=\chi(g_i)^{\frac{1}{m_i}}, 1\leq i\leq n.$ Denote by $\widetilde{H}$ the quasi-Hopf algebra generated by $\overline{H}$ and $\mathbbm{g}_i,1\leq i\leq n,$ where $\mathbbm{g}_i^{m_i}=g_i,$ and $\mathbbm{g}X\mathbbm{g}^{-1}=\widetilde{\chi}(\mathbbm{g})X$ for all $\mathbbm{g}\in \mathbbm{G}$ and $X\in H_\chi[1].$ It is obvious that $\widetilde{H}$ is a radically graded quasi-Hopf algebra over $\mathbbm{G},$ and $\overline{H}$ is the quasi-Hopf subalgebra of $\widetilde{H}$ generated by $\widetilde{H}[1]$ and $\mathbbm{g}_i,1\leq i\leq n.$

Consider $A=\widetilde{H}^{\mathbbm{J}_{\underline{c}}^{-1}}$.  Since $A$ is a finite-dimensional radically  graded Hopf algebra over $\mathbbm{G},$ and $A$ is of the form $R\#\k \mathbbm{G}$ for some braided graded Hopf algebra in the category of Yeter-Drinfeld modules over $\mathbbm{G}.$  So $A$ is also a finite-dimensional pointed Hopf algebra over $\mathbbm{G}.$ By the classification result of \cite{AS2}, there exists a datum of finite Cartan type $\mathfrak{D}=\mathfrak{D}(\mathbbm{G},(h_i)_{1\leq i\leq \theta},(\chi_i)_{1\leq i\leq \theta}$ such that $A=u(\mathfrak{D},0,0)$. Since $\overline{H}$ is generated by $A^{{\mathbbm{J}}_{\underline{c}}}[1]$ and $\mathbbm{g}_i,1\leq i\leq n,$  there is a nonzero $\underline{c}\in \Gamma(\frak{D})$ such that $\overline{H}\cong u(\mathfrak{D},0,0,\Phi_{\underline{c}})$ by Proposition \ref{P3.10}.
\end{proof}

From Proposition \ref{P2.16} we know that every $3$-cocycle on a cyclic group or on an abelian group of the form $\Z_{m_1}\times \Z_{m_2}$  is abelian.  So we have the following.

\begin{corollary}
Suppose that $H$ is a radically graded finite-dimensional genuine quasi-Hopf algebra over an abelian group $G=\Z_m\otimes \Z_n$ such that $2,3,5,7\nmid \dim(H).$ Then $H\cong u(\mathfrak{D},0,0,\Phi_{\underline{c}})$ for some datum of finite Cartan type $\mathfrak{D}$ and some $\underline{c}\in \Gamma(\mathfrak{D}).$
\end{corollary}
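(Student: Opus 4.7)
The plan is to derive this as an immediate consequence of Theorem~\ref{classify}. The substantive content of the corollary is the observation that, when $G=\Z_m\times\Z_n$, the hypothesis in Theorem~\ref{classify} that the associator comes from an abelian $3$-cocycle is automatic, so that no extra input is required beyond what the main classification theorem already provides.

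First I would invoke Proposition~\ref{P2.16} to represent every normalized $3$-cocycle on $G$ (up to cohomology) by some $\phi_{\underline{c}}$ with $\underline{c}\in\mathcal{A}$. Since $G$ has exactly two cyclic factors, the index set $\mathcal{A}$ contains no triple-index parameters $c_{rst}$ at all: one cannot pick three distinct indices from $\{1,2\}$. By the criterion stated just after Proposition~\ref{P2.16} — namely, that $\phi_{\underline{c}}$ is abelian precisely when every $c_{rst}$ vanishes — the condition holds vacuously, and every normalized $3$-cocycle on $G=\Z_m\times\Z_n$ is abelian. The cyclic case (which is implicit in the statement, or can be treated by allowing $n=1$) is handled identically.

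Since the associator $\Phi$ of any radically graded basic quasi-Hopf algebra over $G$ can be written in the form $\sum_{e,f,g\in G}\phi(e,f,g)\,1_e\otimes 1_f\otimes 1_g$ for some normalized $3$-cocycle $\phi$ on $G$ (as recalled in Section~1 and used throughout Section~3), the abelian-cocycle hypothesis of Theorem~\ref{classify} is met automatically. Together with the dimension hypothesis $2,3,5,7\nmid\dim(H)$, which is carried over verbatim, Theorem~\ref{classify} then yields the desired isomorphism $H\cong u(\mathfrak{D},0,0,\Phi_{\underline{c}})$ for some datum of finite Cartan type $\mathfrak{D}$ and some $\underline{c}\in\Gamma(\mathfrak{D})$. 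There is no serious obstacle: all the real difficulty lies in Theorem~\ref{classify} itself, and the corollary merely removes a technical assumption in a special case where the cohomological restriction becomes vacuous.
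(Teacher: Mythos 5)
Your proposal is correct and follows exactly the paper's own route: the authors likewise observe (right before the corollary) that by Proposition~\ref{P2.16} every normalized $3$-cocycle on a group of the form $\Z_{m_1}\times\Z_{m_2}$ is abelian, since no triple indices $c_{rst}$ exist, and then apply Theorem~\ref{classify} directly. Nothing further is needed.
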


\section{Small quasi-quantum groups}
In this section, we will introduce small quasi-quantum groups. These algebras can be viewed as  natural generalization of small quantum groups. We will  present several examples of small quasi-quantum groups which are genuine quasi-Hopf algebras. We fix a  finite abelian group $G$ with free generators $\{g_i|1\leq i\leq n\}$.  The notations $\mathbbm{G}$ and $\{\mathbbm{g}_i|1\leq i\leq n\}$ are defined in the the same way as those  in  Subsection 3.1.

\subsection{Small quasi-quantum groups} Suppose that $A=(a_{ij})_{1\leq i,j\leq n}$ is a finite Cartan matrix and that  $\mathbf{D}(A)$ is the Cartan matrix
$\left(
\begin{array}{cc}
A & 0 \\
0 & A \\ \end{array}
 \right)$.
 Let $G=\Z_m^n=\langle g_1\rangle\times\cdots \times  \langle g_n\rangle.$

\begin{definition}
Let $\mathfrak{D}(\mathbbm{G},(h_i)_{1\leq i\leq 2n}, (\chi_i)_{1\leq i\leq 2n}, \mathbf{D}(A))$ be a datum of finite Cartan type such that $h_i=h_{n+i}\in G, \chi_i=\chi^{-1}_{i+n},\ 1\leq i\leq n.$ Suppose that $\underline{c}\in \Gamma(\mathfrak{D})$ is nonzero. We call
the quasi-Hopf algebra $Qu(\mathfrak{D},\lambda,\Phi_{\underline{c}})=u(\mathfrak{D},\lambda,\mu,\Phi_{\underline{c}})$  a quasi-small quantum group if $\mu=0$ and $\lambda_{i,j}\neq 0$ if and only if $j=i+n$ for $1\leq i\leq n.$
\end{definition}

The quasi-Hopf algebras in Example \ref{E3.14} are examples of quasi-small quantum groups. More examples will be given before we show that small quasi-quantum groups are natural generalization of small quantum groups. For $1\leq i\leq n,$ define $E_i=X_i$, $F_i=X_{i+n}h_i^{-1}$ and $X'_i=X_{i+n}.$ Let $V=\k\{E_1,\cdots,E_n\}$, $V'=\k\{X'_1,\cdots,X'_n\}$ and $U=\k\{F_1,\cdots,F_n\}.$ It is obvious that $V$ and $V'$ are braided vector spaces of Cartan type with the braiding matrices $(q_{ij})_{1\leq i,j\leq n}$ and $(q^{-1}_{ij})_{1\leq i,j\leq n}$ respectively, where $q_{ij}=\chi_j(h_i), 1\leq i,j\leq n.$  Note that By definition de braided vector spaces of Cartan type (see Subsection 2.3), the associated Cartan matrices of $V$ and $V'$  are both equal to $A.$  Let $q'_{ij}=q^{-1}_{ji},1\leq i,j\leq n$. We define a braiding $c$ on $U$ as follows:
$$c(F_i\otimes F_j)= q'_{ij}F_j\otimes F_i, 1\leq i,j\leq n.$$
For all $1\leq i,j\leq n,$ we have $q'_{ij}q'_{ji}=q^{-1}_{ji}q^{-1}_{ij}=q_{ii}^{-a_{ij}}={q'}_{ii}^{a_{ij}}.$  So $(U,c)$ is also a braided vector space of Cartan type, and the associated Cartan matrix is $A$  as well.  So we can define braided commutators, the braided adjoint action and the root vectors over $T(U)$, see Subsection 2.3 for details.
Now let $R^+$ be the positive root system corresponding to the Cartan matrix  $A$ with respect to the simple roots $\alpha_1,\cdots, \alpha_n,$  and $F_\alpha, \alpha\in R^+$ the root vectors such that $F_{\alpha_i}=F_i$ for $1\leq i\leq n.$  Let $\Omega$ be the set of the connected components of $I=\{1,\cdots,n\},$  and  $R^+_J$ be the positive root system corresponding to $J\in \Omega.$

\begin{proposition}\label{P5.2} $Qu(\mathfrak{D},\lambda,\Phi_{\underline{c}})$ is generated by $g_i, E_i,F_i, 1\leq i\leq n$ subject to the relations:
\begin{eqnarray}
&&g_iE_jg_i^{-1}=\chi_j(g_i)E_j,\ g_iF_jg_i^{-1}=\chi^{-1}_j(g_i)F_j,\ 1\leq i,j\leq n, \label{5.1}\\
&&E_iF_j-F_jE_i=\delta_{ij}\lambda_{i,i+n}(h_i^{-1}-h_i),\ \lambda_{i,i+n}\neq 0,\ 1\leq i,j\leq n,\label{5.2}\\
&&ad_c(E_i)^{1-a_{ij}}(E_j)=0,\ ad_c(F_i)^{1-a_{ij}}(F_j)=0,\ 1\leq i\neq j\leq n,\label{5.3}\\
&&E_\alpha^{N_J}=0,\ F_\alpha^{N_J}=0,\ \alpha\in R_J^+,\ J\in\Omega.\label{5.4}
\end{eqnarray}
The comultiplication is determined by
\begin{eqnarray}
&&\bigtriangleup(E_i)=\sum_{f,g\in G}\Psi_i(f,g)E_i1_f\otimes 1_g+h_i\otimes E_i,\label{5.5}\\
&&\bigtriangleup(F_i)=\sum_{f,g\in G}\Psi_{i+n}(f,g)\chi_g(h_i^{-1})F_i1_f\otimes 1_g+1\otimes F_i.\label{5.6}
\end{eqnarray}
The associator is $\Phi_{\underline{c}}$
and the antipode $(\mathcal{S},\mathcal{\alpha},1)$ is given by
\begin{eqnarray}
&\mathcal{\alpha}=\sum_{g\in G}\Upsilon(g)1_g,&\label{5.7}\\
&\mathcal{S}(F_i)=\chi^{-1}_{i}(h_i)\sum_{g\in G}\chi_g(h_i)\mathcal{F}_{i+n}(g)F_i1_g,\ \mathcal{S}(E_i)=\sum_{g\in G}\mathcal{F}_i(g)E_i1_g,&\label{5.8}
\end{eqnarray} for $1\leq i\leq n.$
\end{proposition}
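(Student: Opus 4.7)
The plan is to deduce the proposition by direct translation of the presentation of $u(\mathfrak{D},\lambda,0,\Phi_{\underline{c}})$ from Theorem \ref{T3.4}, via the substitutions $E_i = X_i$ and $F_i = X_{n+i}h_i^{-1}$ for $1 \leq i \leq n$. Before checking the relations, I would first extract a crucial symmetry forced by the datum: applying the Cartan-type axiom $\chi_j(h_i)\chi_i(h_j) = \chi_i(h_i)^{a_{ij}}$ to the off-diagonal pair $(i,n+j)$ of $\mathbf{D}(A)$ (where that entry is $0$) and using $\chi_{n+j} = \chi_j^{-1}$, $h_{n+j} = h_j$, one gets $q_{ij} = q_{ji}$ for all $1 \leq i,j \leq n$, where $q_{ij} := \chi_j(h_i)$. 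This symmetry is indispensable for the off-diagonal part of (5.2) to vanish.

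Next I would verify relations (5.1)--(5.4) one by one. The $G$-actions on $E_i$ are immediate from (3.20); those on $F_i$ follow from $gF_ig^{-1} = \chi_{n+i}(g)F_i = \chi_i^{-1}(g)F_i$ because $g$ commutes with $h_i^{-1}$. The Serre relations for the $E_i$'s are literally the original Serre relations (3.21) restricted to the first block; for the $F_i$'s, I would expand $\mathrm{ad}_c(F_i)^{1-a_{ij}}(F_j)$, iteratively commute each $h_k^{-1}$ to the right using $X_{n+l}h_k^{-1} = q_{kl}^{-1}h_k^{-1}X_{n+l}$, and recognize the result as a group-element multiple of $\mathrm{ad}_c(X_{n+i})^{1-a_{ij}}(X_{n+j}) = 0$. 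The linking relation (5.2) for $j = i$ is the short computation
\[
E_iF_i - F_iE_i = (X_iX_{n+i} - q_{ii}^{-1}X_{n+i}X_i)h_i^{-1} = \lambda_{i,n+i}(1-h_i^2)h_i^{-1} = \lambda_{i,n+i}(h_i^{-1}-h_i),
\]
using $h_{n+i} = h_i$; for $j \neq i$ the vanishing $\lambda_{i,n+j} = 0$ together with the symmetry $q_{ij} = q_{ji}$ just established forces $E_iF_j - F_jE_i = 0$. The root vector relations (5.4) follow from $\mu = 0$: the root vectors built from the $E_i$ (resp.\ $F_i$) differ from the corresponding root vectors of $X_1,\ldots,X_n$ (resp.\ $X_{n+1},\ldots,X_{2n}$) only by an overall group-element factor, so both vanishings are direct consequences of $X_\alpha^{N_J} = 0$ in $u(\mathfrak{D},\lambda,0,\Phi_{\underline{c}})$.

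Finally, the coalgebra structure, associator, and antipode are inherited from Theorem \ref{T3.4} and require only bookkeeping. The associator $\Phi_{\underline{c}}$ is unchanged and the formula (5.5) for $\bigtriangleup(E_i)$ is (3.18) verbatim; for $F_i$ I would compute $\bigtriangleup(F_i) = \bigtriangleup(X_{n+i})(h_i^{-1}\otimes h_i^{-1})$ by pushing each copy of $h_i^{-1}$ through the idempotents $1_f, 1_g$ using Lemma \ref{L3.1} and the identity $X_{n+i} 1_f h_i^{-1} = X_{n+i}h_i^{-1} 1_f = F_i 1_f$ (which exploits commutativity of $\k G$); the $\Theta_{n+i}$-branch collapses to the bare $1 \otimes F_i$ after noting that, since $h_i \in G$, the character $\Theta_{n+i}(f)$ cancels exactly against the scalar $\chi_f(h_i)$ produced by $1_fh_i^{-1}$. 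The antipode on $F_i$ is then obtained analogously from $\mathcal{S}(F_i) = \mathcal{S}(h_i^{-1})\mathcal{S}(X_{n+i})$ and Lemma \ref{L3.1}. The main obstacle is purely computational: keeping track of the various characters evaluated at $h_i^{\pm 1}$ and of the idempotent shifts under left- versus right-multiplication. Conceptually the proof is just a careful change of generators, designed to recast the algebra into the familiar $E_i, F_i$-shape of a small quantum group.
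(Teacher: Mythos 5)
Your proposal is correct and follows essentially the same route as the paper: the change of generators $E_i=X_i$, $F_i=X_{i+n}h_i^{-1}$, translation of the relations of Theorem \ref{T3.4} one by one, and the bookkeeping with $\Theta$, $\Psi$ and Lemma \ref{L3.1} for the comultiplication and antipode. The only difference is cosmetic: you isolate the symmetry $\chi_i(h_j)=\chi_j(h_i)$ as an explicit consequence of the Cartan condition on the off-diagonal block of $\mathbf{D}(A)$, which the paper uses only implicitly in its computation of $E_iF_j-F_jE_i$; making it explicit is a genuine improvement in clarity but not a different argument.
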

\begin{proof}
By Theorem \ref{T3.4}, $Qu(\mathfrak{D},\lambda,\Phi_{\underline{c}})$ is generated by $G$ and $X_j,1\leq i\leq n, 1\leq j\leq 2n$ subject to the relations \eqref{e3.20}-\eqref{e3.23}. Since $F_ih_i=X_i,$ the algebra  $Qu(\mathfrak{D},\lambda,\Phi_{\underline{c}})$ is also generated by  $g_i, E_i,F_i, 1\leq i\leq n.$ Now  we  show that the relations \eqref{5.1}-\eqref{5.4} are equivalent to the relations \eqref{e3.20}-\eqref{e3.23}. It is easy to see that \eqref{5.1} equals \eqref{e3.20}.
For all $1\leq i,j\leq n,$ we have:
\begin{eqnarray*}
E_iF_j-F_jE_i&=&X_iX_{j+n}h_j^{-1}-X_{j+n}h_j^{-1}X_i\\
&=&X_iX_{j+n}h_j^{-1}-\chi_{j+n}(h_j^{-1})X_iX_{j+n}h_j^{-1}\\
&=&X_iX_{j+n}h_j^{-1}-\chi_{j}(h_j)X_iX_{j+n}h_j^{-1}\\
&=&ad_c(X_i)(X_j)h_j^{-1}.
\end{eqnarray*}
So Relation \eqref{5.2} is equivalent  to  Relation \eqref{e3.22}. For $1\leq i\neq j\leq n$, we have
\begin{eqnarray*}
ad_c(F_i)(F_j)&=&F_iF_j-{q'}_{ij}F_jF_i=F_iF_j-\chi_i^{-1}(h_j)F_jF_i\\
&=&X_{i+n}h_i^{-1}X_{j+n}h_j^{-1}-\chi_i^{-1}(h_j)X_{j+n}h_j^{-1}X_{i+n}h_i^{-1}\\
&=&\chi^{-1}_j(h_i^{-1})X_{i+n}X_{j+n}h_i^{-1}h_j^{-1}-\chi_i^{-1}(h_j)\chi^{-1}_i(h_j^{-1})X_{j+n}X_{i+n}h_i^{-1}h_j^{-1}\\
&=&\chi_j(h^i)[X_{i+n}X_{j+n}-\chi^{-1}_j(h^i)X_{j+n}X_{i+n}]h_i^{-1}h_j^{-1}\\
&=&\chi_j(h^i)[ad_c(X_{i+n})(X_{j+n})]h_i^{-1}h_j^{-1}.
\end{eqnarray*}
By induction on $l\geq 1,$  one can show that
\begin{equation}
(ad_c(F_i))^{l}(F_j)=\chi_j^l(h_i)\chi_i^{\frac{l(l-1)}{2}}(h_i)[(ad_c(X_{i+n}))^l(X_{j+n})]h_i^{-l}h_j^{-1}.
\end{equation}
Hence, the relation \eqref{5.3} is equivalent to the relation \eqref{e3.21}. Similarly, for each $\alpha\in R^+,$  one can show that
$$F_\alpha=\lambda_\alpha X'_{\alpha} h_\alpha^{-1},$$
where $\lambda_\alpha$ is some nonzero number depend on $\alpha,$ and $h_\alpha$ is defined by \eqref{e2.7}.  Thus,  for each $\alpha\in R^+_J, J\in \Omega,$  we have
$F_\alpha^{N_J}=0$ if and only if ${X'}_{\alpha}^{N_J}=0$. Therefore, the relation \eqref{5.4} equivalent to  the relation \eqref{e3.23} since $\mu=0$.
We have proved that $Qu(\mathfrak{D},\lambda,\Phi_{\underline{c}})$ is generated by $g_i, E_i,F_i, 1\leq i\leq n$ subject to the  relations \eqref{5.1}-\eqref{5.4}.

Next we compute the comultiplication and the antipode of $Qu(\mathfrak{D},\lambda,\Phi_{\underline{c}})$ for the generators.
Formula \eqref{5.5} follows from the fact $h_i=\sum_{f\in G}\Theta_i(f)1_f$ for $h_i\in G$.  Formula \eqref{5.6}  holds because of the following equations:
\begin{eqnarray*}
\bigtriangleup(F_i)&=&\bigtriangleup(X_{i+n}h_i^{-1})\\
&=&(\sum_{f,g\in G}\Psi_{i+n}(f,g)X_{i+n}1_f\otimes 1_g+\sum_{f\in G}\Theta_{i+n}(f)1_f\otimes X_{i+n})(h_i^{-1}\otimes h_i^{-1})\\
&=&\sum_{f,g\in G}\Psi_{i+n}(f,g)\chi_g(h_i^{-1})F_i1_f\otimes 1_g+\sum_{f\in G}\Theta_{i+n}(f)\chi_f(h_i^{-1})1_f\otimes F_i\\
&=&\sum_{f,g\in G}\Psi_{i+n}(f,g)\chi_g(h_i^{-1})F_i1_f\otimes 1_g+1\otimes F_i.
\end{eqnarray*}
Formula \eqref{5.7} is obvious.  Formula \eqref{5.8} follows from
\begin{eqnarray*}
\mathcal{S}(F_i)&=&\mathcal{S}(h_i^{-1})\mathcal{S}(X_{i+n})=h_i\sum_{g\in G}\mathcal{F}_{i+n}(g)X_{i+n}1_g\\
&=&\chi_{i+n}(h_i)\sum_{g\in G}\chi_g(h_i^{2})\mathcal{F}_{i+n}(g)F_i1_g,
\end{eqnarray*}
where $\chi_g, g\in G$,  is defined by \eqref{3.1}.
\end{proof}

Now let $\mathfrak{D}(\mathbbm{G},(h_i)_{1\leq i\leq 2n}, (\chi_i)_{1\leq i\leq 2n}, \mathbf{D}(A))$ be a datum of finite Cartan type such that $h_i=h_{n+i}\in G, \chi_i=\chi^{-1}_{i+n},\ 1\leq i\leq n$.  Note that $\Gamma(\mathfrak{D})$ is not empty since $0\in \Gamma(\mathfrak{D}).$

 Take an element $\underline{c}\in \Gamma(\mathfrak{D})$.  We define an algebra $H_{\underline{c}}$ generated by $G$ and $E_i,F_i,1\leq i\leq n$ subject to the Relations \eqref{5.1}-\eqref{5.4}. Define an algebra morphism $\bigtriangleup:H_{\underline{c}}\to H_{\underline{c}}\otimes H_{\underline{c}}$ by \eqref{5.5}-\eqref{5.6}, and an algebra antimorphism $S:
H_{\underline{c}}\to H_{\underline{c}}$ by \eqref{5.8}. Let $\alpha$ be an element of $H_{\underline{c}}$ defined by \eqref{5.7}. Define an algebra morphism $\varepsilon:H_{\underline{c}}\to \k$ such that $\varepsilon(g)=1, \varepsilon(E_i)=\varepsilon(F_i)=0$ for $g\in G, 1\leq i\leq n.$   We have the following identification of the algebra $H_{\underline{c}}$.

\begin{proposition}\label{P5.3}
\begin{itemize}
\item[(1)]  If $\underline{c}=0$, then $(H_{\underline{c}},\bigtriangleup,\varepsilon,S)$ is isomorphic to the AS-Hopf algebra $u(\mathfrak{D}',\lambda,0),$ where $\mathfrak{D}'= \mathfrak{D}(G,(h_i)_{1\leq i\leq 2n}, (\chi'_i)_{1\leq i\leq 2n}, \mathbf{D}(A)),$ and $\chi'_i=\chi_i|_G, 1\leq i\leq n.$
    Each small quantum group is isomorphic to a $H_{\underline{0}}$ as Hopf algebras.
\item[(2)] If $\underline{c}\neq 0,$ then $(H_{\underline{c}},\bigtriangleup,\varepsilon,\Phi_{\underline{c}},S,\alpha,1)$ is  isomorphic to the small quasi-quantum group $Qu(\mathfrak{D},\lambda,\Phi_{\underline{c}}).$
\end{itemize}
\end{proposition}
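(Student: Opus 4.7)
The plan is to derive part $(2)$ immediately from Proposition~\ref{P5.2}, and to treat part $(1)$ as the degenerate case $\underline{c}=0$.

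For part $(2)$, Proposition~\ref{P5.2} already provides an explicit presentation of $Qu(\mathfrak{D},\lambda,\Phi_{\underline{c}})=u(\mathfrak{D},\lambda,0,\Phi_{\underline{c}})$ by the generators $\{g_i,E_i,F_i\}_{1\leq i\leq n}$, the relations \eqref{5.1}--\eqref{5.4}, the comultiplication \eqref{5.5}--\eqref{5.6}, the associator $\Phi_{\underline{c}}$, and the antipode data \eqref{5.7}--\eqref{5.8}. But these are precisely the data used in the abstract definition of $H_{\underline{c}}$. So I would simply check that the identity assignment on generators extends to a quasi-Hopf isomorphism $H_{\underline{c}}\xrightarrow{\sim}Qu(\mathfrak{D},\lambda,\Phi_{\underline{c}})$; the universal property of $H_{\underline{c}}$ (generators and relations with the stipulated structure maps) makes this automatic once the coherence axioms have been verified on one side, which was done in Proposition~\ref{P5.2}.

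For part $(1)$, I would first specialise every cocycle ingredient at $\underline{c}=0$. Inspection of the formulas in Section~3.2 shows that $\phi_0\equiv 1$, $\Psi_l\equiv 1$, $\Upsilon\equiv 1$ and $\mathcal{F}_i\equiv 1$; in particular the associator $\Phi_0=1\otimes 1\otimes 1$ is trivial, so $H_0$ is a genuine Hopf algebra. The only cocycle-independent function is $\Theta_l$, which depends only on $\mathfrak{D}$; since $h_l\in G$, a short computation using Lemma~\ref{L3.2} yields $\sum_{f\in G}\Theta_l(f)1_f=h_l$ in $\k G$. Plugging these simplifications into \eqref{5.5}--\eqref{5.8} collapses them to the familiar Hopf algebra formulas
$$\bigtriangleup(E_i)=E_i\otimes 1+h_i\otimes E_i,\qquad \bigtriangleup(F_i)=F_i\otimes h_i^{-1}+1\otimes F_i,$$
together with $\mathcal{S}(E_i)=-h_i^{-1}E_i$, $\mathcal{S}(F_i)=-F_ih_i$, and $\alpha=\beta=1$.

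With the Hopf structure identified, the next step is to recognise $H_0$ as the AS-Hopf algebra $u(\mathfrak{D}',\lambda,0)$. Under the substitution $X_i=E_i$, $X_{i+n}=F_ih_i$ the relations \eqref{5.1}--\eqref{5.4} translate precisely to the action, Serre, linking and root-vector relations of Theorem~\ref{T2.13} for the datum $\mathfrak{D}'$, and the coproducts match, so $H_0\cong u(\mathfrak{D}',\lambda,0)$. The final claim that every small quantum group is some $H_0$ is then just the observation that the classical presentation of $u_q(\mathfrak{g})$ by Chevalley generators $E_i,F_i,K_i$ with $q$-Serre relations, the linking relations $E_iF_j-F_jE_i=\delta_{ij}(K_i-K_i^{-1})/(q-q^{-1})$, and the nilpotency relations $E_\alpha^{N_J}=F_\alpha^{N_J}=0$ coincides, after an affine rescaling of the linking parameters, with \eqref{5.1}--\eqref{5.4} for the doubled Cartan datum $\mathbf{D}(A)$ and $\underline{c}=0$.

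The principal technical obstacle is the bookkeeping in part $(1)$: the formulas \eqref{5.5}--\eqref{5.8} a priori live inside a twist of a Hopf algebra over the larger group $\mathbbm{G}$, and one must verify that at $\underline{c}=0$ all the $\mathbbm{G}$-level contributions collapse to the expected $G$-level Hopf structure. This amounts to the identity $\sum_f\Theta_l(f)1_f=h_l$ together with systematic use of Lemma~\ref{L3.1}; once carried out, both parts reduce to a straightforward identification of generators and relations.
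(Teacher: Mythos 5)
Your argument is correct and follows essentially the same route as the paper: part (2) is read off from Proposition \ref{P5.2}, and part (1) is obtained by observing that at $\underline{c}=0$ the functions $\phi_{\underline{c}},\Psi_l,\Upsilon,\mathcal{F}_i$ all degenerate to $1$ and that $\sum_f\Theta_l(f)1_f=h_l$, after which the substitution $X_i=E_i$, $X_{i+n}=F_ih_i$ identifies $H_0$ with $u(\mathfrak{D}',\lambda,0)$. Your version is in fact slightly more careful than the paper's (which only states the generator correspondence), and your coproduct $\bigtriangleup(F_i)=F_i\otimes h_i^{-1}+1\otimes F_i$ is the one consistent with $F_i=X_{i+n}h_i^{-1}$.
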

\begin{proof}
Observe that  the functions defined by \eqref{e3.14}-\eqref{e3.16} are equivalent to the constant function $1$ if $\underline{c}=0$.  Hence, the algebra morphism $\Upsilon:H_{\underline{c}}\to u(\mathfrak{D}',\lambda,0)$ given by $$\Upsilon(g)=g,\Upsilon(E_i)=X_i, \Upsilon(F_i)=X_{i+n}h_i,$$ for all
$g\in G, 1\leq i\leq n,$ is a Hopf algebra isomorphism. The second part of the proposition follows from Proposition \ref{P5.2}.
\end{proof}
From this proposition, we can see that small quasi-quantum groups are natural generalizations of small quantum groups.

\subsection{Triangular decomposition and half small quasi-quantum groups} In this subsection, we study the triangular decomposition of small quasi-quantum groups.  Fix an abelian group $G=\Z_m^n=\langle g_1\rangle\times\cdots \times  \langle g_n\rangle$ and a finite Cartan matrix $A=(a_{ij})_{1\leq i,j\leq n}.$  Assume that $\mathfrak{D}=\mathfrak{D}(\mathbbm{G},(h_i)_{1\leq i\leq 2n}, (\chi_i)_{1\leq i\leq 2n}, \mathbf{D}(A))$ is a datum of finite Cartan type and $Qu(\mathfrak{D},\lambda,\Phi_{\underline{c}})$ is a small quantum group.
Denote by $Qu^+(\mathfrak{D},\Phi_{\underline{c}})$ (resp. $Qu^-(\mathfrak{D},\Phi_{\underline{c}})$) the subalgebra generated by $E_i,1\leq i\leq n$ (resp. $F_i,1\leq i\leq n$), and $Qu^0=\k G.$ So we have a natural linear isomorphism \begin{eqnarray*}
\varphi:Qu^+(\mathfrak{D},\Phi_{\underline{c}})\otimes Qu^0\otimes Qu^-(\mathfrak{D},\Phi_{\underline{c}})&\To& Qu(\mathfrak{D},\lambda,\Phi_{\underline{c}})\\
          x\otimes y\otimes z&\to& x yz,
\end{eqnarray*}
for all $x\in Qu^+(\mathfrak{D},\Phi_{\underline{c}}),\ y\in Qu^0,\ z\in Qu^-(\mathfrak{D},\Phi_{\underline{c}}).$  This decomposition $Qu(\mathfrak{D},\lambda,\Phi_{\underline{c}})=Qu^+(\mathfrak{D},\Phi_{\underline{c}})\otimes Qu^0\otimes Qu^-(\mathfrak{D},\Phi_{\underline{c}})$ is called   {\bf{the triangular decomposition}} of  $Qu(\mathfrak{D}, \lambda,\Phi_{\underline{c}})$.  Denote by
$Qu^{\geq 0}(\mathfrak{D},\Phi_{\underline{c}})$ (resp. $Qu^{\leq 0}(\mathfrak{D},\Phi_{\underline{c}})$) the subalgebra of $Qu(\mathfrak{D},\lambda,\Phi_{\underline{c}})$ generated by $G$ and $E_i,1\leq i\leq n$ (resp. $G$ and $E_i,1\leq i\leq n$). The following isomorphisms are obvious.
\begin{eqnarray}
Qu^{\geq 0}(\mathfrak{D},\Phi_{\underline{c}})\cong u(\mathfrak{D}',0,0,\Phi_{\underline{c}}),\\
Qu^{\leq 0}(\mathfrak{D},\Phi_{\underline{c}})\cong u(\mathfrak{D}'',0,0,\Phi_{\underline{c}}),
\end{eqnarray}
where $\mathfrak{D}'=\mathfrak{D}(\mathbbm{G},(h_i)_{1\leq i\leq n},(\chi_i)_{1\leq i\leq n},A),$
$\mathfrak{D}''=\mathfrak{D}(\mathbbm{G},(h_i)_{n+1\leq i\leq 2n},(\chi_i)_{n+1\leq i\leq n},A).$ The two radically graded quasi-Hopf algebras are called
{\bf{the half small quasi-quantum groups}}.
Keep the assumption of $G$ and $A$ as above, we have the following.
\begin{proposition}
Let $\mathfrak{D}=\mathfrak{D}(\mathbbm{G},(h_i)_{1\leq i\leq n},(\chi_i)_{1\leq i\leq n},A)$ be a datum of finite Cartan type such that $h_i\in G,1\leq i\leq n,$ and $\chi_i(h_j)=\chi_j(h_i), 1\leq i,j\leq n.$ Then for any nonzero $\underline{c}\in \Gamma(\mathfrak{D}),$ $u(\mathfrak{D},0,0,\Phi_{\underline{c}})$ is a half small quasi-quantum group.
\end{proposition}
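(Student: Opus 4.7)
The plan is to realize $u(\mathfrak{D},0,0,\Phi_{\underline{c}})$ as the upper triangular half of an explicitly constructed small quasi-quantum group of rank $2n$. First, I would construct a doubled datum $\tilde{\mathfrak{D}} = \mathfrak{D}(\mathbbm{G}, (\tilde h_i)_{1\le i\le 2n}, (\tilde\chi_i)_{1\le i\le 2n}, \mathbf{D}(A))$ by setting $\tilde h_i = \tilde h_{n+i} = h_i$, $\tilde\chi_i = \chi_i$ and $\tilde\chi_{n+i} = \chi_i^{-1}$ for $1\le i\le n$. Writing $\tilde q_{ij} = \tilde\chi_j(\tilde h_i)$, the axiom $\tilde q_{ij}\tilde q_{ji} = \tilde q_{ii}^{\tilde a_{ij}}$ splits into four cases according to the block structure of $\mathbf{D}(A)$: the two diagonal-block cases follow from the Cartan type condition on $\mathfrak{D}$ (directly for the upper block, and by inverting both sides for the lower block); each cross-block case reduces to $\tilde q_{ij}\tilde q_{ji}=1$, which unfolds to $\chi_l^{-1}(h_i)\chi_i(h_l)=1$, and this is exactly the symmetry hypothesis $\chi_i(h_l)=\chi_l(h_i)$.

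Next, I would verify that the same $\underline{c}$ lies in $\Gamma(\tilde{\mathfrak{D}})$. The conditions \eqref{3.7}--\eqref{3.9} for the doubled datum differ from those for $\mathfrak{D}$ only through additional Cartan indices $n+1,\ldots,2n$, for which $\tilde s_{n+l,j}=s_{l,j}$ and $\tilde r_{n+l,j}\equiv -r_{l,j}\pmod{\mathbbm{m}_j}$. Condition \eqref{3.9} does not involve the Cartan indices at all, and \eqref{3.8} for a new index $l=n+k$ differs from the original only by a sign on $r_{k,j}$, hence is equivalent. The crucial step is \eqref{3.7}: the hypothesis $h_i\in G$ gives $s_{i,j}=m_j t_{i,j}\equiv 0\pmod{m_j}$, which combined with the original $s_{i,j}\equiv c_jr_{i,j}\pmod{m_j}$ forces $c_jr_{i,j}\equiv 0\pmod{m_j}$; the required new congruence $s_{l,j}\equiv -c_jr_{l,j}\pmod{m_j}$ then holds trivially because both sides are $\equiv 0$.

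Finally, I would pick linking parameters $\lambda_{i,n+i}\in\k^\times$ for $1\le i\le n$ (and all other $\lambda_{ij}=0$) and check that they form a family of modified linking parameters for $\tilde{\mathfrak{D}}$: by construction $i\nsim n+i$, $\tilde\chi_i\tilde\chi_{n+i}=\varepsilon$, and $\tilde h_i\tilde h_{n+i}=h_i^2\in G$ with $h_i^2\ne 1$ since Lemma \ref{L2.5} forces $q_{ii}=\chi_i(h_i)$ to have odd order $N>1$, so $h_i$ has odd order $>1$. Theorem \ref{T3.4} then produces a well-defined small quasi-quantum group $Qu(\tilde{\mathfrak{D}},\lambda,\Phi_{\underline{c}})$, and by definition its upper half $Qu^{\ge 0}(\tilde{\mathfrak{D}},\Phi_{\underline{c}})$ is the subalgebra generated by $G$ and $X_1,\ldots,X_n$. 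In view of Proposition \ref{P5.2}, this subalgebra is presented by \eqref{e3.20}--\eqref{e3.23} restricted to the first $n$ generators with $\lambda=\mu=0$, which is precisely $u(\mathfrak{D},0,0,\Phi_{\underline{c}})$.

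The main obstacle is the second step: the interplay between the hypothesis $h_i\in G$ and the congruences defining $\Gamma$ has to be disentangled carefully to transfer $\underline{c}$ from $\Gamma(\mathfrak{D})$ to $\Gamma(\tilde{\mathfrak{D}})$. The remaining steps are largely formal once the doubled datum is written down explicitly.
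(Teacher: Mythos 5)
Your proposal is correct and follows essentially the same route as the paper: double the datum to $\mathbf{D}(A)$ with $\tilde h_{n+i}=h_i$, $\tilde\chi_{n+i}=\chi_i^{-1}$, use the symmetry hypothesis for the cross-block Cartan condition, use $h_i\in G$ (so $s_{ij}\equiv 0\bmod m_j$) to keep $\underline{c}$ in $\Gamma$ of the doubled datum, link $i$ with $n+i$, and identify the original algebra as $Qu^{\geq 0}$. Your second step in fact spells out the equivalence $\Gamma(\mathfrak{D})=\Gamma(\tilde{\mathfrak{D}})$ more carefully than the paper does.
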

\begin{proof}
We need to show that there exists a small quasi-quantum group $Qu(\mathfrak{D}',\lambda, \Phi_c)$ such that $u(\mathfrak{D},0,0,\Phi_{\underline{c}})\cong Qu^{\geq 0}(\mathfrak{D}', \Phi_{\underline{c}}).$ Define $h_{i+n}=h_i$ and $\chi_{i+n}=\chi_i^{-1}$ for $1\leq i\leq n.$ For  $1\leq i,j\leq n,$ we have the following:
$$\chi_i(h_{j+n})\chi_{j+n}(h_i)=\chi_i(h_j)\chi_j^{-1}(h_i)=1.$$
It is clear that
$\mathfrak{D}'=\mathfrak{D}(\mathbbm{G},(h_i)_{1\leq i\leq 2n},(\chi_i)_{1\leq i\leq 2n},D(A))$ is a datum of finite Cartan type. For each $1\leq i\leq n$, let $h_i=\prod_{j=1}^n\mathbbm{g}^{s_{ij}}$.  Since $h_i\in G$, we have  $s_{ij}\equiv 0 \mod m.$  It follows that  $\Gamma(\mathfrak{D})=\Gamma(\mathfrak{D}').$
Moreover,  $h_ih_{i+j}=h_i^2\in G$ and $\chi_i\chi_{i+n}=\chi_i\chi_i^{-1}=\varepsilon$. Thus,  we obtain a family of linking parameters $\lambda=(\lambda_{ij})_{1\leq i<j\leq 2n, i\nsim j}$ for $\mathfrak{D}'$ such that $\lambda_{i,j}\neq 0$ if and only if $j=i+n.$  Since each connected component of $I=\{1,\cdots,n\}$ satisfies \eqref{2.7},  we have  $h_ih_{i+n}=h_i^2\neq 1.$ This implies that $\lambda$ is a family of modified linking parameters. Therefore,
we  obtain a small quasi-quantum group $Qu(\mathfrak{D}',\lambda,\Phi_{\underline{c}})$  such that $u(\mathfrak{D},0,0,\Phi_{\underline{c}})\cong Qu^{\geq 0}(\mathfrak{D}', \Phi_{\underline{c}}).$
\end{proof}

\subsection{Examples of genuine small quasi-quantum groups} Let $A=(a_{ij})_{1\leq i,j\leq n}$ be a finite Cartan matrix, $(d_1,\cdots,d_n)$ a vector  with  elements in $\{1,2,3\}$ such that $(d_ia_{ij})_{1\leq i,j\leq n}$ is symmetric. Let $q$ be an $N$-th primitive root of unity, where $N$ is an odd positive integer. Moreover, in case $A$ has a connected component of type $G_2,$ we will add  one more assumption that $N$ is prime to $3.$ Let $p$ be a positive odd integer and $m=pN.$ Choose $\zeta_{m^2}$ and $\zeta_m$ such that $\zeta_{m^2}^m=\zeta_m$, $\zeta_m^p=q.$ Let $G=Z_m^n=\langle g_1\rangle\times \cdots\times \langle g_n\rangle$,  and $l$, $1\leq l<m$,  be an integer such that $lp \neq 0 \mod m$. Define characters $\{\chi_i| 1\leq i\leq n\}$ on $\mathbbm{G}$ as follows:
\begin{equation}
\chi_i(\mathbbm{g}_j)=\left\{
                        \begin{array}{ll}
                          \zeta_{m^2}^{2pd_i}, & \hbox{if $i=j$;} \\
                          \zeta_{m^2}^{pd_ia_{ij}}, & \hbox{if $i\neq j$ and $a_{ij}\neq 0$;} \\
                           1, & \hbox{if $a_{ij}=0$.}
                        \end{array}
                      \right.
\end{equation}
With the above notations, it is easy to verify that $\mathfrak{D}=\mathfrak{D}(\mathbbm{G},(h_i)_{1\leq i\leq 2n},(\chi_i)_{1\leq i\leq 2n},\mathbf{D}(A))$ is a datum of finite Cartan type, where
$h_i=h_{i+n}=\mathbbm{g}_i^{ml}$ and $\chi_{i+n}=\chi_i^{-1}$ for $1\leq i\leq n.$
\begin{lemma}\label{L4.3}
There are nonzero elements $\underline{c}=(c_i,c_{jk})_{1\leq i\leq n,1\leq j<k\leq n}$ in $\Gamma(\mathfrak{D})$ such that $c_i=k_iN $ for  some $1\leq k_i<p,\ 1\leq i\leq n.$
\end{lemma}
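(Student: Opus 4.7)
The plan is to exhibit explicit families $\underline{c}$ of the prescribed form and verify directly that they satisfy the defining congruences \eqref{3.7}--\eqref{3.9} of $\Gamma(\mathfrak{D})$. First I would extract the integer matrices $(s_{ij})$ and $(r_{ij})$ attached to the datum: for $1\le i\le n$ one has $s_{ij}=ml\,\delta_{ij}$, $r_{ii}=2pd_i$, and $r_{ij}=pd_ia_{ij}$ when $i\ne j$ (with $r_{ij}=0$ if $a_{ij}=0$); for $n+1\le i\le 2n$ one has $s_{ij}=ml\,\delta_{i-n,j}$ and $r_{ij}\equiv -r_{i-n,j}\pmod{m^2}$, since $\chi_{i+n}=\chi_i^{-1}$ and $h_{i+n}=h_i$. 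Because every $m_k$ equals $m$, condition \eqref{3.9} reduces to $c_{ij}m\equiv 0\pmod m$, which is automatic; and \eqref{3.8} becomes vacuous once I set $c_{ij}=0$ for all $1\le i<j\le n$. The requirement $c_{rst}=0$ is already part of the definition of $\Gamma(\mathfrak{D})$.

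The substantive constraints come from \eqref{3.7}. After reducing modulo $m=pN$ and using $ml\equiv 0\pmod m$, these split into the two families
$$2c_id_i\equiv 0\pmod N,\qquad c_jd_ia_{ij}\equiv 0\pmod N,$$
for $1\le i,j\le n$ with $i\ne j$. The shifted indices $n+1\le i\le 2n$ contribute no new relations, since $r_{ij}\equiv -r_{i-n,j}\pmod m$ produces precisely the same congruences. Under the standing hypotheses that $N$ is odd and, whenever a $G_2$ component is present, also prime to $3$, each integer $2d_i$ (with $d_i\in\{1,2,3\}$) is a unit modulo $N$. Hence the first family is equivalent to $c_i\equiv 0\pmod N$, i.e.\ $c_i=k_iN$ for some $0\le k_i<p$, and given this the second family is automatic since $d_ia_{ij}$ is an integer and $N\mid c_j$.

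Choosing then any $k_1,\ldots,k_n\in\{1,\ldots,p-1\}$, together with $c_{ij}=0$ and $c_{rst}=0$, produces a nonzero element $\underline{c}\in\Gamma(\mathfrak{D})$ of the announced form. There is no deep obstacle in this lemma; the only care needed is in the bookkeeping that the doubled datum $\mathbf{D}(A)$ together with $h_{i+n}=h_i$, $\chi_{i+n}=\chi_i^{-1}$ introduces no congruences beyond those coming from the first $n$ indices. Once this is checked, the essential arithmetic input is simply the invertibility of $2d_i$ modulo $N$, which is exactly what the oddness of $N$ (and coprimality with $3$ in type $G_2$) is designed to ensure.
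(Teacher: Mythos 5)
Your proposal is correct and follows essentially the same route as the paper: both reduce membership in $\Gamma(\mathfrak{D})$ to the congruences \eqref{3.7} (the conditions \eqref{3.8}--\eqref{3.9} being trivially solvable), observe that the doubled indices give the same congruences up to sign, and conclude that the solution set for each $c_i$ is $\{kN \mid 0\le k<p\}$. Your write-up is in fact slightly more explicit than the paper's, which leaves the invertibility of $2d_i$ modulo $N$ implicit behind the phrase ``it is not difficult to see.''
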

\begin{proof}
It is obvious that \eqref{3.8}-\eqref{3.9} are solvable for $(c_{jk})_{1\leq j<k\leq n}.$ Thus, it suffuces to prove that Equation \eqref{3.7} has solutions $c_i=k_iN $ for some $1\leq k_i<p,\ 1\leq i\leq n.$  Now the equations in variable $c_i$ are
\begin{eqnarray}
&&ml\equiv 2c_ipd_i \mod m,\label{4.5}\\
&&ml\equiv -2c_ipd_i \mod m,\label{4.6}\\
&&0\equiv c_ipd_ia_{ij} \mod m,\ i\neq j,a_{ij}\neq 0,\label{4.7}\\
&&0\equiv -c_ipd_ia_{ij} \mod m,\ i\neq j,a_{ij}\neq 0.\label{4.8}
\end{eqnarray}
It is not difficult to see that Equations \eqref{4.5}-\eqref{4.8} have  a set of solutions
\begin{equation}\label{4.9}
\{c_i=kN|0\leq k<p\}.
\end{equation} We have thus  proved the lemma.
\end{proof}
We need the following lemma.
\begin{lemma}\label{L4.4}
There exists a family of modified linking parameters $\lambda$ for $\mathfrak{D}$ satisfying condition:
$\lambda_{ij}\neq 0$ if and only if $i+n=j.$
\end{lemma}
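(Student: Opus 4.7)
The plan is to define $\lambda$ by setting $\lambda_{i,i+n}$ to any nonzero scalar for $1\le i\le n$ and $\lambda_{ij}=0$ for every other pair $i<j$ with $i\nsim j$, and then to check that this assignment satisfies all the constraints in Definition 2.11 (ordinary linking parameters) together with the extra constraint \eqref{3.12} (modified).

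First I would record the three things that need to be verified to license a nonzero $\lambda_{i,i+n}$: (a) $i$ and $i+n$ lie in different connected components of the Dynkin diagram of $\mathbf{D}(A)$, so $i\nsim i+n$; (b) $\chi_i\chi_{i+n}=\varepsilon$; (c) $h_ih_{i+n}\ne 1$ in $\mathbbm{G}$. Item (a) is immediate from the block-diagonal form of $\mathbf{D}(A)$. Item (b) is built into the definition of $\mathfrak{D}$ since $\chi_{i+n}=\chi_i^{-1}$. Item (c) reduces to checking that $\mathbbm{g}_i^{2ml}\ne 1$, i.e.\ $2ml\not\equiv 0 \pmod{m^2}$; since $m=pN$ is odd, $\gcd(2,m)=1$, so this amounts to $l\not\equiv 0\pmod m$, which holds by the hypothesis $1\le l<m$.

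Next, for the modified condition \eqref{3.12}, I would verify that $h_ih_{i+n}\in G$. Recall that via \eqref{3.3} the subgroup $G\subset \mathbbm{G}$ is generated by the elements $\mathbbm{g}_i^m$; since $h_ih_{i+n}=\mathbbm{g}_i^{2ml}$ and $m\mid 2ml$, this element lies in $G$. Hence the modified-linking condition is satisfied on the nonzero entries.

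Finally, for every pair $(i,j)$ with $i<j$, $i\nsim j$, and $(i,j)\ne(i,i+n)$, I just set $\lambda_{ij}=0$; all constraints in Definitions 2.11 and 3.3 are vacuously satisfied on such pairs. Therefore the tuple $\lambda=(\lambda_{ij})$ so defined is a family of modified linking parameters with $\lambda_{ij}\ne 0$ exactly when $j=i+n$. There is no real obstacle in this argument; the only point requiring care is the arithmetic verification in (c) above, and this is where the oddness of $m$ (which comes from $m=pN$ with $p,N$ both odd) is essential.
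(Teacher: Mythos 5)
Your proof is correct and follows essentially the same route as the paper: verify $i\nsim i+n$, $\chi_i\chi_{i+n}=\varepsilon$, $h_ih_{i+n}=g_i^{2l}\in G$ and $h_ih_{i+n}\neq 1$, then set all other entries to zero. The only (harmless) difference is in the last inequality: the paper deduces $2l\not\equiv 0 \bmod N$ from the hypothesis $lp\not\equiv 0\bmod m$ and the oddness of $N$, while you deduce $2l\not\equiv 0\bmod m$ directly from $1\leq l<m$ and the oddness of $m$; both suffice.
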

\begin{proof}
Note  that for each $1\leq i\leq n,$ we have  $\chi_i\chi_{i+n}=\varepsilon$ because $\chi_{i+n}=\chi_i^{-1}.$ The fact that  $lp\neq 0 \mod m,$  implies that $l\neq 0 \mod N$. Hence $2l\neq 0 \mod N$ because $N$ is odd. It follows that $h_ih_{i+n}=\mathbbm{g}_i^{2ml}=g_i^{2l}\in G$ and $h_ih_{i+n}\neq 1.$  By definition, there exists a family of modified linking parameters $\lambda$ for $\mathfrak{D}$ such that
$\lambda_{ij}\neq 0$ if and only if $i+n=j.$
\end{proof}
From Lemmas \ref{L4.3}-\ref{L4.4} and the definition of small quasi-quantum groups, we obtain the following.
\begin{proposition}\label{P4.5}
Let $\underline{c}$ be a nonzero parameter in $\Gamma(\mathfrak{D}),$ and $\lambda$  a a family of modified linking parameters such that $\lambda_{ij}\neq 0$ if and only if $i+n=j$. Then $u(\mathfrak{D},\lambda,0,\Phi_{\underline{c}})$ is a small quasi-quantum group.
\end{proposition}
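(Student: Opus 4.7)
My plan is to directly cross-check that the quasi-Hopf algebra $u(\mathfrak{D},\lambda,0,\Phi_{\underline{c}})$ satisfies all the hypotheses in the definition of a small quasi-quantum group given at the beginning of Section 5. That definition requires three things of the data: (i) $\mathfrak{D}$ is a datum of finite Cartan type with Cartan matrix $\mathbf{D}(A)$ satisfying $h_i=h_{n+i}\in G$ and $\chi_i=\chi_{i+n}^{-1}$ for $1\leq i\leq n$; (ii) $\underline{c}\in\Gamma(\mathfrak{D})$ is nonzero; and (iii) $\mu=0$ while $\lambda_{ij}\neq 0$ exactly when $j=i+n$.

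The first step is to verify that $\mathfrak{D}(\mathbbm{G},(h_i)_{1\leq i\leq 2n},(\chi_i)_{1\leq i\leq 2n},\mathbf{D}(A))$ is indeed a datum of Cartan type in the sense of Definition 2.4. The relations $h_i=h_{i+n}=\mathbbm{g}_i^{ml}=g_i^l\in G$ and $\chi_i=\chi_{i+n}^{-1}$ are built into the setup, so only the identity $\chi_j(h_i)\chi_i(h_j)=\chi_i(h_i)^{\mathbf{D}(A)_{ij}}$ needs checking. I would split this into three cases according to which diagonal block the pair $(i,j)$ lies in. Inside each of the two diagonal blocks one computes $\chi_i(h_i)=q^{2d_il}$ and $\chi_j(h_i)\chi_i(h_j)=q^{(d_ia_{ij}+d_ja_{ji})l}$, so the required identity reduces to the symmetrizability $d_ia_{ij}=d_ja_{ji}$ built into the choice of $(d_1,\ldots,d_n)$. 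The off-diagonal block has Cartan entries zero, so one needs $\chi_{j+n}(h_i)\chi_i(h_{j+n})=1$, which follows from $\chi_{j+n}=\chi_j^{-1}$ combined with the same symmetrizability.

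For the remaining two conditions, the existence of a nonzero $\underline{c}\in\Gamma(\mathfrak{D})$ is exactly the content of Lemma \ref{L4.3}, and the existence of a family $\lambda$ of modified linking parameters with the prescribed support $\lambda_{ij}\neq 0\iff j=i+n$ is the content of Lemma \ref{L4.4}. Since $\mu=0$ by hypothesis, all three conditions in the definition of a small quasi-quantum group are met, and Theorem \ref{T3.4} then guarantees that $u(\mathfrak{D},\lambda,0,\Phi_{\underline{c}})$ is a well-defined finite-dimensional quasi-Hopf algebra of Cartan type. There is no real obstacle here: the proposition is essentially a packaging of Lemmas \ref{L4.3} and \ref{L4.4} once the datum axiom of Definition 2.4 has been verified for the doubled configuration.
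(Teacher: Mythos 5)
Your proposal is correct and follows essentially the same route as the paper, which presents the proposition as an immediate consequence of Lemmas \ref{L4.3} and \ref{L4.4} together with the (stated but unproved) verification that the doubled datum $\mathfrak{D}$ is of finite Cartan type. Your explicit check of $q_{ij}q_{ji}=q_{ii}^{a_{ij}}$ in the diagonal and off-diagonal blocks via the symmetrizability $d_ia_{ij}=d_ja_{ji}$ just fills in the detail the paper labels ``easy to verify.''
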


 In what follows,  we let $Qu(\mathfrak{D},\lambda,\Phi_{\underline{c}})=u(\mathfrak{D},\lambda,0,\Phi_{\underline{c}})$, $\underline{c}\not=0$,  be a small quasi-quantum group given in Proposition \ref{P4.5}. We conjecture that  $Qu(\mathfrak{D},\lambda,\Phi_{\underline{c}})$ is  genuine.  At this moment, we are not able to prove it in general. But  we  have  the following partial result.

\begin{proposition}\label{P4.6}
Suppose $l>1$, $l|m$ and $l\nmid c_i$ for $1\leq i\leq n$.  Then $Qu(\mathfrak{D},\lambda,\Phi_{\underline{c}})$ is genuine.
\end{proposition}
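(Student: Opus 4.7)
The strategy is to reduce the question to the classification of $3$-cohomology on a finite abelian quotient of $G$, by passing to the quotient of $H := Qu(\mathfrak{D},\lambda,\Phi_{\underline{c}})$ by its Jacobson radical and analyzing the induced quasi-Hopf structure. First I would identify the Jacobson radical $I$ of $H$ with the two-sided ideal generated by $\{E_i,F_i : 1\leq i\leq n\}$. The linking relation \eqref{5.2} forces $h_i^{-1} = h_i$ in $H/I$, so $H/I \cong \k\overline{G}$ where $\overline{G} := G/\langle h_i^2 : 1\leq i\leq n\rangle$. Since $m$ is odd and $l\mid m$, the integer $m/l$ is odd, so $\gcd(m,2l) = l$ and $\overline{G} \cong \Z_l^n$. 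Because $\k\overline{G}$ is semisimple and $I$ is nilpotent, $I = \rad(H)$. A direct inspection of \eqref{5.5}, \eqref{5.6}, and \eqref{5.8} gives $\bigtriangleup(I) \subset H\otimes I + I\otimes H$, $S(I)\subset I$, and $\varepsilon(I) = 0$, so $I$ is a quasi-Hopf ideal and the quotient carries an induced associator $\overline{\Phi}$.

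The second step is to compute $\overline{\Phi}$ explicitly. The projection $\pi\colon \k G\to\k\overline{G}$ sends $1_g$ to $1_{\bar{g}}$ when $\chi_g$ is trivial on $\langle h_i^2\rangle$, and to $0$ otherwise; the surviving elements are precisely those of the form $g = \prod_i g_i^{a_i(m/l)}$ with $a_i \in \{0,\ldots,l-1\}$. Substituting such elements into the formula for $\phi_{\underline{c}}$ of Proposition \ref{P2.16}, and using $\zeta_m^{m/l} = \zeta_l$ together with the identity $\lfloor (a+b)(m/l)/m\rfloor = \lfloor(a+b)/l\rfloor$, one checks that
\[
(\pi\otimes\pi\otimes\pi)(\Phi_{\underline{c}}) \;=\; \sum_{\bar{f},\bar{g},\bar{h}\in\overline{G}} \phi_{\bar{\underline{c}}}(\bar{f},\bar{g},\bar{h})\, 1_{\bar{f}}\otimes 1_{\bar{g}}\otimes 1_{\bar{h}},
\]
with reduced parameters $\bar{c}_i = c_i \bmod l$ and $\bar{c}_{ij} = c_{ij} \bmod l$. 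The hypothesis $l\nmid c_i$ gives $\bar{c}_i\neq 0$; hence $\bar{\underline{c}}\neq 0$, and by Proposition \ref{P2.16} the class of $\phi_{\bar{\underline{c}}}$ in $H^3(\overline{G},\k^*)$ is nontrivial.

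Finally, suppose for contradiction that $H^J$ is a Hopf algebra for some twist $J\in H\otimes H$. Since $I$ is a two-sided ideal of the underlying algebra (which is unaffected by $J$) and conjugation by the invertible elements $J,\beta_J$ preserves two-sided ideals, $I$ remains a quasi-Hopf ideal of $H^J$, hence a Hopf ideal. The Hopf quotient $H^J/I$ is then naturally isomorphic to $(\k\overline{G})^{J_0}$ with $J_0 := (\pi\otimes\pi)(J)$; this forces $\phi_{\bar{\underline{c}}}\cdot\partial(J_0) = 1$, contradicting the nontriviality of $\phi_{\bar{\underline{c}}}$. The main obstacle is the explicit cocycle computation in the second step: the projection formula for the idempotents $1_g$ in $\k G$ and the non-additive ceiling-function terms in $\phi_{\underline{c}}$ must be combined carefully to confirm the clean $\bmod l$ reduction, and one must also verify that the off-diagonal parameters $c_{ij}$ survive the reduction consistently so that the induced cocycle is genuinely of the form $\phi_{\bar{\underline{c}}}$ on $\Z_l^n$.
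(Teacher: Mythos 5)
Your proof is correct and follows essentially the same route as the paper: quotient by the ideal generated by the nilpotent generators (the linking relation forcing $h_i^2=1$, so the quotient is $\k\Z_l^n$), identify the induced associator as $\phi_{\bar{\underline{c}}}$ with $\bar c_i=c_i\bmod l\neq 0$, and conclude via Proposition \ref{P2.16} that the quotient, hence $Qu(\mathfrak{D},\lambda,\Phi_{\underline{c}})$, cannot be twisted to a Hopf algebra. The only cosmetic difference is that you push the twist $J$ through the projection $(\pi\otimes\pi)$ directly, whereas the paper phrases the same descent categorically via restriction of a fiber functor to the full tensor subcategory $\Rep_{\phi'_{\underline{c}}}(\k G')$; the substance is identical.
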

\begin{proof}
Let $I$ be the quasi-Hopf ideal of $Qu(\mathfrak{D},\lambda,\Phi_{\underline{c}})$ generated by $\{X_i|1\leq i\leq n\}.$ Set $\widetilde{u}=Qu(\mathfrak{D},\lambda,\Phi_{\underline{c}})/I$.  It is evident that $\widetilde{u}\cong \k G'$, where $G'=G/\langle g_i^{2l}-1|1\leq i\leq n\rangle.$ Since gcd$(2l,m)=l,$  we have $G'=G/\langle g_i^{l}-1|1\leq i\leq n\rangle.$ For an element $g\in G,$  we denote by $\overline{g}$  the corresponding element in the quotient group $G'$. Let $G^\circ$ be the subgroup of $G$ generated by $\{ g_i^{\frac{m}{l}}|1\leq i\leq n\}.$ Define a group isomorphism $\varphi:G^\circ\to G'$ by $\varphi(g_i^{\frac{m}{l}})=\overline{g_i}, 1\leq i\leq n.$

Let $f=\prod_{1}^{n}g_i^{n_i}$.  We have the following expression of the element $\overline{1_f}$ in $\k \overline{G}$:
\begin{eqnarray*}
\overline{1_f}&=&\frac{1}{|G|}\sum_{g\in G}\chi_f(g)\overline{g}\\
&=&\frac{1}{|G|}\sum_{0\leq i_j<l,0\leq k_j<\frac{m}{l},1\leq j\leq n}[\prod_j^n\zeta_{m}^{n_j(i_j+k_jl)}(\prod_j^n\overline{g_j}^{i_j})]\\
&=&\frac{1}{|G|}\prod_j^{n}[\sum_{0\leq i_j<l,0\leq k_j<\frac{m}{l}}\zeta_{m}^{n_j(i_j+k_jl)}(\overline{g_j}^{i_j})]
\end{eqnarray*}
It follows that $\overline{1_f}\neq 0$ if and only if $f\in G^\circ.$
Now we assume that  $f=\prod_jg_j^{a_j\frac{m}{l}}\in G^\circ$.  Then we have:
\begin{eqnarray}\label{e5.18}
\overline{1_f}&=&\frac{1}{|G|}\prod_j^{n}[\sum_{0\leq i_j<l,0\leq k_j<\frac{m}{l}}\zeta_{m}^{a_j\frac{m}{l}(i_j+k_jl)}(\overline{g_j}^{i_j})]\\
&=&\frac{m}{l|G|}\prod_j^{n}[\sum_{0\leq i_j<l}\zeta_{m}^{a_j\frac{m}{l}(i_j)}(\overline{g_j}^{i_j})] \notag \\
&=&\frac{1}{|G'|}[\sum_{0\leq i_j<l,1\leq j\leq n}[\prod_j^{n}\zeta_{l}^{a_j(i_j)}\prod_j^{n}(\overline{g_j}^{i_j})]\notag \\
&=&1_{\varphi(f)}.\notag
\end{eqnarray}
Define a $3$-cocycle $\phi'_{\underline{c}}$ on $G'$ as follows:
\begin{equation}
\phi'_{\underline{c}}(e,f,g)=\prod_{1\leq i\leq n}\zeta_{l}^{c_it_i[\frac{r_i+s_i}{l}]}
\prod_{1\leq j<k\leq n}\zeta_{l}^{c_{jk}t_k[\frac{r_j+s_j}{l}]}
\end{equation}
 for $e=\prod_{i=1}^n \overline{g_i}^{r_i}, \  f=\prod_{i=1}^n \overline{g_i}^{s_i}, \  g=\prod_{i=1}^n \overline{g_i}^{t_i}.$

Now we compute the associator $\overline{\Phi_{\underline{c}}}$ of $\k G'$:
\begin{eqnarray*}
\overline{\Phi_{\underline{c}}}&=&\sum_{e,f,g\in G}\phi_{\underline{c}}(e,f,g)\overline{1_e}\otimes \overline{1_f}\otimes \overline{1_g}\\
&=&\sum_{e,f,g\in G^\circ}\phi_{\underline{c}}(e,f,g)1_{\varphi(e)}\otimes 1_{\varphi(f)}\otimes 1_{\varphi(g)}\\
&=&\sum_{e,f,g\in G'}\phi'_{\underline{c}}(e,f,g)1_{e}\otimes 1_{f}\otimes 1_{g}.
\end{eqnarray*}
The second identity follows from \eqref{e5.18}, and $\overline{1_f}=0$ if $f\notin G^\circ.$ The third identity follows from the definition of $\varphi$.
Since $l\nmid c_i$ for $1\leq i\leq \theta,$  we know that $\phi'_{\overline{c}}$ is a $3$-cocycle on $G'$ according to Proposition \ref{P2.16}. Moreover,  $\phi'_{\overline{c}}$ is not a $3$-coboundary. Hence, $\widetilde{u}\cong (\k G',\overline{\Phi_{\underline{c}}})$ is a genuine quasi-Hopf algebra.

Suppose that $Qu(\mathfrak{D},\lambda,\Phi_{\underline{c}})$ is not genuine. Then $Qu(\mathfrak{D},\lambda,\Phi_{\underline{c}})$ is twist equivalent to a Hopf algebra, or equivalently the tensor category $\mathcal{M}$ of representations of $Qu(\mathfrak{D},\lambda,\Phi_{\underline{c}})$ has a fiber functor. Let $Rep_{\Phi'_{\overline{c}}}(\k G')$ be the representation category of the quasi-Hopf algebra $\widetilde{u}\cong \k G'.$ It is obvious that $Rep_{\Phi'_{\overline{c}}}(\k G')$ is a full tensor subcategory of $\mathcal{M}$. Hence $Rep_{\phi'_{\overline{c}}}(\k G')$ has a fiber functor as well.  Thus $\widetilde{u}$ is twist equivalent to a Hopf algebra,  a contradiction since $\widetilde{u}$ is genuine.
\end{proof}
It is easy to see that there are (infinitely)  many numbers $l,p,N$ to choose such that $l,m,c_i,\ 1\leq i\leq n$ satisfy the conditions in  Proposition \ref{P4.6}. Therefore,  we obtain many examples of genuine small quasi-quantum groups associated to each finite Cartan matrix.

\section{Nonradically graded genuine Quasi-Hopf algebras associated to connected Cartan matrices}
In this section, we construct many new examples of nonradically graded genuine quasi-Hopf algebras associated to each finite connected Cartan matrix. Explicitly, for each finite connected Cartan matrix $A$, we will show that there exists a Cartan datum $\mathfrak{D}$ associated to $A$, nontrivial modified root vector parameters $\mu$ for $\mathfrak{D}$ and a nonzero $\underline{c}\in \Gamma({\mathfrak{D}})$ such that $u(\mathfrak{D},0,\mu,\Phi_{\underline{c}})$ is a finite-dimensional genuine quasi-Hopf algebras. All these quasi-Hopf algebras have only trivial linking relations since the Cartan matrices are assumed to be connected. In the following, the groups $\mathbbm{G}=\langle \mathbbm{g}_1\rangle \times \cdots \times \langle \mathbbm{g}_n\rangle$ and $G=\langle g_1\rangle\times \cdots \times \langle g_n\rangle$ are defines the same as those in Subsection 3.1. For each fixed Cartan matrix $A$, the corresponding root system $R,$ positive root system $R^+$ with simple roots $\{\alpha_i|1\leq i\leq n\}$ are defined to be the same as those in Subsection 2.2. Throughout this section, $p,q,d$ are positive odd numbers satisfying $(p,q)=(p,d)=(q,d)=1$.

\subsection{Quasi-Hopf algebras of Cartan type $A_n$, $B_n$ and $C_n$} Notice that examples of non-radically graded quasi-Hopf algebras associated to $A_2,B_2 (=C_2),G_2$ have been given in Subsection 3.5, so in this subsection we always assume $n\geq 3$.
Let $A=(a_{ij})_{1\leq i,j\leq n}$ be the Cartan matrix of type $A_n, B_n$ or $C_n.$  Let
 $G=\langle g_1\rangle \times \langle g_2\rangle \times \cdots \times \langle g_{2n}\rangle$ be the abelian group determined by
\begin{equation}
|g_i|=\left\{
       \begin{array}{ll}
         pd, & \hbox{if $i=4k+1$ for some $k$;} \\
         qd, & \hbox{if $i=4k+2$ for some $k$;} \\
         pd^2, & \hbox{if $i=4k+3$ for some $k$;} \\
         qd^2, & \hbox{if $i=4k$ for some $k$.}
       \end{array}
     \right.
\end{equation}
For each $1\leq i\leq n,$ define an element in $\mathbbm{G}$ as follows:
\begin{equation}
h_i=\left\{
      \begin{array}{ll}
        (\mathbbm{g}_{2i-1}\mathbbm{g}_{2i})^{pq}, & \hbox{if $i$ is odd ;} \\
        (\mathbbm{g}_{2i-3}\mathbbm{g}_{2i-2}\mathbbm{g}_{2i-1}\mathbbm{g}_{2i}\mathbbm{g}_{2i+1}\mathbbm{g}_{2i+2})^{pq},
        & \hbox{if $i$ is even and $i\neq n$;} \\
        (\mathbbm{g}_{2i-3}\mathbbm{g}_{2i-2}\mathbbm{g}_{2i-1}\mathbbm{g}_{2i})^{pq}, & \hbox{if $i$ is even and $i=n.$}
     \end{array}
     \right.
\end{equation}
In order to give a datum of Cartan type associated to Cartan matrix $A,$ we need to introduce some characters on $\mathbbm{G}.$
Let $a,b,r$ be the numbers given in Table \ref{tab.2}. Define
\begin{equation}
\chi_1(\mathbbm{g}_j)=\left\{
         \begin{array}{ll}
           \zeta_{d^2}^a, & \hbox{$j=1,2$,} \\
           \zeta_{d^2}^r, & \hbox{$j=3,4$,} \\
           1, & \hbox{$j\geq 5$;}
         \end{array}
       \right.
\end{equation}
\begin{equation}
\chi_2(\mathbbm{g}_j)=\left\{
         \begin{array}{ll}
           \zeta_{d^2}^a, & \hbox{$j=1,2$,} \\
           \zeta_{d^4}, & \hbox{$j=3$,} \\
           \zeta_{d^4}^{-2ad^{2}-1}, & \hbox{$j=4$,} \\
           \zeta_{d^2}^b, & \hbox{$j=5,6$,} \\
           1, & \hbox{$j\geq 7$.}
         \end{array}
       \right.
\end{equation}
For $3\leq i<n,$ define
\begin{equation}
\chi_i(\mathbbm{g}_j)=\left\{
                        \begin{array}{ll}
                          \zeta_{d^2}^{-3b}, & \hbox{if $i$ is odd and $j=2i-3, 2i-2,2i+1, 2i+2$,} \\
                          \zeta_{d^2}^b, & \hbox{if $i$ is odd and $j=2i-1,2i$,} \\
                          \zeta_{d^2}^{b}, & \hbox{if $i$ is even and $j=2i-3, 2i-2,2i+1, 2i+2$,} \\
                          \zeta_{d^4}, & \hbox{if $i$ is even and $j=2i-1$,} \\
                          \zeta_{d^4}^{-2bd^2-1}, & \hbox{if $i$ is even and $j=2i$,} \\
                            1, & \hbox{otherwise.}
                        \end{array}
                      \right.
\end{equation}
When $n$ is odd, define
\begin{equation}
\chi_n(\mathbbm{g}_j)=\left\{
                        \begin{array}{ll}
                          \zeta_{d^2}^{-3b}, & \hbox{$j=2n-3, 2n-2$,} \\
                          \zeta_{d^2}^b, & \hbox{$j=2n-1,2n$,} \\
                            1, & \hbox{otherwise.}
                        \end{array}
                      \right.
\end{equation}
When $n$ is even, define
\begin{equation}
\chi_n(\mathbbm{g}_j)=\left\{
                        \begin{array}{ll}
                          \zeta_{d^2}^{b}, & \hbox{$j=2n-3, 2n-2$,} \\
                          \zeta_{d^4}, & \hbox{$j=2n-1$,} \\
                          \zeta_{d^4}^{-1}, & \hbox{$j=2n$,} \\
                            1, & \hbox{otherwise.}
                        \end{array}
                      \right.
\end{equation}
With these notations, we can give the following lemma.

{\setlength{\unitlength}{1mm}
\begin{table}[t]\centering
\caption{$a,b,r$ associated to $A_n,B_n,C_n.$.} \label{tab.2}
\vspace{1mm}
\begin{tabular}{r|l|l}
\hline
  &\text{$a,b,r$ associated to $A$}& \text{Cartan matrix $A$}\\
\hline
\hline
  1.  & $a=1,\ b=1,\ r=-3 $ &\ \ \ \ \ $A_n$\\
\hline
  2. &$a=2,\ b=1,\ r=-6 $& \ \ \ \ \  $B_n$\\
  \hline
  3. &$a=1,\ b=2,\ r=-4 $& \ \ \ \ \  $C_n$\\
  \hline
 \end{tabular}
\end{table}}
\begin{lemma}
 $\mathfrak{D}=\mathfrak{D}(\mathbbm{G},(h_i)_{1\leq i\leq n},(\chi_i)_{1\leq i\leq n},A)$ is a datum of Cartan type.
\end{lemma}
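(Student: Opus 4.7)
The plan is to verify the defining identity of Definition~\ref{D2.4}, namely $\chi_j(h_i)\chi_i(h_j) = \chi_i(h_i)^{a_{ij}}$ for all $1\leq i,j\leq n$. Since the construction is entirely explicit, the proof reduces to a finite case analysis organized by the parities of $i,j$ and by the distance $|i-j|$. The guiding observation is that each character $\chi_j$ is nontrivial only on the window $\{\mathbbm{g}_{2j-3},\ldots,\mathbbm{g}_{2j+2}\}$ (truncated at the boundary and modified for $\chi_2$), while $h_i$ is supported on $\{\mathbbm{g}_{2i-1},\mathbbm{g}_{2i}\}$ for $i$ odd and on $\{\mathbbm{g}_{2i-3},\ldots,\mathbbm{g}_{2i+2}\}$ for $i$ even. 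Hence almost every relation reduces to evaluating a character on a handful of generators.

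I begin by computing the diagonals. For $i$ odd, only the central pair contributes, giving $q_{11}=\zeta_{d^2}^{2apq}$ and $q_{ii}=\zeta_{d^2}^{2bpq}$ for $i\geq 3$. For $i$ even, the delicate cancellation $\zeta_{d^4}\cdot\zeta_{d^4}^{-2bd^2-1}=\zeta_{d^2}^{-2b}$ among the two central factors of $h_i$, combined with the four side factors contributing $\zeta_{d^2}^{4b}$, yields $q_{ii}=\zeta_{d^2}^{2bpq}$. The exponent $-2bd^2-1$ on $\chi_i(\mathbbm{g}_{2i})$ is precisely what is needed to produce this cancellation and to give a common diagonal $q_{ii}=\zeta_{d^2}^{2bpq}$ for all $i\geq 2$, which properly encodes the two different root lengths in types $B_n$ and $C_n$.

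Next I turn to the off-diagonal relations. If the windows of $\chi_j$ and $h_i$ are disjoint, then both $\chi_j(h_i)$ and $\chi_i(h_j)$ equal $1$, matching $q_{ii}^{0}$. Otherwise one expands the relevant characters on the overlap using the tabulated values, distinguishing whether the overlap lies on center positions (contributing the $\zeta_{d^4}$-type factors) or on side positions (contributing $\zeta_{d^2}^{b}$ or $\zeta_{d^2}^{-3b}$), and checks the Cartan identity term by term. In the typical interior adjacent case $(i,i+1)$ with $i\geq 2$, a uniform computation yields $\chi_{i+1}(h_i)\chi_i(h_{i+1})=\zeta_{d^2}^{-2bpq}=q_{ii}^{-1}$, matching $a_{i,i+1}=-1$.

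The main obstacle is the boundary adjacent case $(i,j)=(1,2)$, where the asymmetric definitions of $\chi_1$ and $\chi_2$ encode the Cartan entries that distinguish types $A_n$, $B_n$, and $C_n$. A direct computation gives
$$\chi_2(h_1)\,\chi_1(h_2)\;=\;\zeta_{d^2}^{(4a+2r)pq},$$
and one must match this with $q_{11}^{a_{12}}=\zeta_{d^2}^{2apq\cdot a_{12}}$. Solving $4a+2r\equiv 2a\cdot a_{12}\pmod{d^2}$ (with $a,b$ already forced by the diagonal computation) uniquely determines $r$ for each type, which is exactly how the entries of Table~\ref{tab.2} are derived; the parallel analysis at the boundary $i=n$, using the separate definition of $h_n$ when $n$ is even, treats the other short-versus-long root. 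Once all these cases are checked, $\mathfrak{D}$ satisfies Definition~\ref{D2.4} and is therefore a datum of Cartan type.
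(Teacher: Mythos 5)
Your overall strategy coincides with the paper's: the paper's entire proof is the single remark that the identity $q_{ij}q_{ji}=q_{ii}^{a_{ij}}$ ``follows a direct verification'', and you are carrying out that verification case by case. Your treatment of the diagonal entries, of the pair $(1,2)$ (including how Table~\ref{tab.2} is forced), and of the interior adjacent pairs is correct.

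There is, however, a genuine gap at the step ``Otherwise one expands the relevant characters on the overlap \dots and checks the Cartan identity term by term'': you never actually treat the non-adjacent pairs whose windows overlap, and that is exactly where the verification breaks down. Take $n\geq 4$ and $i<j$ both even with $j=i+2$ (e.g.\ $(i,j)=(2,4)$), so $a_{ij}=0$ and you need $\chi_j(h_i)\chi_i(h_j)=1$. Since $i\neq n$, the element $h_i$ contains $\mathbbm{g}_{2i+1}\mathbbm{g}_{2i+2}$, and $\chi_j$ takes the value $\zeta_{d^2}^{b}$ on $\mathbbm{g}_{2j-3}=\mathbbm{g}_{2i+1}$ and $\mathbbm{g}_{2j-2}=\mathbbm{g}_{2i+2}$ (both for $3\leq j<n$ and for $j=n$ even), so $\chi_j(h_i)=\zeta_{d^2}^{2bpq}$; symmetrically $h_j$ begins with $\mathbbm{g}_{2j-3}\mathbbm{g}_{2j-2}=\mathbbm{g}_{2i+1}\mathbbm{g}_{2i+2}$, where $\chi_i$ also equals $\zeta_{d^2}^{b}$, so $\chi_i(h_j)=\zeta_{d^2}^{2bpq}$. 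The product is $\zeta_{d^2}^{4bpq}$, which is $1$ only when $d^2\mid 4bpq$, i.e.\ (as $d$ is odd and prime to $p,q$, and $b\in\{1,2\}$) only when $d=1$. So the Cartan condition fails for these pairs with the data as printed; the off-diagonal entry would have to be $+2$, which no generalized Cartan matrix allows. Your argument therefore needs either to display this computation and explain why it closes (it does not), or to locate and repair the misprint in the definitions of the $h_i$ or $\chi_i$ that makes the supports of $h_i$ and $h_{i+2}$ interact. Until then the case analysis is incomplete at precisely the point you left implicit; note that for $n=3$ there is only one even vertex and your argument does go through.
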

\begin{proof} Let $q_{ij}=\chi_j(h_i)$ for all $1\leq i\neq j\leq n$.
By definition of datum of Cartan type, we only need to show
\begin{equation}
q_{ij}q_{ji}=q_{ii}^{a_{ij}}=q_{jj}^{a_{ji}}, \ 1\leq i\neq j\leq n,
\end{equation}
and it follows a direct verification.
\end{proof}
\begin{lemma}\label{L5.2}
$\Gamma(\mathfrak{D})$ is not empty.  For each family $\underline{c}=(c_i,c_{jk})_{1\leq i\leq 2n,1\leq j<k\leq 2n}$ in $\Gamma(\mathfrak{D}),$ we have $c_i\neq 0$ for  $1\leq i\leq 2n.$
\end{lemma}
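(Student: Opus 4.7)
The proof is a column-by-column analysis of the congruences (3.7)--(3.9) that define $\Gamma(\mathfrak{D})$. To start, the systems (3.8) and (3.9) are decoupled from the $c_\ell$'s: for each pair $1\le i<j\le 2n$ they form a single linear congruence in the single unknown $c_{ij}$. I would verify that the compatibility divisibility hypotheses of Proposition 3.15 are met, using $(p,q)=(p,d)=(q,d)=1$ together with the fact that every nonzero $r_{\ell j}$ is a $p^\bullet$- or $q^\bullet$-multiple of something coprime to $m_j$, and thereby produce solutions $c_{ij}\in[0,(m_i,m_j))$. These parameters are not claimed to be nonzero, so nothing further is needed for them.

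The core of the proof is equation (3.7), which for each fixed column $j$ is a system in the single unknown $c_j$. The plan is to read off the integers $s_{ij}$ (the exponent of $\mathbbm{g}_j$ in $h_i$) and $r_{ij}$ (defined by $\chi_i(\mathbbm{g}_j)=\zeta_{\mathbbm{m}_j}^{r_{ij}}$) directly from the formulas preceding the lemma, case-by-case according to $j\bmod 4$, which determines $\mathbbm{m}_j\in\{p^2d^2,q^2d^2,p^2d^4,q^2d^4\}$, and the parity of $i$, which controls the shape of $h_i$ and $\chi_i$. By direct inspection, each column $j$ contains at least one row $i$ with both $s_{ij}$ and $r_{ij}$ nonzero (the index $i$ is essentially $\lceil j/2\rceil$, with a second contributing neighbour), and any other nontrivial rows produce congruences which are consequences of this distinguished one.

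After cancelling the common $p^\bullet q^\bullet$-factor shared by both sides of the distinguished congruence, it reduces to an equation of the form $c_j\cdot u\equiv v\pmod{d}$ or $\pmod{d^2}$, where $u$ is a unit (by the coprimality of $p$, $q$, and the small integers $a,b,r$ from Table 2 with the odd number $d$) and $v$ is also a unit, namely a $\pm$-power of $p$ or $q$. Proposition 3.15 then yields a unique $c_j$ modulo $d$ or $d^2$, and this $c_j$ is automatically nonzero because $v$ is. This simultaneously establishes that $\Gamma(\mathfrak{D})$ is nonempty and that every element of $\Gamma(\mathfrak{D})$ has all $c_j$ nonzero, which is the content of the lemma.

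The main obstacle is the sheer amount of indexing: one must separately handle the cases according to $j\bmod 4$ and the parity of $i$, plus the edge corrections at $i\in\{1,2,n\}$ in the piecewise definitions of $h_i$ and $\chi_i$, and keep track of the Cartan-type dependence $a,b,r$ from Table 2. However, no deeper idea is involved beyond Proposition 3.15 and the coprimality of $p,q,d$, so the argument is essentially a careful generalisation of the rank-two computation already carried out in Example 3.16.
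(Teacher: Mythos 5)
Your proposal follows essentially the same route as the paper's proof: both dispose of the homogeneous congruences \eqref{3.8}--\eqref{3.9} trivially (the paper simply takes $c_{ij}=0$), then write out the congruences \eqref{3.7} column by column from the explicit $h_i$ and $\chi_i$, observe that the rows with $s_{ij}=0$ give automatically satisfied conditions, and invoke Proposition \ref{P3.15} together with $(p,q)=(p,d)=(q,d)=1$ to get solutions for the remaining distinguished congruence that are forced to be nonzero. The only cosmetic difference is that the paper records the relevant gcd's (e.g.\ $(ap^2,pd)=p$) explicitly rather than phrasing the reduction as cancellation down to a unit congruence modulo $d$ or $d^2$.
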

\begin{proof}
Notice that  Equations \eqref{3.8}-\eqref{3.9} always have solutions, for examples $c_{ij}=0$ for all $1\leq i<j\leq 2n.$ So we only need to prove that Equations \eqref{3.7} have solutions and $c_i\neq 0$ for $1\leq i\leq 2n.$ Let $m_i=|g_i|$ for each $1\leq i\leq 2n.$
It is clear that Equations \eqref{3.7} on variables $c_1,c_2,c_3,c_4$ are given by
\begin{eqnarray}
&&c_1ap^2\equiv pq \ \mod pd,\label{5.10} \\
&&c_2aq^2\equiv pq \ \mod qd,\label{5.11} \\
&&c_3p^2\equiv pq \ \mod pd^2, \label{5.12}\\
&&c_3rp^2d^2\equiv 0\ \mod pd^2, \label{5.13}\\
&&c_4(-2ad^2-1)q^2\equiv pq \mod qd^2, \label{5.14}\\
&&c_4rq^2d^2\equiv 0 \mod qd^2.\label{5.15}
\end{eqnarray}
When $3\leq i\leq n$ and $i$ is odd,  Equations \eqref{3.7} are
\begin{eqnarray}
&&c_{2i-1}bp^2\equiv pq \ \mod pd,\label{5.16}\\
&&c_{2i} bq^2\equiv pq \ \mod qd.\label{5.17}
\end{eqnarray}
When $3<i<n$ and $i$ is even,  Equations \eqref{3.7} become
\begin{eqnarray}
&&c_{2i-1}p^2\equiv pq \ \mod pd^2,\label{5.18}\\
&&c_{2i-1}(-3b) p^2d^2\equiv 0 \ \mod pd^2\label{5.19}\\
&&c_{2i}(-2bd^2-1)q^2\equiv pq \ \mod qd^2,\label{5.20}\\
&&c_{2i} (-3b)q^2d^2\equiv 0 \ \mod qd^2.\label{5.21}
\end{eqnarray}
When $i=n$ is even,  the equations  \eqref{3.7} are given by
\begin{eqnarray}
&&c_{2n-1}p^2\equiv pq \ \mod pd^2,\label{5.22}\\
&&c_{2n-1}(-3b) p^2d^2\equiv 0 \ \mod pd^2\label{5.23}\\
&&c_{2n}(-1)q^2\equiv pq \ \mod qd^2,\label{5.24}\\
&&c_{2n} (-3b)q^2d^2\equiv 0 \ \mod qd^2.\label{5.25}
\end{eqnarray}
It is obvious that any integers $c_{2i-1}, c_{2i}$ are solution of \eqref{5.13}, \eqref{5.15}, \eqref{5.19},\eqref{5.21}, \eqref{5.23} and \eqref{5.25}. Since $(ap^2,pd)=p, (aq^2,qd)=q,(p^2,pd^2)=p, ((-2ad^2-1)q^2, qd^2)=q$, so \eqref{5.10}-\eqref{5.15} have solutions by Proposition \eqref{P3.15}. Any solution $c_1, c_2, c_3 \ \mathrm{or}\ c_4$ of \eqref{5.10}-\eqref{5.15} should not be zero since $(p,d)=(q,d)=1$. Similarly, one can show that \eqref{5.16}-\eqref{5.18}, \eqref{5.20},\eqref{5.22} and \eqref{5.24} have nonzero solutions.
\end{proof}
\begin{lemma}\label{L5.3}
There exists modified root vector parameters $\mu$ for $\mathfrak{D}$ satisfying the condition:
\begin{equation}\label{6.25}
   \mu_{\alpha}\ \mathrm{is\ a\ nonzero\ parameter\ if\ and\ only\ if}\ \alpha=\alpha_i\ \mathrm{for\ some\ odd\ number}\ 1\leq i\leq n.
\end{equation}

\end{lemma}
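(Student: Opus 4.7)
The plan is to set $\mu_{\alpha_i}$ to an arbitrary nonzero scalar for each odd $i\in\{1,\ldots,n\}$ and $\mu_{\alpha}=0$ for all remaining $\alpha\in R^+$, and then to verify that this yields a modified root vector parameter family for $\mathfrak{D}$. The zero choices impose no constraint at all, so, combining the definition of ``modified'' with the hypothesis of Proposition~\ref{P2.9}, the problem reduces to verifying, for each odd $i$, the three requirements
\[
h_i^N\in G, \qquad h_i^N\neq 1, \qquad \chi_i^N=\varepsilon,
\]
where $N=N_J$ for the unique connected component $J$, since $A$ is assumed connected throughout this subsection.

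First I would determine $N$. By Lemma~\ref{L2.5}, $N=|q_{ii}|$ for any $i$; evaluating at $i=1$ gives $q_{11}=\chi_1(h_1)=\zeta_{d^2}^{2apq}$. Since $d$ is odd and $(d,p)=(d,q)=1$, this root of unity has order exactly $d^2$ in each of the three types $A_n$, $B_n$, $C_n$, so $N=d^2$.

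Next, for each odd $i$ I would verify the three conditions. The essential structural observation is the parity asymmetry built into $\mathfrak{D}$: when $i$ is odd, every value $\chi_i(\mathbbm{g}_j)$ is a power of $\zeta_{d^2}$ (the primitive $d^4$-th roots $\zeta_{d^4}$ appearing in the defining table occur \emph{only} for even $i$), so $\chi_i^{d^2}=\varepsilon$ at once. For the group-element side, $h_i=(\mathbbm{g}_{2i-1}\mathbbm{g}_{2i})^{pq}$, and for odd $i$ we have $|g_{2i-1}|=pd$ and $|g_{2i}|=qd$. Using the embedding $\iota$ of \eqref{3.3}, I obtain
\[
h_i^{d^2}=(\mathbbm{g}_{2i-1}^{pd})^{qd}(\mathbbm{g}_{2i}^{qd})^{pd}=g_{2i-1}^{qd}\,g_{2i}^{pd}\in G,
\]
and this element is nontrivial because $g_{2i-1}^{qd}=1$ would force $pd\mid qd$, i.e.\ $p\mid q$, contradicting $(p,q)=1$ and $p>1$.

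The main obstacle is purely organisational bookkeeping; the genuine content is the parity design of the datum $\mathfrak{D}$, which makes $\chi_i^{d^2}=\varepsilon$ hold exactly when $i$ is odd, so that the ``if and only if'' in \eqref{6.25} is both achievable by our construction and tight (for even $i$, Proposition~\ref{P2.9} forces $\mu_{\alpha_i}=0$ since $\chi_i^{d^2}(\mathbbm{g}_{2i-1})=\zeta_{d^4}^{d^2}=\zeta_{d^2}\neq 1$).
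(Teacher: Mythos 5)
Your proof is correct and follows essentially the same route as the paper's: reduce to the three conditions $h_i^{N}\in G$, $h_i^{N}\neq 1$, $\chi_i^{N}=\varepsilon$ with $N=d^2$, and verify them for odd $i$ using $h_i^{d^2}=(\mathbbm{g}_{2i-1}\mathbbm{g}_{2i})^{pqd^2}=g_{2i-1}^{qd}g_{2i}^{pd}$ and the fact that $\chi_i$ for odd $i$ takes values only in powers of $\zeta_{d^2}$. Your closing remark that evenness of $i$ actually forces $\mu_{\alpha_i}=0$ (via $\zeta_{d^4}^{d^2}\neq 1$) is a small bonus beyond what the paper records, though not needed for the existence statement.
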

\begin{proof}
Firstly, one can verify that $|\chi_i(h_i)|=d^2$ for $1\leq i\leq n.$ So for all $1\leq i\leq n$ and $i$ is odd, we have $h_i^{d^2}=(\mathbbm{g}_{2i-1}\mathbbm{g}_{2i})^{pqd^2}=g_{2i-1}^{qd}g_{2i}^{pd}\in G,$ $h_i^{d^2}\neq 1$ and $\chi_i^{d^2}=\varepsilon$, hence $\mu_{\alpha_i}\neq 0$.

We proved the lemma.
\end{proof}

\begin{proposition}\label{P5.4}
Let $\underline{c}\in \Gamma(\mathfrak{D})$ and $\mu$ a family of modified root vector parameters for $\mathfrak{D}$ satisfying condition \eqref{6.25}. Then $u(\mathfrak{D},0,\mu,\Phi_{\underline{c}})$ is a finite-dimensional nonradically graded genuine quasi-Hopf algebra associated to $A.$
\end{proposition}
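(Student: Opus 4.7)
The existence of $u(\mathfrak{D},0,\mu,\Phi_{\underline{c}})$ as a finite-dimensional quasi-Hopf algebra associated to $A$ is immediate from Theorem~\ref{T3.4}: the datum $\mathfrak{D}$ is of finite Cartan type by the construction at the beginning of this subsection, a nonzero $\underline{c}\in\Gamma(\mathfrak{D})$ is furnished by Lemma~\ref{L5.2}, $\mu$ is a family of modified root vector parameters by Lemma~\ref{L5.3}, and the linking parameters vanish because $A$ is connected. Lemma~\ref{L5.3} also provides some odd $i$ with $\mu_{\alpha_i}\neq 0$, so the root vector relation $X_{\alpha_i}^{d^2}=\mu_{\alpha_i}(1-h_{\alpha_i}^{d^2})$ is inhomogeneous; consequently, by the discussion after Theorem~\ref{T3.4}, the algebra is not radically graded.

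The plan for proving genuineness is to adapt the strategy of Proposition~\ref{P4.6}. Let $I$ be the quasi-Hopf ideal generated by $X_1,\dots,X_n$; that $I$ is quasi-Hopf follows from the explicit formulas \eqref{3.18} and \eqref{3.20} for $\Delta(X_i)$ and $\mathcal{S}(X_i)$, which respectively lie in $I\otimes H+H\otimes I$ and in $I$. In the quotient $\widetilde{u}:=u(\mathfrak{D},0,\mu,\Phi_{\underline{c}})/I$ every root vector $X_\alpha$ vanishes, so the root vector relations collapse to $\mathbf{u}_\alpha(\mu)=0$. For simple $\alpha_i$ with odd $i$ this reads $\mu_{\alpha_i}(1-h_{\alpha_i}^{d^2})=0$ and forces $g_{2i-1}^{qd}g_{2i}^{pd}=1$; for simple $\alpha_i$ with even $i$ it is automatic since $\mu_{\alpha_i}=0$; and for non-simple $\alpha\in R^+$ the iterated factorization $u^a=u^{a-e_l}u^{e_l}$ of Proposition~\ref{P2.9}, combined with the vanishing of $\mu_\beta$ outside the simple odd-index roots, should show that $\mathbf{u}_\alpha(\mu)$ is already a consequence of the simple-root relations. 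Thus $\widetilde{u}\cong(\k G',\overline{\Phi_{\underline{c}}})$, where $G'=G/\langle g_{2i-1}^{qd}g_{2i}^{pd}\colon i\text{ odd}\rangle$ and $\overline{\Phi_{\underline{c}}}$ is the associator descended from $\Phi_{\underline{c}}$.

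The final step is to verify that $(\k G',\overline{\Phi_{\underline{c}}})$ is genuine. The defining relations of $G'$ involve only pairs $(g_{2i-1},g_{2i})$ with odd $i$, so the generators $g_{2i-1},g_{2i}$ with even $i$ descend to $G'$ as independent cyclic factors. Since every $c_j$ is nonzero by Lemma~\ref{L5.2}, in particular those indexed by these surviving generators, the explicit cocycle formulas of Subsection~2.5 yield a representative $\phi_{\underline{c}'}$ of $[\overline{\Phi_{\underline{c}}}]\in H^3(G',\k^*)$ with at least one $c'_j\neq 0$, which by Proposition~\ref{P2.16} is not a $3$-coboundary. Hence $(\k G',\overline{\Phi_{\underline{c}}})$ is genuine, and any fiber functor on $\Rep(u(\mathfrak{D},0,\mu,\Phi_{\underline{c}}))$ would restrict to one on the full tensor subcategory $\Rep(\k G',\overline{\Phi_{\underline{c}}})$, contradicting this. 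Therefore $u(\mathfrak{D},0,\mu,\Phi_{\underline{c}})$ itself is genuine.

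The main obstacle lies in the combined verification of the two subtle points in the outline: on the algebraic side, controlling the non-simple root vector relations and showing they introduce no new identifications on $G$ beyond those coming from the simple odd-index roots, which demands a careful induction on $\mathrm{ht}(\alpha)$ via the recursive definition of $\mathbf{u}_\alpha(\mu)$ together with the modification hypothesis on the parameters; on the cohomological side, confirming that after descending to $G'$ the class of $\phi_{\underline{c}}$ does not accidentally become a coboundary, which amounts to explicit bookkeeping via the formula of Subsection~2.5 and the fact that the odd-$i$ quotient relations leave the even-$i$ portion of $\underline{c}$ intact.
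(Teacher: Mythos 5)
Your strategy is the same as the paper's: pass to a quotient that is a group algebra carrying a nontrivial associator, and then conclude via the fiber-functor argument exactly as in Proposition \ref{P4.6}. Where you diverge is in the choice of quotient ideal, and the paper's choice is precisely what makes the proof short. You quotient only by the ideal generated by $X_1,\dots,X_n$, which obliges you to (i) determine which group-algebra relations the root vector relations force in the quotient, and (ii) identify the descended cohomology class on $G'=G/\langle g_{2i-1}^{qd}g_{2i}^{pd}\colon i\ \mathrm{odd}\rangle$, whose odd-indexed blocks are copies of $\Z_d\times\Z_d$ sitting diagonally inside $\Z_{pd}\times\Z_{qd}$. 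You flag both as the ``main obstacle'' and leave them undone. The paper instead takes $\mathcal{I}$ to be generated by the $X_i$ \emph{together with} $1-g_j$ for $j\equiv 1,2\pmod 4$. Since each $h_{\alpha_i}^{d^2}=g_{2i-1}^{qd}g_{2i}^{pd}$ with $\mu_{\alpha_i}\neq 0$ involves only generators that are killed outright, the quotient is visibly $\k G'$ with $G'=\langle g_j\colon j\equiv 0,3\pmod 4\rangle$ a \emph{direct factor} of $G$, the descended associator is literally $\phi_{\underline{c}}|_{G'}$, and Lemma \ref{L5.2} plus Proposition \ref{P2.16} give non-coboundedness at once. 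Both of your flagged obstacles simply disappear.

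That said, your two gaps are closable, so this is a viable (if longer) route rather than a wrong one. For (i): for non-simple $\beta_l$ the leading term of \eqref{2.17} vanishes because \eqref{6.25} forces $\mu_l=0$, and the remaining terms $t^a_{b,c}\mu_b u^c$ together with the factorization $u^a=u^{a-e_l}u^{e_l}$ of Proposition \ref{P2.9} put every $\mathbf{u}_\alpha(\mu)$ in the ideal of $\k G$ generated by the elements $1-h_{\alpha_i}^{d^2}$ with $i$ odd, by induction on height; note that for $\mathbf{u}_\alpha(\mu)=u^{e_l}$ itself you must use \eqref{2.17} rather than the product formula, which only applies to non-basis multi-indices. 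For (ii): a nonzero integer $c_j$ does not by itself certify that a \emph{descended} class on a quotient group is nontrivial; you must exhibit the descended cocycle in the normal form of Proposition \ref{P2.16} on $G'$, which requires a computation of idempotent images of the type \eqref{e5.18}. The cleanest way to finish is to compose your quotient with the further projection killing the images of $g_j$ for $j\equiv 1,2\pmod 4$ --- at which point you have reproduced the paper's ideal $\mathcal{I}$ and its argument.
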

\begin{proof}
By Lemma \ref{L5.2}-\ref{L5.3} and Theorem \ref{T3.4}, $u(\mathfrak{D},0,\mu,\Phi_{\underline{c}})$ is a finite-dimensional nonradically graded quasi-Hopf algebra. So we only need to prove that $u(\mathfrak{D},0,\mu,\Phi_{\underline{c}})$ is a genuine quasi-Hopf algebra.

Let $\alpha$ be a positive root in $R^+$, then we have $\mu_{\alpha}=0$ if $\alpha\neq \alpha_i$ some odd number $1\leq i\leq n.$ Let $\mathcal{I}$ be the ideal of $u(\mathfrak{D},0,\mu,\Phi_{\underline{c}})$ generated by $$\{X_i, 1-g_j|1\leq i\leq n, j=1\ \mathrm{or}\ 2\ \mod 4\}.$$  It is obvious that $\mathcal{I}$ is a quasi-Hopf ideal of $u(\mathfrak{D},0,\mu,\Phi_{\underline{c}}).$ Denote by
$$G'=\langle g_i|i=0\ \mathrm{or}\ 3\ \mod 4\rangle.$$
Then it is obvious that
\begin{equation}
\k G'=u(\mathfrak{D},0,\mu,\Phi_{\underline{c}})/\mathcal{I}.
\end{equation}
Similar to the proof of Proposition \ref{P4.6}, one can show that the associator of the $\k G'$ is $\overline{\Phi_{\underline{c}}}=\sum_{e,f,g\in G'}\phi_{\underline{c}}|_{G'}(e,f,g)1_e\otimes 1_f\otimes 1_g.$ By lemma \ref{L5.2}, $\phi_{\underline{c}}|_{G'}$ is a not $3$-coboundary on $G'.$ Hence $(\k G',\overline{\Phi_{\underline{c}}})$ is a genuine quasi-Hopf algebra. This implies $u(\mathfrak{D},0,\mu,\Phi_{\underline{c}})$ is genuine, since otherwise $(\k G',\overline{\Phi_{\underline{c}}})=u(\mathfrak{D},0,\mu,\Phi_{\underline{c}})/\mathcal{I}$ should not be genuine, which is a contradiction.

\end{proof}

\subsection{Quasi-Hopf algebras of Cartan type $D_n$} In this subsection, we always assume that $n\geq 4.$
 Let $G=\langle g_1\rangle\times\cdots \times \langle g_{2n}\rangle$ such that
\begin{equation}
|g_i|=\left\{
        \begin{array}{ll}
          pd, & \hbox{$i=1\ \mathrm{or}\ 4k-1\ \mathrm{for}\ k\geq 1$;} \\
          qd, & \hbox{$i=2\ \mathrm{or}\ 4k\ \mathrm{for}\ k\geq 1$;} \\
          pd^2, & \hbox{$i=4k+1\ \mathrm{for}\ k\geq 1$;} \\
          qd^2, & \hbox{$i=4k+2\ \mathrm{for}\ k\geq 1$.}
        \end{array}
      \right.
\end{equation}
Let $(h_i)_{1\leq i\leq n}$ be a family of elements in $\mathbbm{G}$ given by
\begin{equation}
h_i=\left\{
      \begin{array}{ll}
        (\mathbbm{g}_{2i-1}\mathbbm{g}_{2i})^{pq}, & \hbox{$\mathrm{if}\ i=1\ \mathrm{or}\ 2k\ \mathrm{for}\ k\geq 1$;} \\
        (\prod_{j=1}^8\mathbbm{g}_j)^{pq}, & \hbox{$\mathrm{if}\ i=3$;} \\
        (\mathbbm{g}_{2i-3}\mathbbm{g}_{2i-2}\mathbbm{g}_{2i-1}\mathbbm{g}_{2i}\mathbbm{g}_{2i+1}\mathbbm{g}_{2i+2})^{pq}, &
         \hbox{$\mathrm{if}\ i=2k+1\neq n\ \mathrm{for}\ k\geq 2$;} \\
        (\mathbbm{g}_{2n-3}\mathbbm{g}_{2n-2}\mathbbm{g}_{2n-1}\mathbbm{g}_{2n})^{pq}, & \hbox{$\mathrm{if}\ i=n\ \mathrm{is}\ odd$.}
       \end{array}
      \right.
\end{equation}
Now define a family $(\chi_i)_{1\leq i\leq n}$ of characters on $\mathbbm{G}$ as following.

When $i=1,2,$
\begin{equation}
\chi_i(g_j)=\left\{
              \begin{array}{ll}
                \zeta_{d^2}, & \hbox{$j=2i-1,2i$;} \\
                \zeta_{d^2}^{-3}, & \hbox{$j=5,6$;} \\
                1, & \hbox{otherwise.}
              \end{array}
            \right.
\end{equation}
\begin{equation}
\chi_3(g_j)=\left\{
              \begin{array}{ll}
                \zeta_{d^2}, & \hbox{$j=1,2,3,4,7,8$;} \\
                \zeta_{d^4}, & \hbox{$j=5$;} \\
                \zeta_{d^4}^{-4d^2-1}, & \hbox{$j=6$;} \\
                1, & \hbox{otherwise.}
              \end{array}
            \right.
\end{equation}
When $3<i<n$,
\begin{equation}
\chi_i(\mathbbm{g}_j)=\left\{
              \begin{array}{ll}
                \zeta_{d^2}^{-3}, & \hbox{if $i$ is even and $j=2i-3,2i-2,2i+1,2i+2$ ;} \\
                \zeta_{d^2}, & \hbox{if $i$ is even and $j=2i-1,2i$ ;} \\
                \zeta_{d^4}, & \hbox{if $i$ is odd and $j=2i-1$;} \\
                \zeta_{d^4}^{-2d^2-1}, & \hbox{if $i$ is odd and $j=2i$;} \\
                \zeta_{d^2}, & \hbox{if $i$ is odd and $j=2i-3,2i-2,2i+1,2i+2$;} \\
                1, & \hbox{otherwise.}
              \end{array}
            \right.
\end{equation}
If $n$ is even,
\begin{equation}
\chi_n(\mathbbm{g}_j)=\left\{
              \begin{array}{ll}
                \zeta_{d^2}, & \hbox{if $j=2n-1,2n$ ;} \\
                \zeta_{d^2}^{-3}, & \hbox{if $j=2n-3,2n-2$ ;} \\
                1, & \hbox{otherwise.}
              \end{array}
            \right.
\end{equation}
If $n$ is odd,
\begin{equation}
\chi_n(\mathbbm{g}_j)=\left\{
              \begin{array}{ll}
                \zeta_{d^2}, & \hbox{if $j=2n-3,2n-2$ ;} \\
                \zeta_{d^4}, & \hbox{if $j=2n-1$ ;} \\
                \zeta_{d^4}^{-1}, & \hbox{if $j=2n$ ;} \\
                1, & \hbox{otherwise;}
              \end{array}
            \right.
\end{equation}
With these definitions, one can verify that $\mathfrak{D}= \mathfrak{D}(\mathbbm{G},(h_i)_{1\leq i\leq n},(\chi_i)_{1\leq i\leq n}, D_n)$ is a datum of Cartan type. Moreover, we have the following two lemmas, and the proofs, omitted, are similar as the proofs of Lemma \ref{L5.2} and \ref{L5.3}.

\begin{lemma}\label{L5.5}
$\Gamma(\mathfrak{D})$ is a nonempty set, and for each family $\underline{c}=(c_i,c_{jk})_{1\leq i\leq 2n,1\leq j<k\leq 2n}$ in $\Gamma(\mathfrak{D}),$ we have $c_i\neq 0$ for  $1\leq i\leq 2n.$
\end{lemma}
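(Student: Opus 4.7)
The plan is to mirror the argument of Lemma \ref{L5.2}. First I would note that the choice $c_{ij}=0$ for all $1\leq i<j\leq 2n$ trivially satisfies \eqref{3.8}--\eqref{3.9}, so the problem reduces to producing a solution of the system \eqref{3.7} in the variables $c_1,\ldots,c_{2n}$ and verifying the non-vanishing of each coordinate. Writing $h_i=\prod_j \mathbbm{g}_j^{s_{ij}}$ and $\chi_i(\mathbbm{g}_j)=\zeta_{\mathbbm{m}_j}^{r_{ij}}$ from the explicit data of $\mathfrak{D}$, every non-zero $s_{ij}$ equals $pq$, while the non-zero $r_{ij}$ equal either $p^2$, $q^2$, $-3p^2$, $-3q^2$, $-2d^2-1$ or $-4d^2-1$ (up to factors of $d^2$).

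I would then organize the case analysis by the position of the vertex $i$ in the $D_n$ diagram: the end vertices $i=1,2$; the branch vertex $i=3$, whose $h_3$ involves $\mathbbm{g}_1,\ldots,\mathbbm{g}_8$; the intermediate vertices $3<i<n$, split by the parity of $i$; and the terminal vertex $i=n$, split by the parity of $n$. In every case, \eqref{3.7} reduces either to a non-trivial congruence of the form $c_j p^2\equiv pq \pmod{pd}$, $c_j q^2\equiv pq \pmod{qd}$, $c_j p^2\equiv pq \pmod{pd^2}$, or $c_j\eta\, q^2\equiv pq \pmod{qd^2}$ with $\eta\in\{-2d^2-1,\,-4d^2-1\}$ a unit modulo $d^2$, or to an auxiliary congruence whose coefficient on $c_j$ is divisible by $d^2$ and hence collapses to $0\equiv 0$. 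Solvability of the non-trivial congruences then follows from Proposition \ref{P3.15}, since in each case the gcd of the leading coefficient with the modulus equals $p$ or $q$, which divides the right-hand side $pq$.

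To obtain $c_j\neq 0$ for every $j$, I would argue by contradiction: if some $c_j=0$ were a solution of a non-trivial congruence, then $pq\equiv 0$ modulo $pd$, $qd$, $pd^2$, or $qd^2$, forcing $d\mid q$ or $d\mid p$, contradicting $(p,d)=(q,d)=1$. The main obstacle is the bookkeeping around the trivalent vertex $i=3$, which is specific to $D_n$: one must check that the congruences imposed on $c_5,c_6,c_7,c_8$ by $h_3$ are compatible with those coming from $h_5,h_6,h_7,h_8$. This compatibility is automatic because all ``cross'' coefficients on $c_j$ (for $j$ outside the natural pair $\{2i-1,2i\}$ at vertex $i$) are divisible by $d^2$, so the cross-congruences collapse trivially and the system decouples vertex by vertex, reducing the problem to the same pattern already handled in Lemma \ref{L5.2}.
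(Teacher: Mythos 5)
Your proposal follows exactly the route the paper intends: the paper in fact omits the proof of Lemma \ref{L5.5}, stating only that it is ``similar to the proofs of Lemma \ref{L5.2} and \ref{L5.3}'', and your reduction to the congruences \eqref{3.7}, the appeal to Proposition \ref{P3.15} for solvability, and the coprimality argument $(p,d)=(q,d)=1$ for $c_j\neq 0$ reproduce the $A_n,B_n,C_n$ argument faithfully. One correction to your treatment of the branch vertex, which you rightly single out as the main point: it is not true that all ``cross'' coefficients at vertex $i=3$ are divisible by $d^2$. For $j\in\{1,2,3,4,7,8\}$ one has $s_{3j}=pq\neq 0$ and $\chi_3(\mathbbm{g}_j)=\zeta_{d^2}$ with $m_j\in\{pd,qd\}$, so $r_{3j}=p^2$ or $q^2$ and the resulting congruences $pq\equiv c_jp^2 \pmod{pd}$, etc., are genuinely nontrivial. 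Compatibility holds not because these collapse but because they coincide verbatim with the congruences imposed at the vertices $1,2,4,5$ (the character values are chosen so that whenever two vertices $i,i'$ both have $s_{ij}\neq 0$ for the same $j$, the exponents $r_{ij}$ and $r_{i'j}$ agree); the divisibility-by-$d^2$ collapse applies only to the pairs with $s_{ij}=0$ but $\chi_i(\mathbbm{g}_j)\neq 1$, such as $\chi_1(\mathbbm{g}_5)=\zeta_{d^2}^{-3}$ with $m_5=pd^2$. With that one justification repaired, the argument goes through and yields exactly one congruence per variable $c_j$, each solvable with gcd equal to $p$ or $q$, and each forcing $c_j\neq 0$.
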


\begin{lemma}\label{L5.6}
There exists a family of modified root vector parameters $\mu$ for $\mathfrak{D}$ satisfying the condition:
\begin{equation}\label{6.34}
   \mu_{\alpha}\ \mathrm{is\ a\ nonzero\ if\ and\ only\ if}\ \alpha=\alpha_1 \ \mathrm{or}\ \alpha_i\ \mathrm{for\ some\ even\ number}\ 1\leq i\leq n.
\end{equation}
 \end{lemma}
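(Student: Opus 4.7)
The plan is to mirror the proof of Lemma~\ref{L5.3} using the definitions specific to $D_n$. The family $\mu$ I construct will be supported only on the simple roots indicated; on every other positive root I set $\mu_\alpha = 0$, which is automatically consistent with the defining conditions of a family of modified root vector parameters. The work therefore reduces to verifying that the prescribed nonzero values at $\alpha_1$ and at $\alpha_i$ for even $i$ are admissible, i.e.\ that the three conditions $h_\alpha^{N_J} \in G$, $h_\alpha^{N_J} \neq 1$ and $\chi_\alpha^{N_J} = \varepsilon$ hold simultaneously; here $N_J = N$ is the single invariant attached to the connected Cartan matrix $D_n$.

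First I would pin down $N$. Since $D_n$ is simply laced, Lemma~\ref{L2.5} gives $d_i = 1$ for all $i$, so $q_{ii} = q^2$ for a common root of unity $q$, and a direct reading of $\chi_i(h_i)$ from the datum yields $q_{ii} = \zeta_{d^2}^{2pq}$. Together with the coprimality and oddness hypotheses on $p, q, d$, this forces $N = d^2$.

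Next, for $\alpha = \alpha_1$ and for $\alpha = \alpha_i$ with $i$ even, the datum records $h_i = (\mathbbm{g}_{2i-1}\mathbbm{g}_{2i})^{pq}$, so $h_i^{d^2} = \mathbbm{g}_{2i-1}^{pqd^2}\mathbbm{g}_{2i}^{pqd^2}$. A short case analysis on $i$ modulo $4$ shows that in every admissible case $m_{2i-1} = pd$ and $m_{2i} = qd$; hence, via the embedding $g_j = \mathbbm{g}_j^{m_j}$, one obtains $h_i^{d^2} = g_{2i-1}^{qd}\, g_{2i}^{pd} \in G$, and this element is nontrivial under the standing coprimality assumptions (exactly as in the argument in Lemma~\ref{L5.3}). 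The character condition $\chi_i^{d^2} = \varepsilon$ follows by inspecting the prescribed values of $\chi_i(\mathbbm{g}_j)$: for $i = 1$ and for every even index $i$, these values lie entirely in the group of $d^2$-th roots of unity (concretely in $\{\zeta_{d^2},\, \zeta_{d^2}^{-3},\, 1\}$), so the $d^2$-th power is trivial.

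The main obstacle is simply the bookkeeping required by the stratification of the data: the orders $m_j$ split into four residue classes modulo $4$, and $\chi_i$ is given by five distinct formulas (the cases $i\in\{1,2\}$, $i = 3$, $3<i<n$ odd, $3<i<n$ even, and $i = n$ split by parity). One must therefore dispatch the several subcases of even $i$ separately, but in each the computation collapses to the pattern described above, essentially because the ``dangerous'' character value $\zeta_{d^4}$ appears in $\chi_i$ precisely when $i$ is odd and $i \geq 3$ (confirming, as a side remark, why only the indices stated in the lemma can host nonzero parameters). Once all subcases are dispatched, invoking the definition of modified root vector parameters completes the construction.
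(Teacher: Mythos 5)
Your proposal is correct and follows exactly the route the paper intends: the paper omits the proof of this lemma, stating only that it is "similar to the proofs of Lemmas \ref{L5.2} and \ref{L5.3}", and your argument is precisely that adaptation — computing $N=|\chi_i(h_i)|=d^2$, checking $h_i^{d^2}=g_{2i-1}^{qd}g_{2i}^{pd}\in G\setminus\{1\}$ and $\chi_i^{d^2}=\varepsilon$ for $i=1$ and $i$ even, and setting $\mu_\alpha=0$ elsewhere. The case bookkeeping you describe (orders $m_j$ by residue of $j$ mod $4$, and the observation that $\zeta_{d^4}$ occurs in $\chi_i$ exactly for odd $i\geq 3$) is accurate for the $D_n$ datum as defined.
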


\begin{proposition}\label{P5.7}Let $\underline{c}\in \Gamma(\mathfrak{D}),$ and $\mu$ a family of modified root vector parameters for $\mathfrak{D}$ satisfying the condition \eqref{6.34}.  Then $u(\mathfrak{D},0,\mu,\Phi_{\underline{c}})$ is a genuine quasi-Hopf algebra.
\end{proposition}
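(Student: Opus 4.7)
The plan is to mirror the proof of Proposition~\ref{P5.4}. First I would invoke Lemmas~\ref{L5.5} and~\ref{L5.6} together with Theorem~\ref{T3.4} to conclude that $u(\mathfrak{D},0,\mu,\Phi_{\underline{c}})$ is a well-defined finite-dimensional quasi-Hopf algebra of Cartan type; because $\mu$ is nontrivial under condition~\eqref{6.34}, this algebra is nonradically graded. It remains to verify genuineness, for which the natural route is to produce a genuine quasi-Hopf quotient.

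I would partition the generators of $G$ into a ``small-order'' family (those $g_j$ of order $pd$ or $qd$, namely $j\in\{1,2,3,4\}$ together with $j\equiv 3,0\pmod{4}$ and $j\geq 7$) and a ``large-order'' family (those $g_j$ of order $pd^2$ or $qd^2$, namely $j\equiv 1,2\pmod{4}$ and $j\geq 5$). Letting $S$ denote the small-order index set, I would introduce
\[
\mathcal{I}=\left\langle X_i,\;1-g_j\;\middle|\;1\leq i\leq n,\;j\in S\right\rangle
\]
and check, using the comultiplication formula~\eqref{3.18} and the antipode formula~\eqref{3.20}, that $\mathcal{I}$ is a quasi-Hopf ideal. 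The quotient $u(\mathfrak{D},0,\mu,\Phi_{\underline{c}})/\mathcal{I}$ is then the group algebra $\k G'$ with $G'=\langle g_j\mid j\notin S\rangle$, and the cocycle-restriction computation~\eqref{e5.18} from the proof of Proposition~\ref{P4.6} shows that the induced associator on $\k G'$ has the form $\sum_{e,f,g\in G'}\phi'_{\underline{c}}(e,f,g)\,1_e\otimes 1_f\otimes 1_g$ for an explicit representative $\phi'_{\underline{c}}$ of the restriction of $[\phi_{\underline{c}}]$ to $G'$.

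Lemma~\ref{L5.5} then enters decisively: every component $c_i$ of $\underline{c}$ is nonzero, and because every surviving generator of $G'$ has order divisible by $d^2$, the large-order coordinates of $\underline{c}$ remain nonzero modulo the orders of the generators of $G'$. Consequently $[\phi'_{\underline{c}}]\neq 0$ in $H^3(G',\k^{\ast})$ by Proposition~\ref{P2.16}, so $(\k G',\overline{\Phi_{\underline{c}}})$ is a genuine quasi-Hopf algebra. The proof then concludes by the categorical contradiction used at the end of Proposition~\ref{P5.4}: a fiber functor on $\operatorname{Rep}(u(\mathfrak{D},0,\mu,\Phi_{\underline{c}}))$ would restrict to one on its full tensor subcategory $\operatorname{Rep}_{\overline{\Phi_{\underline{c}}}}(\k G')$, contradicting the genuineness just established.

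The main technical obstacle will be the verification that $\mathcal{I}$ is a quasi-Hopf ideal: one must check that each summand $\Psi_l(f,g)X_l 1_f\otimes 1_g$ and $\Theta_l(f)1_f\otimes X_l$ in $\bigtriangleup(X_l)$ lands in $H\otimes\mathcal{I}+\mathcal{I}\otimes H$ once one imposes $g_j=1$ for $j\in S$. This is controlled by the divisibility constraints~\eqref{3.7}--\eqref{3.9} defining $\Gamma(\mathfrak{D})$, but requires careful bookkeeping at the distinguished $D_n$ node $h_3=(\prod_{j=1}^{8}\mathbbm{g}_j)^{pq}$, whose branching mixes small- and large-order generators. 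Once this is settled, the remaining steps proceed exactly as in the $A_n,B_n,C_n$ case.
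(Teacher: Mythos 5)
Your proposal is correct and follows essentially the same route as the paper, whose own proof of Proposition~\ref{P5.7} simply defers to the argument of Proposition~\ref{P5.4}: quotient by the quasi-Hopf ideal generated by the $X_i$ and the $1-g_j$ for the small-order generators, identify the quotient with $(\k G',\overline{\Phi_{\underline{c}}})$, and use Lemma~\ref{L5.5} plus Proposition~\ref{P2.16} and the fiber-functor restriction argument to conclude genuineness. You have correctly translated the index set to the $D_n$ order pattern ($j\in\{1,2\}$ and $j\equiv 3,0\pmod 4$ killed, $j\equiv 1,2\pmod 4$ with $j\geq 5$ surviving), which is the only adaptation the paper's ``similar to Proposition~\ref{P5.4}'' requires.
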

\begin{proof}
Similar to the proof of Proposition \ref{P5.4}.
\end{proof}

\subsection{Quasi-Hopf algebras of Cartan type $E_6$, $E_7$ and $E_8$ } In this subsection, we always assume $n=6,7\ \mathrm{or}\ 8$.
Define an abelian group $G_8=\langle g_1\rangle \times \langle g_2\rangle \times \cdots \times \langle g_{16}\rangle$ such that
\begin{eqnarray}
&&|g_{2i-1}|=pd^2,\ |g_{2i}|=qd^2\ \mathrm{for}\ i=1,3,6,8\\
&&|g_{2i-1}|=pd,\ |g_{2i}|=qd\ \mathrm{for}\ i=2,4,5,7.
\end{eqnarray}
Let $G_6=\langle g_1\rangle \times \langle g_2\rangle \times \cdots \times \langle g_{12}\rangle$ and $G_7=\langle g_1\rangle \times \langle g_2\rangle \times \cdots \times \langle g_{14}\rangle$.
Define
\begin{eqnarray}
&&h_i=(\mathbbm{g}_{2i-1}\mathbbm{g}_{2i})^{pq}\ \mathrm{for}\  i = 2,4,5,7,\\
&&h_1=(\mathbbm{g}_1\mathbbm{g}_2\mathbbm{g}_3\mathbbm{g}_4)^{pq},\\
&&h_3=(\prod_{i=3}^{10}\mathbbm{g}_i)^{pq},\\
&&h_6=(\mathbbm{g}_9\mathbbm{g}_{10}\mathbbm{g}_{11}\mathbbm{g}_{12})^{pq},\\
&&h'_6=(\mathbbm{g}_9\mathbbm{g}_{10}\mathbbm{g}_{11}\mathbbm{g}_{12}\mathbbm{g}_{13}\mathbbm{g}_{14})^{pq},\\
&&h_8=(\mathbbm{g}_{13}\mathbbm{g}_{14}\mathbbm{g}_{15}\mathbbm{g}_{16})^{pq}.
\end{eqnarray}

{\setlength{\unitlength}{1mm}
\begin{table}[t]\centering
\caption{Characters of $\mathbbm{G}_8=\langle \mathbbm{g}_1\rangle \times \cdots \times \langle \mathbbm{g}_{16}\rangle$ } \label{tab.3}
\vspace{1mm}
\begin{tabular}{r|l|l|l|l|l|l|l|l|l}
\hline
  &$\chi_1$& $\chi_2$&$\chi_3$&$\chi_4$&$\chi_5$&$\chi_6$&$\chi'_6$&$\chi_7$&$\chi_8$\\
\hline
\hline
 $\mathbbm{g}_1$ & $\zeta_{d^4}$ & $\zeta_{d^2}^{-3}$ &1 &1&1&1&1&1&1\\
\hline
  $\mathbbm{g}_2$ & $\zeta_{d^4}^{-1}$ &$\zeta_{d^2}^{-3}$ &1 &1&1&1&1&1&1\\
\hline
   $\mathbbm{g}_3$ & $\zeta_{d^2}$ &$\zeta_{d^2}$ &$\zeta_{d^2}$ &1&1&1&1&1&1\\
\hline
 $\mathbbm{g}_4$ & $\zeta_{d^2}$ &$\zeta_{d^2}$ &$\zeta_{d^2}$ &1&1&1&1&1&1\\
\hline
 $\mathbbm{g}_5$ &1&$\zeta_{d^2}^{-3}$ &$\zeta_{d^4}$ &$\zeta_{d^2}^{-3}$ &$\zeta_{d^2}^{-3}$&1&1&1&1\\
\hline
$\mathbbm{g}_6$ &1&$\zeta_{d^2}^{-3}$ &$\zeta_{d^4}^{-4d^2-1}$ &$\zeta_{d^2}^{-3}$ &$\zeta_{d^2}^{-3}$&1&1&1&1\\
\hline
$\mathbbm{g}_7$ &1&1&$\zeta_{d^2}$ &$\zeta_{d^2}$ &1&1&1&1&1\\
\hline
$\mathbbm{g}_8$ &1&1&$\zeta_{d^2}$ &$\zeta_{d^2}$ &1&1&1&1&1\\
\hline
$\mathbbm{g}_9$ &1&1&$\zeta_{d^2}$ &1 &$\zeta_{d^2}$&$\zeta_{d^2}$&$\zeta_{d^2}$&1&1\\
\hline
$\mathbbm{g}_{10}$ &1&1&$\zeta_{d^2}$ &1 &$\zeta_{d^2}$&$\zeta_{d^2}$&$\zeta_{d^2}$&1&1\\
\hline
$\mathbbm{g}_{11}$ &1&1&1&1 &$\zeta_{d^2}^{-3}$&$\zeta_{d^4}$&$\zeta_{d^4}$&$\zeta_{d^2}^{-3}$&1\\
\hline
$\mathbbm{g}_{12}$ &1&1&1&1 &$\zeta_{d^2}^{-3}$&$\zeta_{d^4}^{-1}$&$\zeta_{d^4}^{-2d^2-1}$&$\zeta_{d^2}^{-3}$&1\\
\hline
$\mathbbm{g}_{13}$ &1&1&1&1&1&1&$\zeta_{d^2}$&$\zeta_{d^2}$&$\zeta_{d^2}$\\
\hline
$\mathbbm{g}_{14}$ &1&1&1&1&1&1&$\zeta_{d^2}$&$\zeta_{d^2}$&$\zeta_{d^2}$\\
\hline
$\mathbbm{g}_{15}$ &1&1&1&1&1&1&1&$\zeta_{d^2}^{-3}$&$\zeta_{d^4}$\\
\hline
$\mathbbm{g}_{16}$ &1&1&1&1&1&1&1&$\zeta_{d^2}^{-3}$&$\zeta_{d^4}^{-1}$\\
\hline
 \end{tabular}
\end{table}}
Let $\chi_i,1\leq i\leq 8$ and $\chi'_6$ be the characters of $\mathbbm{G}$ given in Table \ref{tab.3}, then one can verify that
\begin{eqnarray*}
&&\mathfrak{D}_6=\mathfrak{D}(\mathbbm{G}_6,(h_1,\cdots,h_6),(\chi_1,\cdots,\chi_6),E_6),\\ &&\mathfrak{D}_7=\mathfrak{D}(\mathbbm{G}_7,(h_1,\cdots,h_5,h'_6,h_7),(\chi_1,\cdots,\chi_5,\chi'_6,\chi_7),E_7),\\
&&\mathfrak{D}_8=\mathfrak{D}(\mathbbm{G}_8,(h_1,\cdots,h_5,h'_6,h_7,h_8),(\chi_1,\cdots,\chi_5,\chi'_6,\chi_7,\chi_8),E_8).
\end{eqnarray*} are datums of Cartan type.  And similar as Lemma \ref{L5.2} and \ref{L5.3} we have the following:
\begin{lemma}
$\Gamma(\mathfrak{D}_n)$ is a nonempty set.  For  any $\underline{c}=(c_i,c_{jk})_{1\leq i\leq n, 1\leq j<k\leq n}$  in $\Gamma(\mathfrak{D}_n),$  we have  $c_i\neq 0$ for each $1\leq i\leq 2n.$
\end{lemma}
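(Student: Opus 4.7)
My plan is to mimic the strategy of Lemmas \ref{L5.2} and \ref{L5.3} and reduce the lemma to a finite list of linear congruences in the integer unknowns $c_i, c_{jk}$, then apply Proposition \ref{P3.15} case by case. First I would observe that the auxiliary equations \eqref{3.8}--\eqref{3.9} are always solvable, for instance by the trivial choice $c_{jk}=0$ for all $1\leq j<k\leq 2n$. Consequently the problem reduces to finding, for each $\underline{c}=(c_i,c_{jk})\in \Gamma(\mathfrak{D}_n)$, integers $c_i$ solving the congruences \eqref{3.7} attached to the characters $\chi_i$ (and $\chi'_6$ in the $E_7, E_8$ cases), and to verifying that every such $c_i$ is forced to be nonzero.

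Next I would write out \eqref{3.7} explicitly for each $\chi_i$ read off Table \ref{tab.3}. In each case the resulting congruences on a fixed index $c_l$ fall into four templates, matching the four possible orders of $g_l\in\{pd,qd,pd^2,qd^2\}$: depending on whether $l$ corresponds to an even or odd simple root and whether $l$ is of type $pd, qd, pd^2, qd^2$, one gets either
\[
c_l\, a p^2 \equiv pq \pmod{pd},\qquad c_l\, a q^2 \equiv pq \pmod{qd},
\]
or the corresponding pair of congruences modulo $pd^2$ and $qd^2$ (with some twisting coefficient like $-2d^2-1$ or $-4d^2-1$ in the entries from $\chi_3$, $\chi_6$, $\chi'_6$, $\chi_8$ where the double-$d^2$ order appears). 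The congruences from off-diagonal entries are always of the form $c_l\,\ast\, d^2 \equiv 0 \pmod{pd^2}$ or $(qd^2)$, which hold for any integer choice of $c_l$ and so impose no constraint. Using Proposition \ref{P3.15}, each of the non-trivial single-variable congruences has solutions iff $\gcd$ of the coefficient with the modulus divides $pq$; since $(p,q)=(p,d)=(q,d)=1$, this gcd is $p$ (resp.\ $q$), which divides $pq$, so existence is immediate. This delivers a nonzero $\underline{c}\in\Gamma(\mathfrak{D}_n)$.

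For the second assertion I would inspect the same congruences. In each template the right-hand side is $pq\not\equiv 0$ modulo $pd$ (resp.\ $qd$, $pd^2$, $qd^2$), because $(p,d)=(q,d)=1$ forces $pq$ to have nonzero residue. Hence the solution $c_l$ cannot be $0$. Repeating this for every $l=1,\dots,2n$ finishes the proof. The only mildly delicate point, and the place I expect to spend some care, is the pair of characters $\chi_3$, $\chi'_6$ (and their $E_6, E_7$ analogues) where the coefficient $-2d^2-1$ or $-4d^2-1$ appears: one must check that $\gcd(-2d^2-1,\, d)=1$ and $\gcd(-4d^2-1,\, d)=1$, which holds because $d$ is odd and coprime to $p,q$, so these coefficients are units modulo $d$; this ensures that the relevant gcd with $qd^2$ is exactly $q$, not a multiple of $d$, and both steps above go through unchanged.
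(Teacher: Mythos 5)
Your proposal is correct and follows essentially the same route as the paper: the paper gives no separate argument for this lemma, deferring to the proof of the corresponding statement in the $A_n,B_n,C_n$ case, which is exactly the reduction you carry out — set $c_{jk}=0$ to dispose of \eqref{3.8}--\eqref{3.9}, translate \eqref{3.7} into single-variable congruences with moduli $pd, qd, pd^2, qd^2$ read off Table \ref{tab.3}, apply Proposition \ref{P3.15} for existence, and observe that $c_l=0$ would force $pq\equiv 0$ modulo the relevant modulus, contradicting $(p,d)=(q,d)=1$. The only cosmetic remark is that the coefficients $-2d^2-1$ and $-4d^2-1$ are units modulo $d$ simply because they are $\equiv -1 \pmod d$, with no need to invoke the parity of $d$.
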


\begin{lemma}\label{L5.9}
There exists nonzero modified root vector parameters $\mu$ for $\mathfrak{D}_n$ satisfying the conditions: if $n=6,$ $\mu_{\alpha}\neq 0$ if and only if $\alpha=\alpha_i$ for $i=2,4\ \mathrm{or}\ 5$; if $n=7\ \mathrm{or}\ 8,$ $\mu_{\alpha}\neq 0$ if and only if $\alpha=\alpha_i$ for $i=2,4,5\ \mathrm{or}\ 7$.
\end{lemma}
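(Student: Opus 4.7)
The plan is to reduce the existence of $\mu$ to a simultaneous check of three arithmetic conditions at each simple root, just as in Lemma \ref{L5.3}. Since the Cartan matrices $E_6$, $E_7$, $E_8$ are connected, the index set $\Omega$ of connected components is a singleton, so there is a single integer $N_J = N$ to compute. I first verify that $q_{ii} = \chi_i(h_i)$ has common order $d^2$ for each $1 \le i \le n$, giving $N = d^2$; this is a short direct computation using the tables of characters and the definitions of the $h_i$.

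Next, I would deal with the ``if'' direction: for each $i$ in the prescribed index set (\,$\{2,4,5\}$ for $n=6$, \,$\{2,4,5,7\}$ for $n=7,8$), show that $\alpha = \alpha_i$ admits a nonzero modified root vector parameter. By Definition 3.3 and Theorem \ref{T2.13} this amounts to verifying
\[
h_i^{d^2} \in G, \qquad h_i^{d^2} \neq 1, \qquad \chi_i^{d^2} = \varepsilon.
\]
For each such $i$ one has $h_i = (\mathbbm{g}_{2i-1}\mathbbm{g}_{2i})^{pq}$ with $|\mathbbm{g}_{2i-1}| = (pd)^2$ and $|\mathbbm{g}_{2i}| = (qd)^2$, so $h_i^{d^2} = g_{2i-1}^{qd} g_{2i}^{pd}$ lies in $G$ and is nontrivial because $(p,q)=1$ and the two generators are independent. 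The identity $\chi_i^{d^2} = \varepsilon$ follows from the fact that every value of $\chi_i$ in Table \ref{tab.3} is a $d^2$-th root of unity for $i \in \{2,4,5,7\}$.

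For the ``only if'' direction I would show that for each of the remaining simple roots $\alpha_i$ (that is, $i=1,3,6$ in type $E_6$, and $i=1,3,6,8$ in types $E_7$ and $E_8$) the character $\chi_i^{d^2}$ is nontrivial, so any family of root vector parameters must assign $\mu_{\alpha_i}=0$ by Theorem \ref{T2.13}. Inspection of Table \ref{tab.3} shows that each such $\chi_i$ has at least one value of the form $\zeta_{d^4}^k$ with $k$ coprime to $d^2$; raising to the $d^2$-th power gives $\zeta_{d^2}^k \neq 1$. Finally, for each non-simple positive root $\alpha \in R^+$, I simply set $\mu_\alpha = 0$, which trivially satisfies the modified condition of Definition 3.3.

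No genuine obstacle is expected here: the argument is a bookkeeping check, entirely parallel to the proofs of Lemma \ref{L5.3} and Lemma \ref{L5.6}. The only step that requires a little care is confirming that $N = d^2$ uniformly across the three Cartan types; the dangerous-looking exponents $-4d^2-1$ and $-2d^2-1$ appearing in the definitions of $\chi_3$ and $\chi_6'$ are designed precisely so that the Cartan-type compatibility $q_{ij}q_{ji} = q_{ii}^{a_{ij}}$ holds while leaving $\chi_i^{d^2}$ nontrivial on the associated generators, which is why the indices $1,3,6,8$ are excluded from the ``nonzero'' set.
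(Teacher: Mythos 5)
Your proof is correct and follows essentially the same route the paper intends: the paper states Lemma \ref{L5.9} without an explicit argument, deferring to the pattern of Lemma \ref{L5.3}, and your write-up is exactly that pattern carried out for the $E_n$ data ($N=d^2$, then the three conditions $h_i^{d^2}\in G$, $h_i^{d^2}\neq 1$, $\chi_i^{d^2}=\varepsilon$ at the designated simple roots, with all other parameters set to zero). Your extra observation that $\chi_i^{d^2}\neq\varepsilon$ forces $\mu_{\alpha_i}=0$ at the excluded indices is a harmless strengthening beyond what the paper checks.
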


\begin{proposition}\label{P5.10}
Let $\underline{c}\in \Gamma(\mathfrak{D}_n),$ and $\mu$ a family of modified root vector parameters for $\Gamma(\mathfrak{D}_n)$ satisfying the conditions of Lemma \ref{L5.9}.  Then $u(\mathfrak{D}_n,0,\mu,\Phi_{\underline{c}})$ is a nonradically graded genuine quasi-Hopf algebra.
\end{proposition}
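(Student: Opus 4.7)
The plan is to follow the template of the proofs of Propositions \ref{P5.4} and \ref{P5.7}, adapting the construction of an auxiliary quasi-Hopf ideal to the three exceptional types $E_6, E_7, E_8$. The statement has two parts: that $u(\mathfrak{D}_n,0,\mu,\Phi_{\underline{c}})$ is a finite-dimensional nonradically graded quasi-Hopf algebra, and that it is genuine.

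For the first part, the existence of a finite-dimensional quasi-Hopf algebra structure is immediate from Theorem \ref{T3.4}, once we invoke the previous two lemmas: Lemma \ref{L5.9} supplies a modified family $\mu$, the hypothesis $\underline{c}\in\Gamma(\mathfrak{D}_n)$ gives the required cocycle data, and $\lambda=0$ is admissible since $E_n$ is connected. The nonradically graded property follows because, by Lemma \ref{L5.9}, at least one $\mu_{\alpha_i}$ is nonzero, and hence the root vector relation $X_{\alpha_i}^{N_J}=\mathbf{u}_{\alpha_i}(\mu)$ has a nontrivial right-hand side; consequently the ideal generated by $X_1,\dots,X_\theta$ is strictly larger than the candidate radical in the radically graded case, forcing the algebra not to be radically graded.

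The main work lies in establishing that $u(\mathfrak{D}_n,0,\mu,\Phi_{\underline{c}})$ is genuine, and for this I would follow the argument of Proposition \ref{P5.4} verbatim. The idea is to exhibit a quasi-Hopf quotient whose genuineness is transparent, since any twist equivalence of the big algebra with a Hopf algebra would descend to the quotient. Concretely, I would let $\mathcal{I}$ be the ideal generated by all the $X_i$, together with $1-g_j$ for every index $j$ whose generator has order $pd$ or $qd$ (i.e., the indices labeled $2,4,5,7$ and their partners, corresponding to the simple roots where $\mu_{\alpha_i}\neq 0$ is excluded). One then checks that $\mathcal{I}$ is a quasi-Hopf ideal: it is closed under $\bigtriangleup$ because the comultiplication formula \eqref{3.18} sends each $X_i$ into $H\otimes \mathcal{I} + \mathcal{I}\otimes H$, and the counit and antipode obviously preserve $\mathcal{I}$. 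The quotient is then the group algebra $\k G'$ for the remaining subgroup $G'$ generated by the surviving $g_j$'s of order $pd^2$ or $qd^2$, equipped with the restricted associator $\overline{\Phi_{\underline{c}}}=\sum_{e,f,g\in G'}\phi_{\underline{c}}|_{G'}(e,f,g)\,1_e\otimes 1_f\otimes 1_g$.

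The crux is to verify that $\phi_{\underline{c}}|_{G'}$ is not a 3-coboundary on $G'$. This is exactly the step that used Lemma \ref{L5.2} in the classical series, and here the analogous fact should follow from the lemma preceding Lemma \ref{L5.9}: the conclusion that every $c_i\neq 0$ in any $\underline{c}\in\Gamma(\mathfrak{D}_n)$ together with Proposition \ref{P2.16} guarantees that the restricted representative remains a nontrivial cohomology class (for the subgroup $G'$ has components of order $pd^2$ and $qd^2$, so the factors $\zeta_{\mathbbm{m}_l}^{c_l\cdots}$ do not trivialize after restriction). Consequently $(\k G',\overline{\Phi_{\underline{c}}})$ is a genuine quasi-Hopf algebra. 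Finally, if $u(\mathfrak{D}_n,0,\mu,\Phi_{\underline{c}})$ were twist-equivalent to a Hopf algebra via some twist $J$, then the image $\overline{J}$ in the quotient would trivialize $\overline{\Phi_{\underline{c}}}$, contradicting the genuineness just established. The main obstacle will be the bookkeeping verification that the restricted 3-cocycle on $G'$ remains nontrivial uniformly across $n=6,7,8$ and all admissible choices of $\underline{c}$; once that is settled, the rest is a routine adaptation of Proposition \ref{P5.4}.
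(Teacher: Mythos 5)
Your proposal is correct and follows exactly the route the paper intends: the published proof of Proposition \ref{P5.10} consists solely of the remark ``Similar to the proof of Proposition \ref{P5.4}'', and your argument is precisely that adaptation (kill the $X_i$ and the generators of order $pd$ or $qd$, identify the quotient with $\k G'$ carrying the restricted associator, use the nonvanishing of the $c_i$ to see the restricted $3$-cocycle is not a coboundary, and descend a hypothetical trivializing twist to reach a contradiction). One small slip: the indices $2,4,5,7$ whose generators you quotient out are exactly those where Lemma \ref{L5.9} \emph{requires} $\mu_{\alpha_i}\neq 0$ (not where it is excluded) --- which is the point, since killing those generators sends $h_{\alpha_i}^{N}$ to $1$ and hence trivializes the nontrivial root vector relations in the quotient.
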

\begin{proof}
Similar to the proof of Proposition \ref{P5.4}.
\end{proof}

\subsection{Quasi-Hopf algebras of Cartan type $F_4$}
 Let $G=\langle g_1\rangle \times \cdots\times \langle g_8\rangle$ such that
\begin{equation}
|g_i|=\left\{
        \begin{array}{ll}
          pd, & \hbox{$i=1,5$;} \\
          qd, & \hbox{$i=2,6$;} \\
          pd^2, & \hbox{$i=3,7$;} \\
          qd^2, & \hbox{$i=4,8$.}
        \end{array}
      \right.
\end{equation}
Denote by $(h_i)_{1\leq i\leq 4}$ a family of elements in $\mathbbm{G}$ through
\begin{eqnarray}
&&h_1=(\mathbbm{g}_1\mathbbm{g}_2)^{pq},\ \  h_2=(\mathbbm{g}_1\mathbbm{g}_2\mathbbm{g}_3\mathbbm{g}_4\mathbbm{g}_5\mathbbm{g}_6)^{pq},\\
&&h_3=(\mathbbm{g}_5\mathbbm{g}_6)^{pq},\ \  h_4=(\mathbbm{g}_5\mathbbm{g}_6\mathbbm{g}_7\mathbbm{g}_8)^{pq}.\notag
\end{eqnarray}

{\setlength{\unitlength}{1mm}
\begin{table}[t]\centering
\caption{Characters of $\mathbbm{G}=\langle \mathbbm{g}_1\rangle \times \cdots \times \langle \mathbbm{g}_8\rangle $ } \label{tab.4}
\vspace{1mm}
\begin{tabular}{r|l|l|l|l|l|l|l|l}
\hline
  &$\mathbbm{g}_1$& $\mathbbm{g}_2$&$\mathbbm{g}_3$&$\mathbbm{g}_4$&$\mathbbm{g}_5$&$\mathbbm{g}_6$&$\mathbbm{g}_7$&$\mathbbm{g}_8$\\
\hline
\hline
 $\chi_1$ & $\zeta_{d^2}$ & $\zeta_{d^2}$ &$\zeta_{d^2}^{-3}$ &$\zeta_{d^2}^{-3}$ &1&1&1&1\\
\hline
  $\chi_2$ & $\zeta_{d^2}$& $\zeta_{d^2}$ &$\zeta_{d^4}$&$\zeta_{d^4}^{-4d^2-1}$&$\zeta_{d^2}$&$\zeta_{d^2}$&1&1\\
\hline
   $\chi_3$ & 1&1&$\zeta_{d^2}^{-6}$ &$\zeta_{d^2}^{-6}$&$\zeta_{d^2}^2$&$\zeta_{d^2}^2$&$\zeta_{d^2}^{-6}$&$\zeta_{d^2}^{-6}$\\
\hline
 $\chi_4$ & 1 &1&1&1&$\zeta_{d^2}^2$&$\zeta_{d^2}^2$&$\zeta_{d^4}$&$\zeta_{d^4}^{-1}$\\
\hline
 \end{tabular}
\end{table}}
Let $(\chi_i)_{1\leq i\leq 4}$ be the characters of $\mathbbm{G}$ given in Table \ref{tab.4}, then one can easily verify that $$\mathfrak{D}=\mathfrak{D}(\mathbbm{G},(h_i)_{1\leq i\leq 4},(\chi_i)_{1\leq i\leq 4},F_4)$$ is a datum of Cartan type. Similar to Lemma \ref{L5.2}-\ref{L5.3}, we have the following two lemmas.

\begin{lemma}\label{L5.10}
$\Gamma(\mathfrak{D})$ is a nonempty set. Let $\underline{c}=(c_i,c_{jk})_{1\leq i\leq 8, 1\leq j<k\leq 8}$ be an element in $\Gamma(\mathfrak{D})$.  Then we have $c_i\neq 0$ for $1\leq i\leq 8.$
\end{lemma}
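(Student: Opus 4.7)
The plan is to mirror the argument of Lemma~\ref{L5.2}, adapted to the $F_4$ datum. First I would observe that Equations \eqref{3.8}--\eqref{3.9} are solvable by the trivial choice $c_{jk}=0$ for all $1\leq j<k\leq 8$, so attention reduces entirely to Equations \eqref{3.7}, viewed as a system of congruences in the unknowns $c_1,\ldots,c_8$.

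Next I would read off $s_{ij}$ from the definitions of $h_1,\ldots,h_4$ and $r_{ij}$ from Table~\ref{tab.4}, and substitute into \eqref{3.7}. For each fixed $j\in\{1,\ldots,8\}$, most of the four equations (indexed by $i\in\{1,2,3,4\}$) are trivially $0\equiv 0$, because many characters $\chi_i(\mathbbm{g}_j)$ equal $1$ and many exponents $s_{ij}$ vanish. The nontrivial congruences that remain take the form $c_j\cdot\alpha\equiv pq\pmod{m_j}$, where $m_j\in\{pd,qd,pd^2,qd^2\}$ and $\alpha$ is a small multiple of $p^2$ or $q^2$ (the multipliers coming from the $F_4$ Cartan coefficients together with the entries of Table~\ref{tab.4}). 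A handful of auxiliary equations of the shape $c_j\cdot(\text{multiple of }d^2)\equiv 0\pmod{pd^2}$ or $\pmod{qd^2}$ are automatically satisfied. The $\zeta_{d^4}$-exponents of the form $-2d^2-1$ reduce cleanly modulo $d^2$ after rewriting $\zeta_{d^4}^{d^2}=\zeta_{d^2}$.

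For the surviving equations, Proposition~\ref{P3.15} applies: divisibility holds because $(p,q)=(p,d)=(q,d)=1$ and each $\alpha$ shares only powers of $p$ (respectively $q$) with $m_j$, so $\gcd(\alpha,m_j)=p$ (respectively $q$), which divides $pq$. This establishes $\Gamma(\mathfrak{D})\neq\emptyset$. To conclude that every element of $\Gamma(\mathfrak{D})$ has nonzero coordinates $c_i$, I would observe that reducing the core congruence modulo $d$ (or $d^2$) yields $c_j\cdot k\equiv q\pmod{d}$ or $c_j\cdot k\equiv p\pmod{d^2}$ with $k$ a unit; if $c_j=0$, this would force $q\equiv 0\pmod{d}$ or $p\equiv 0\pmod{d^2}$, contradicting the coprimality hypotheses.

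The main obstacle is purely bookkeeping rather than conceptual: there are eight unknowns and thirty-two instances of \eqref{3.7} to tabulate, complicated slightly by the factor $-6$ in $\chi_3$ (which reflects the nonsimply-laced edge $a_{23}=-2$ of $F_4$) and by the collapse of $\zeta_{d^4}$-exponents modulo $d^2$. No new ideas beyond those already used in the proofs of Lemma~\ref{L5.2} and Lemma~\ref{L5.5} are required.
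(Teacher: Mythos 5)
Your overall strategy is the one the paper intends: the paper gives no proof of this lemma, saying only that it is ``similar to Lemma \ref{L5.2}'', and your plan of setting $c_{jk}=0$, reducing to the congruences \eqref{3.7}, and invoking Proposition \ref{P3.15} is exactly that argument. However, there is one substantive step that your sketch elides, and it is where the real content (and, with the data as printed, the real trouble) lies. For a fixed index $j$ the system \eqref{3.7} consists of \emph{four} congruences, one for each $i\in\{1,2,3,4\}$, and when several of $h_1,\dots,h_4$ involve $\mathbbm{g}_j$ with a nontrivial character value these congruences need not collapse to a single relation $c_j\alpha\equiv pq \pmod{m_j}$: they must be checked for mutual compatibility. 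In Lemma \ref{L5.2} the datum is engineered so that all nontrivial congruences at a given $j$ carry the same coefficient; your proposal tacitly assumes the same happens here, but does not verify it.

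With Table \ref{tab.4} as printed the check in fact fails at $j=5$ (and symmetrically at $j=6$): since $h_2$, $h_3$, $h_4$ all contain $\mathbbm{g}_5^{pq}$, one gets $pq\equiv c_5p^2 \pmod{pd}$ from $\chi_2(\mathbbm{g}_5)=\zeta_{d^2}$ but $pq\equiv 2c_5p^2\pmod{pd}$ from $\chi_3(\mathbbm{g}_5)=\chi_4(\mathbbm{g}_5)=\zeta_{d^2}^2$; subtracting forces $d\mid c_5p$, hence $d\mid c_5$ and then $d\mid q$, contradicting $(q,d)=1$ whenever $d>1$, so $\Gamma(\mathfrak{D})$ would be empty. (A related symptom: with the printed table $\chi_2(h_2)=1$, which is incompatible with $q_{22}$ having order $d^2$.) The datum presumably carries a typo --- taking $\chi_2(\mathbbm{g}_5)=\chi_2(\mathbbm{g}_6)=\zeta_{d^2}^2$ repairs both defects --- but in any case your proof must include the explicit per-$j$ consistency check across the four characters; without it, the appeal to Proposition \ref{P3.15} establishes solvability of each congruence separately but not of the full system.
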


\begin{lemma}\label{L5.11}
There exists a family of modified root vector parameter $\mu$ for $\Gamma(\mathfrak{D})$ satisfying the condition: $\mu_{\alpha}$ is nonzero  if and only if  $\alpha=\alpha_1\ \mathrm{or}\ \alpha_3$.
\end{lemma}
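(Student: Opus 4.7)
The plan is to exhibit $\mu$ by setting $\mu_{\alpha_1}$ and $\mu_{\alpha_3}$ to be arbitrary nonzero scalars and $\mu_\alpha = 0$ for every other $\alpha \in R^+$. The zero choice is always admissible, so the only substantive work is verifying, at $\alpha \in \{\alpha_1, \alpha_3\}$, the three conditions required for a nonzero value to be allowed: $h_\alpha^{N_J} \in G$ (the modified condition \eqref{3.13}), together with $h_\alpha^{N_J} \neq 1$ and $\chi_\alpha^{N_J} = \varepsilon$ (the standard root vector parameter conditions appearing in Proposition \ref{P2.9}).

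Since $F_4$ is connected, the set $\Omega$ has a single element $J = \{1,2,3,4\}$ and a single integer $N_J$ which, by Lemma \ref{L2.5}, equals the common order of the $\chi_i(h_i)$. A direct computation yields $\chi_1(h_1) = (\chi_1(\mathbbm{g}_1)\chi_1(\mathbbm{g}_2))^{pq} = \zeta_{d^2}^{2pq}$, which has order $d^2$ because $d$ is odd and coprime to $pq$; thus $N_J = d^2$. With this in hand, I check the three conditions at $\alpha_1$: using the identification $\iota(g_i) = \mathbbm{g}_i^{m_i}$ of \eqref{3.3}, the power $h_1^{d^2} = \mathbbm{g}_1^{pqd^2}\mathbbm{g}_2^{pqd^2}$ rewrites as $g_1^{qd}g_2^{pd} \in G$, which is nontrivial (tacitly under the usual nondegeneracy that $p,q$ are not both equal to $1$). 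Every value of $\chi_1$ listed in Table \ref{tab.4} is a $d^2$-th root of unity, so $\chi_1^{d^2} = \varepsilon$. The calculation for $\alpha_3$ is entirely parallel: $h_3^{d^2}$ rewrites as $g_5^{qd}g_6^{pd} \in G \setminus \{1\}$, and the values of $\chi_3$ are again $d^2$-th roots of unity, whence $\chi_3^{d^2} = \varepsilon$.

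The only anticipated difficulty is the bookkeeping of exponents when translating powers of $\mathbbm{g}_i$ back to $G$ through the embedding $G \hookrightarrow \mathbbm{G}$, but the structure of the argument is directly parallel to the proofs of Lemma \ref{L5.3} (types $A_n, B_n, C_n$) and Lemma \ref{L5.6} (type $D_n$) in the previous subsections, so no conceptual obstruction is expected.
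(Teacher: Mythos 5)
Your proposal is correct and follows exactly the route the paper intends: the paper omits the proof of Lemma \ref{L5.11}, stating it is "similar to Lemma \ref{L5.2}--\ref{L5.3}", and the proof of Lemma \ref{L5.3} is precisely your argument (compute $N_J=d^2$, check $h_\alpha^{N_J}\in G$, $h_\alpha^{N_J}\neq 1$, $\chi_\alpha^{N_J}=\varepsilon$ at the designated simple roots, set all other $\mu_\alpha=0$). Your parenthetical caveat about $p,q$ not both being $1$ is a point the paper glosses over but does not change the substance.
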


 \begin{proposition}\label{P5.13}
Let $\underline{c}\in \Gamma(\mathfrak{D}),$  and $\mu$ a family of modified root vector parameters satisfying the condition of  Lemma \ref{L5.11}. Then $u(\mathfrak{D},0,\mu,\Phi_{\underline{c}})$ is a nonradically graded genuine quasi-Hopf algebra.
\end{proposition}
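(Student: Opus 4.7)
The plan is to follow the same template used in Propositions \ref{P5.4}, \ref{P5.7}, and \ref{P5.10}: first invoke the general machinery to see that $u(\mathfrak{D},0,\mu,\Phi_{\underline{c}})$ is indeed a finite-dimensional quasi-Hopf algebra, and then exhibit a quasi-Hopf quotient that is manifestly genuine, so that the original algebra cannot be twist-equivalent to a Hopf algebra. By Lemma \ref{L5.10}, $\Gamma(\mathfrak{D})$ contains nonzero elements, so Theorem \ref{T3.4} applies and produces $u(\mathfrak{D},0,\mu,\Phi_{\underline{c}})$ as a finite-dimensional quasi-Hopf algebra. Lemma \ref{L5.11} guarantees the existence of a modified root-vector parameter $\mu$ with $\mu_{\alpha_1}\neq 0$ or $\mu_{\alpha_3}\neq 0$; the corresponding root vector relations $X_{\alpha_1}^{d^2}=\mathbf{u}_{\alpha_1}(\mu)$ and $X_{\alpha_3}^{d^2}=\mathbf{u}_{\alpha_3}(\mu)$ are then nontrivial, so the algebra fails to be radically graded.

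Next, I would construct the appropriate quasi-Hopf quotient. Following the pattern of Proposition \ref{P5.4}, let $\mathcal{I}$ be the ideal generated by
$$\{X_1,X_2,X_3,X_4\}\cup\{1-g_j\mid j=1,2,5,6\}.$$
The generators $g_1,g_2,g_5,g_6$ are precisely those whose orders are $pd$ or $qd$ (i.e.\ divide $pqd$), while $g_3,g_4,g_7,g_8$ have orders $pd^2,qd^2$. Using \eqref{e3.20}--\eqref{e3.23} and the comultiplication/antipode formulas \eqref{3.18}--\eqref{3.20}, one verifies that $\mathcal{I}$ is stable under $\bigtriangleup$, annihilated by $\varepsilon$, and stable under $\mathcal{S}$, hence a quasi-Hopf ideal. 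The quotient is $\k G'$ with $G'=\langle g_3\rangle\times\langle g_4\rangle\times\langle g_7\rangle\times\langle g_8\rangle$, and the inherited associator is
$$\overline{\Phi_{\underline{c}}}=\sum_{e,f,g\in G'}\phi_{\underline{c}}|_{G'}(e,f,g)\,1_e\otimes 1_f\otimes 1_g,$$
exactly as computed in the proof of Proposition \ref{P4.6} via the idempotent identity $\overline{1_f}=1_{\varphi(f)}$ for $f$ lying in the subgroup $\langle g_3,g_4,g_7,g_8\rangle$ (and $\overline{1_f}=0$ otherwise).

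The key step is to show that $\phi_{\underline{c}}|_{G'}$ is not a $3$-coboundary on $G'$. By Proposition \ref{P2.16} and the explicit formula for $\phi_{\underline{c}}$, the restriction is cohomologous to the representative associated to the parameters $(c_3,c_4,c_7,c_8,\{c_{jk}\})$. By Lemma \ref{L5.10}, each $c_i$ with $i\in\{3,4,7,8\}$ is nonzero, so this restricted $3$-cocycle is not cohomologically trivial, and $(\k G',\overline{\Phi_{\underline{c}}})$ is a genuine quasi-Hopf algebra. The only subtlety I expect here is to verify that the reduced indices $c_3,c_4,c_7,c_8$ remain nonzero modulo the divisors of the orders of $g_3,g_4,g_7,g_8$ that enter the cocycle formula, which follows from the coprimality conditions $(p,q)=(p,d)=(q,d)=1$ exactly as in the proof of Lemma \ref{L5.2}.

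Finally, if $u(\mathfrak{D},0,\mu,\Phi_{\underline{c}})$ were twist-equivalent to a Hopf algebra, then its representation category would admit a fiber functor. The full tensor subcategory of representations annihilated by $\mathcal{I}$ is tensor equivalent to $\Rep(\k G',\overline{\Phi_{\underline{c}}})$, which would then also admit a fiber functor; by Theorem \ref{T2.3} this would force $(\k G',\overline{\Phi_{\underline{c}}})$ to be twist-equivalent to a Hopf algebra, contradicting the previous paragraph. Hence $u(\mathfrak{D},0,\mu,\Phi_{\underline{c}})$ is genuine. The main obstacle is the cohomological nontriviality check on $G'$; all other steps are routine verifications using the explicit formulas of Section~3.
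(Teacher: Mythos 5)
Your proposal is correct and follows exactly the route the paper intends: the paper's own proof is simply ``Similar to the proof of Proposition \ref{P5.4},'' and you have carried out that analogy faithfully, with the right choice of quasi-Hopf ideal (killing $X_1,\dots,X_4$ and $g_1,g_2,g_5,g_6$, which is consistent with the nontrivial root vector relations at $\alpha_1,\alpha_3$ since $h_1^{d^2}=g_1^{qd}g_2^{pd}$ and $h_3^{d^2}=g_5^{qd}g_6^{pd}$) and the correct appeal to Lemma \ref{L5.10} for the cohomological nontriviality of $\phi_{\underline{c}}|_{G'}$.
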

\begin{proof}
Similar to the proof of Proposition \ref{P5.4}.
\end{proof}

\vskip 5pt

\end{document}